\documentclass[12pt]{amsart}
\usepackage{amssymb}  
\usepackage{latexsym} 
\usepackage[all]{xy}
\usepackage{url}

\newcommand{\A}{{\mathcal A}}
\newcommand{\adj}{\operatorname{Adj}}
\newcommand{\bG}{{\mathbf G}}
\newcommand{\bH}{{\mathbf H}}
\newcommand{\bS}{{\mathbf S}}
\newcommand{\bTheta}{{\mathbf \Theta}}
\newcommand{\bu}{{\mathbf u}}
\newcommand{\Br}{\operatorname{Br}}
\newcommand{\C}{{\mathcal C}}
\newcommand{\CC}{{\mathbb C}}
\newcommand{\CT}{{\operatorname{CT}}}
\newcommand{\dKum}{\widetilde{\K}}
\newcommand{\divisor}{\operatorname{div}}
\newcommand{\Div}{\operatorname{Div}}
\newcommand{\eps}{{\varepsilon}}
\newcommand{\F}{{\mathbb F}}
\newcommand{\Gal}{\operatorname{Gal}}
\newcommand{\GL}{\operatorname{GL}}
\newcommand{\im}{\operatorname{Im}}
\newcommand{\inv}{\operatorname{inv}}
\newcommand{\isom}{\cong}
\newcommand{\J}{{\mathcal J}}
\newcommand{\Jac}{\operatorname{Jac}}
\newcommand{\K}{{\mathcal K}}
\newcommand{\kbar}{\overline{k}}
\newcommand{\la}{\lambda}
\newcommand{\Map}{\operatorname{Map}}
\newcommand{\Ok}{{{\mathcal O}_k}}
\newcommand{\Pf}{\operatorname{Pf}}
\newcommand{\PGL}{\operatorname{PGL}}
\newcommand{\PO}{\operatorname{PO}}
\newcommand{\PSO}{\operatorname{PSO}}
\newcommand{\Pic}{\operatorname{Pic}}
\newcommand{\PP}{{\mathbb P}}
\newcommand{\pp}{{\mathfrak p}}
\newcommand{\Q}{{\mathbb Q}}
\newcommand{\Qbar}{{\overline{\mathbb Q}}}
\newcommand{\ra}{\longrightarrow}
\newcommand{\rank}{{\operatorname{rank}}}
\newcommand{\Res}{\operatorname{Res}}
\newcommand{\Sha}{\mbox{\wncyr Sh}}

\newcommand{\tr}{\operatorname{Tr}}
\newcommand{\vv}{{\mathbf v}}
\newcommand{\x}{{\mathbf x}}
\newcommand{\y}{{\mathbf y}}
\newcommand{\z}{{\mathbf z}}
\newcommand{\Z}{{\mathbb Z}}

\newfont{\wncyr}{wncyr10 at 12pt}
\newfont{\wncyrten}{wncyr10 at 10pt}

\newenvironment{ProofOf}[1]{\par\noindent{\em Proof of #1.}}%
                       {\hspace*{\fill}\nobreak$\Box$\par\medskip}

\newtheorem{Proposition}{Proposition}[section]
\newtheorem{Theorem}[Proposition]{Theorem}

\newtheorem{Lemma}[Proposition]{Lemma}

\theoremstyle{definition}

\newtheorem{Remark}[Proposition]{Remark}
\newtheorem{Definition}[Proposition]{Definition}
\newtheorem{Example}[Proposition]{Example}

\addtolength{\hoffset}{-1cm}
\addtolength{\textwidth}{2cm}

\begin{document}
\date{9th June 2023}
\title[Computing the Cassels-Tate pairing]%
{Computing the Cassels-Tate pairing \\
on the $2$-Selmer group of a genus 2 Jacobian}

\author{Tom~Fisher}
\address{University of Cambridge,
          DPMMS, Centre for Mathematical Sciences,
          Wilberforce Road, Cambridge CB3 0WB, UK}
\email{T.A.Fisher@dpmms.cam.ac.uk}

\author{Jiali~Yan}
\email{jialiyan.lele@gmail.com}
        

\renewcommand{\baselinestretch}{1.1}
\renewcommand{\arraystretch}{1.3}

\renewcommand{\theenumi}{\roman{enumi}}

\begin{abstract}
  We describe a method for computing the Cassels-Tate pairing on the
  $2$-Selmer group of the Jacobian of a genus $2$ curve.  This can be
  used to improve the upper bound coming from $2$-descent for the rank
  of the group of rational points on the Jacobian.  Our method remains
  practical regardless of the Galois action on the Weierstrass points
  of the genus $2$ curve. It does however depend on being able to find
  a rational point on a certain twisted Kummer surface. The latter
  does not appear to be a severe restriction in practice. In
  particular, we have used our method to unconditionally determine the
  ranks of all genus~$2$ Jacobians in the $L$-functions and modular
  forms database (LMFDB).
\end{abstract}

\maketitle

\enlargethispage{1ex}
\vspace{-2ex}

\tableofcontents

\section{Introduction}

Let $\C$ be a smooth projective curve defined over a number field $k$.
Its Jacobian $\J = \Jac(\C)$ is a principally polarised abelian
variety, also defined over $k$.  We are interested in the group
$\J(k)$ of $k$-rational points on $\J$. By the Mordell-Weil theorem
this is a finitely generated abelian group. We may therefore write
\[\J(k) \isom \Delta \times \Z^r\] where $\Delta$ is a finite abelian
group, called the torsion subgroup, and $r$ is a non-negative integer,
called the rank.

We may compute an upper bound for the rank by carrying out a
$2$-descent.  Specifically, there is a short exact sequence of
$\F_2$-vector spaces
\begin{equation}
\label{exseq-2desc}
0 \to \J(k)/2 \J(k) \to S^{(2)}(\J/k) \to \Sha(\J/k)[2] \to 0
\end{equation}
where the $2$-Selmer group $S^{(2)}(\J/k)$ is finite and effectively
computable. Computing $S^{(2)}(\J/k)$ gives an upper bound for the
rank, but this will only be sharp if the Tate-Shafarevich group
$\Sha(\J/k)$ contains no elements of order $2$.

In principle, the upper bound for the rank can be improved by carrying
out a $4$-descent, that is, by computing the $4$-Selmer group
$S^{(4)}(\J/k)$. This sits in a commutative diagram with exact rows
\begin{equation}
  \label{commdiag1}
 \begin{aligned}
 \xymatrix{ \J(k) \ar[r]^{\times 4} \ar[d]_{\times 2} & \J(k) \ar[r] \ar@{=}[d]
   & S^{(4)}(\J/k) \ar[d]_{\alpha} \ar[r] & \Sha(\J/k)[4]
   \ar[d]_{\times 2} \ar[r] & 0 \\
    \J(k) \ar[r]^{\times 2} & \J(k) \ar[r] & S^{(2)}(\J/k)
  \ar[r] & \Sha(\J/k)[2] \ar[r] & 0 } 
 \end{aligned}
\end{equation}
from which we read off the inclusions
$\J(k)/2 \J(k) \subset \im(\alpha) \subset S^{(2)}(\J/k)$.  However,
by results of Cassels \cite{CasselsIV} in the case of elliptic curves,
generalised to abelian varieties by Tate \cite{Tate}, \cite{Milne},
there is a symmetric pairing of $\F_2$-vector spaces
\[ \langle~,~\rangle_{\CT} : S^{(2)}(\J/k) \times S^{(2)}(\J/k) \to
  \F_2 \] whose kernel is $\im(\alpha)$. Thus a $2$-descent followed
by computing the Cassels-Tate pairing $\langle~,~\rangle_{\CT}$ gives
the same information (as regards bounding the rank) as a $4$-descent,
but with potentially very much less effort.

Cassels~\cite{Ca98} gave a practical method for computing the pairing
in the case of elliptic curves (that is, when $\C$ has genus $1$). His
method involves solving a conic over the field of definition of each
$2$-torsion point of the elliptic curve. This was subsequently
improved by Donnelly~\cite{Donnelly}, who found a method that only
requires that we solve conics over the ground field.

The second author \cite{Jiali-rat2tors} generalised the method of
Cassels to curves of genus~2. However the analogue of solving a conic
is now the problem of finding an explicit isomorphism between an
algebra given by structure constants and the algebra of $4 \times 4$
matrices.  So even taking $k = \Q$ the method is only practical if the
genus~$2$ curve has all its Weierstrass points defined over $\Q$, or
at least over a very small number field.  In this paper we develop a
new method that avoids these restrictions.  To do this we generalise
the method of Donnelly~\cite{Donnelly}, or more specifically the
variant in \cite{bq-ctp}, to curves of genus~$2$.

Although we are able to compute the pairing for curves of genus $2$
regardless of the Galois action on the Weierstrass points, our method
depends on being able to find rational points on certain twisted
Kummer surfaces. In general such points need not exist, as shown by
Logan and van Luijk~\cite{Logan-vanLuijk}. However this does not
appear to be a severe restriction in practice. Indeed we have
implemented our method in Magma \cite{magma}, and in running our
program on thousands of random examples, and on every genus 2 Jacobian
in the {\em L-functions and modular forms database} \cite{lmfdb} with
non-trivial analytic order of $\Sha$, we are yet to find an example
where a lack of rational points on the twisted Kummer surfaces
prevents us from computing the pairing.

There are also good theoretical reasons why the twisted Kummer
surfaces should typically have rational points. The surfaces of
interest are always everywhere locally soluble, and so have rational
points if they satisfy the Hasse principle. However Kummer surfaces
are examples of $K3$ surfaces, and for these it is conjectured that
all counterexamples to the Hasse principle are explained by the
Brauer-Manin obstruction.  Moreover there is work of Harpaz and
Skorobogatov~\cite{HS}, and more recently Morgan~\cite{Morgan},
showing that if we assume the finiteness of all relevant
Tate-Shafarevich groups, then twisted Kummer surfaces, satisfying
certain additional mild assumptions, do indeed satisfy the Hasse
principle.

Another approach to computing the Cassels-Tate pairing on the
$2$-Selmer group of a genus $2$ Jacobian (in fact, more generally, of
a hyperelliptic Jacobian) is currently being developed by Shukla and
Stoll. This is based on the Albanese-Albanese definition of the
Cassels-Tate pairing, due to Poonen and Stoll \cite{PS}. The
practicality of their method is not yet clear, but unlike our method,
it only applies to curves with a rational Weierstrass point.

In the case of elliptic curves, algorithms for $4$-descent have been
developed, but computing the Cassels-Tate pairing on the $2$-Selmer
group requires significantly less effort. For genus $2$ Jacobians the
contrast is even more stark as there is, to our knowledge, no
practical way of carrying out a $4$-descent.  Computing the
Cassels-Tate pairing on the $2$-Selmer group therefore gives us
information that could not otherwise be obtained.  In particular we
have used our methods to compute the ranks of some genus $2$
Jacobians, taken from the LMFDB and from an experiment of Bruin and
Stoll \cite{BruinStoll}, in cases where the rank had only previously
been computed conditional on the Birch Swinnerton-Dyer conjecture. In
fact prior to our work there were $69$ genus~$2$ curves in the LMFDB
that were unresolved in this sense, and we have resolved all of them.

This paper has its origins in Chapters 5 and 6 of the second author's
PhD thesis \cite{JialiThesis}. However in the intervening two years we
have succeeded in making the method significantly more practical.
Some of the main improvements concern explicit $2$-descent on genus
$2$ Jacobians, by which we mean the problem of converting elements of
the $2$-Selmer group, represented algebraically (as units modulo
squares in suitable \'etale algebras), to explicit equations for the
corresponding $2$-coverings. This problem was previously addressed by
Flynn, Testa and van~Luijk \cite{2-coverings}, extending earlier work
of Gordon and Grant \cite{GG}.  However we find it convenient to take
a different approach, based on the observation that, at least over the
complex numbers, the Jacobian of a genus 2 curve may be realised as
the variety of lines on the intersection of two quadrics in
$\PP^5$. This ``neoclassical approach'' is discussed in the book of
Cassels and Flynn \cite[Chapter 17]{CF}, and has its origins in the
geometric literature in works of Newstead~\cite{Newstead}, Narasimhan
and Ramanan~\cite{NR}, Reid~\cite{Reid} and Donagi~\cite{Donagi}.
More recently it has found important applications in arithmetic
statistics; see for example \cite{BG}, \cite{SW}.

The upshot for us is that by representing the $2$-Selmer group
elements as pairs of quadratic forms in $6$ variables, we obtain
simple elegant formulae for the $2$-coverings in $\PP^{15}$, the
twisted Kummer surfaces in $\PP^3$, the twisted desingularised Kummer
surfaces in $\PP^5$, the maps between these, and (various maps induced
by) the covering map to the Jacobian.  We have also found
invariant-theoretic formulae, analogous to those in \cite{bq-ctp},
that allow us to directly compute the $(2,2,2)$-form that appears in
our formula for the Cassels-Tate pairing.

We concentrate in this paper in giving practical methods for curves of
genus~$2$. The question as to how much of our work generalises to
hyperelliptic curves of higher genus is left to future work. One
reason why such a generalisation might not be automatic is that we
make implicit use of the exceptional isomorphism of Lie algebras
$\mathfrak{so}_6 \isom \mathfrak{sl}_4$.  The practicality of
searching for rational points on higher dimensional Kummer varieties
could also be a problem.

Throughout this paper $\C$ will be a genus $2$ curve, defined over a
field $k$ of characteristic not $2$, with equation
\begin{equation}
\label{eqn:C}  
y^2 = f(x) = f_6 x^6 + f_5 x^5 + \ldots + f_1 x + f_0.
\end{equation}
We assume that $f_6 \not= 0$, noting that if $\deg f = 5$ and $|k|>5$
then we can reduce to this case by applying a suitable M\"obius map.

We have divided the paper into three main sections, each with its own
introduction. Section~\ref{sec:2} gives the background on $2$-descent,
including the refinements mentioned above.  In Section~\ref{sec:CTP}
we develop our method for computing the Cassels-Tate pairing, and in
Section~\ref{sec:examples} we give examples and applications.

We plan to make some Magma code accompanying this article (checking
some of the formulae and examples) available from the first author's
website.

\medskip

\noindent {\bf{Acknowledgements.}} We thank Victor Flynn, Jef Laga,
August Liu, Adam Morgan, Ross Paterson, Himanshu Shukla, Michael
Stoll, Drew Sutherland and Jack Thorne for useful conversations.

\section{Explicit 2-descent on genus 2 Jacobians}
\label{sec:2}

We give an account of $2$-descent on the Jacobian of a genus $2$
curve.  The emphasis is on explicit formulae, some of which are taken
from the existing literature and some of which are new.

In Sections~\ref{sec:kum} and~\ref{sec:desing} we give formulae
relating to the Kummer surface, the dual Kummer surface, the
desingularised Kummer, the embedding of the Jacobian in $\PP^{15}$,
and the action of the $2$-torsion points.  We are particularly
interested in constructions that carry over to the twisted Kummer
surfaces considered in Section~\ref{sec:tw-kum}.  In
Section~\ref{sec:sel} we briefly recall the standard algebraic
description of the $2$-Selmer group, and define its canonical
element. Section~\ref{sec:models} introduces the ``neoclassical
approach'' where we represent Selmer group elements as pairs of
quadratics forms. We call these pairs of quadratic forms {\em
  models}. In Section~\ref{sec:recover} we describe some methods for
recovering a Selmer group element from a model. In
Section~\ref{sec:tw-kum} we explain how a model determines an equation
for the corresponding twisted Kummer surface, and a wealth of other
information.

\subsection{The Kummer surface and its dual}
\label{sec:kum}
Let $\C$ be a genus $2$ curve with equation $y^2 = f(x)$ where $f$ is
a polynomial of degree $6$, with coefficients labelled as
in~\eqref{eqn:C}. Let $\J$ be the Jacobian of $\C$, and let
$\K = \J/[- 1]$ be its Kummer surface. We may represent points on $\J$
as \[ [(x,y) + (x',y') - \kappa] \] where $(x,y),(x',y') \in \C$ and
$\kappa$ is the canonical divisor on $\C$. A point on $\K$ may
therefore by represented by $x, x'$ and a choice of square root of
$f(x)f(x')$. It follows that $\K$ is the double cover of the
projective plane $\PP^2_{a,b,c}$ with equation
\begin{equation}
\label{double_cover}
  z^2 = \Res(a x^2 - b x + c,f(x)).
\end{equation}
If we specialise $a,b,c$ to $1,2t,t^2$ then this resultant equals
$f(t)^2$. This suggests putting
\begin{align*}
  P_2(a,b,c) &= b^2 - 4ac, \\
  P_3(a,b,c) &= 2(2f_0 a^3 + f_1 a^2 b + 2f_2 a^2 c 
   + f_3 abc + 2f_4 ac^2 + f_5  b c^2 + 2 f_6 c^3),              
\end{align*}
so that $P_2(1,2t,t^2) = 0$ and $P_3(1,2t,t^2) = 4f(t)$.  We can then
solve for the homogeneous polynomial $P_4(a,b,c)$ of degree $4$
satisfying
\[ P_3^2 - 4 P_2 P_4 = 16 \Res(a x^2 - b x + c,f(x)). \]
The equation~\eqref{double_cover} for the Kummer surface becomes
\[ \K = \{ P_2 d^2 - P_3 d + P_4
  = 0 \} \subset \PP^3_{a,b,c,d}. \]

Renaming the coordinates $a,b,c,d$ as $x_1, \ldots, x_4$ we have the
following equation for $\K \subset \PP^3$, in agreement
with~\cite[page~19]{CF}.
\begin{equation}
  \label{kumeqn}
\begin{aligned} F & = 
   (x_2^2 - 4 x_1 x_3) x_4^2 - 2 (2 f_0 x_1^3 + f_1 x_1^2 x_2 +
  2 f_2 x_1^2 x_3 + f_3 x_1 x_2 x_3 + 2 f_4 x_1 x_3^2 \\ &
   + f_5 x_2 x_3^2 + 2 f_6 x_3^3) x_4
   + (f_1^2 - 4 f_0 f_2) x_1^4 - 4 f_0 f_3 x_1^3 x_2 -
  2 f_1 f_3 x_1^3 x_3 \\ & - 4 f_0 f_4 x_1^2 x_2^2
    + 4 (f_0 f_5 - f_1 f_4) x_1^2 x_2 x_3
    + (f_3^2 -4 f_0 f_6 + 2 f_1 f_5 - 4 f_2 f_4) x_1^2 x_3^2
    \\ & - 4 f_0 f_5 x_1 x_2^3 + 4 (2 f_0 f_6 - f_1 f_5) x_1 x_2^2 x_3
    + 4 (f_1 f_6 - f_2 f_5) x_1 x_2 x_3^2
   - 2 f_3 f_5 x_1 x_3^3 \\ & - 4 f_0 f_6 x_2^4
    - 4 f_1 f_6 x_2^3 x_3 - 4 f_2 f_6 x_2^2 x_3^2 - 4 f_3 f_6 x_2 x_3^3 +
    (f_5^2 - 4 f_4 f_6) x_3^4.
    \end{aligned}
\end{equation}
This quartic surface is also the image of $\J$ by the linear system
$|2 \Theta|$, where $\Theta$ is the theta divisor.\footnote{We abuse
  notation (suppressing the choice of Weierstrass point) as in
  \cite[page 397]{2-coverings}.}  It has exactly 16 nodes, these being
the images of the $2$-torsion points on $\J$.

The dual Kummer $\K^\vee \subset (\PP^3)^\vee$ is obtained by mapping
each smooth point on $\K \subset \PP^3$ to its tangent plane.  We
write $x^*_1, \ldots, x^*_4$ for the coordinates on $(\PP^3)^\vee$
dual to the coordinates $x_1, \ldots, x_4$ on $\PP^3$.  According to
\cite[page 33]{CF}, the dual Kummer has equation
\[ \left| \begin{array}{cccc}
  2 f_0 x^*_4 & f_1 x^*_4 & x^*_1 & x^*_2 \\
  f_1 x^*_4 & 2 f_2 x^*_4 - 2 x^*_1 & f_3 x^*_4 - x^*_2 & x^*_3 \\
  x^*_1 & f_3 x^*_4 - x^*_2 & 2 f_4 x^*_4 - 2 x^*_3 & f_5 x^*_4 \\
  x^*_2 & x^*_3 & f_5 x^*_4 & 2 f_6 x^*_4
\end{array} \right| = 0. \]

We identify $\Pic^d \C$ and $\Pic^{d+2}\C$ via addition of the
canonical divisor, so that $\Pic^d \C$ only depends on the parity of
$d$. It is a double cover of $\K$ if $d$ is even and a double cover of
$\K^\vee$ if $d$ is odd.  Adding a point $P \in \C$ defines a map
$\Pic^d \C \to \Pic^{d+1} \C$.  If $P = (\theta,0)$ is a Weierstrass
point then these maps descend to a pair of inverse maps
$\K \to \K^\vee$ and $\K^\vee \to \K$. According to \cite[pages 38 and
184]{CF}, the map $\K \to \K^\vee$ is given by
\begin{equation}
  \label{skew-symm}
\begin{pmatrix} x^*_1 \\ x^*_2 \\ x^*_3 \\ x^*_4 \end{pmatrix}
   = \begin{pmatrix}
    0 & h_5(\theta) & h_4(\theta) & \theta^2 \\
    -h_5(\theta) & 0 & h_3(\theta) & -\theta \\
    -h_4(\theta) & -h_3(\theta) & 0 & 1 \\
    -\theta^2 & \theta & -1 & 0
  \end{pmatrix}
  \begin{pmatrix} x_1 \\ x_2 \\ x_3 \\ x_4 \end{pmatrix}
  \end{equation}
  where
  \begin{equation}
  \label{defh}
\begin{aligned}
  h_3(\theta) &= 2 f_6 \theta^3 + f_5 \theta^2,  \\
  h_4(\theta) &= 2 f_6 \theta^4 + 2 f_5 \theta^3
                  + 2 f_4 \theta^2 + f_3 \theta, \\
  h_5(\theta) &= 2 f_6 \theta^5 + 2 f_5 \theta^4 + 2 f_4 \theta^3 + 2
  f_3 \theta^2 + 2 f_2 \theta + f_1.
\end{aligned}
\end{equation}

For $A = (A_{ij})$ a $4 \times 4$ skew symmetric matrix we
put\footnote{In basis free language, we are identifying the
  alternating square of a $4$-dimensional vector space with its dual.}
\begin{equation}
\label{def:star}
A^* = \begin{pmatrix}
  0 & A_{43} & A_{24} & A_{32} \\ 
  A_{34} & 0 & A_{41} & A_{13} \\ 
  A_{42} & A_{14} & 0 & A_{21} \\ 
  A_{23} & A_{31} & A_{12} & 0 \\
\end{pmatrix}.
\end{equation}
This is again a skew symmetric matrix. The Pfaffian $\Pf(A)$ then
satisfies
\begin{equation}
\label{star-adj}
A A^*  = A^* A = \Pf(A) I_4.
\end{equation}
We note that $\Pf(A)^2 = \det(A)$, $\Pf(A^*) = \Pf(A)$ and
$A^{**} = A$.  Moreover for any $4 \times 4$ matrix $P$ we have
\begin{equation}
  \label{pf-det}
\Pf(P^T A P) = \det(P) \Pf(A).
\end{equation}  

Writing $A$ for the matrix in \eqref{skew-symm}, we have
\begin{equation}
\label{Pf0}
 \Pf(A) = h_5(\theta) + \theta h_4(\theta) + \theta^2 h_3(\theta)
 = f'(\theta).
\end{equation} 
Let $f$ have roots $\theta_1, \ldots, \theta_6$, and write $A_i$ for
the matrix obtained by taking $\theta = \theta_i$ in
\eqref{skew-symm}.  As explained in \cite[page 38]{CF}, for
$i \not= j$ the action of
$T_{ij} = [(\theta_i,0) + (\theta_j,0) - \kappa] \in \J[2]$ by
translation on $\K \subset \PP^3$ is given by
\begin{equation}
\label{MT}
A_i^* A_j = - A_j^* A_i.
\end{equation}
Since $T_{12} + T_{34} + T_{56} = 0$ it follows that
$A_1^* A_2 A_3^* A_4 A_5^* A_6 = \lambda I_4$ for some scalar
$\lambda$.  Squaring both sides and using~\eqref{star-adj},
\eqref{Pf0} and \eqref{MT} shows that
$\lambda^2 = f_6^6 \prod_{i<j} (\theta_i-\theta_j)^2$.  Making the
correct choice of square root it turns out that
\begin{equation}
\label{eqn:prod0}
A_1^* A_2 A_3^* A_4 A_5^* A_6 = f_6^3 \prod_{i<j} (\theta_i -\theta_j) I_4. 
\end{equation}
An alternative, less symmetric, way to write this identity is as
\begin{equation}
   \label{3-3}
   \frac{ A_1 A_2^* A_3 }{ (\theta_1 - \theta_2)
     (\theta_2 - \theta_3) (\theta_3 - \theta_1) }
   =  \frac{ A_4 A_5^* A_6 } {  (\theta_4 - \theta_5)
      (\theta_5 - \theta_6) (\theta_6 - \theta_4) }.
\end{equation}
\begin{Remark}
\label{rem:6-10}
The skew symmetric matrices $A_i$ define $6$ linear isomorphisms
$\K \to \K^\vee$. A further $10$ such isomorphisms are defined by the
symmetric matrices of the form~\eqref{3-3}. These matrices are indexed
by the $(1,5)$-partitions and $(3,3)$-partitions of the Weierstrass
points.
\end{Remark}

\subsection{The desingularised Kummer}
\label{sec:desing}
  
Blowing up the $16$ nodes of $\K$ (or equally of $\K^\vee$) gives a
surface $\dKum$ called the desingularised Kummer. Equations for
$\dKum$ may be obtained as follows; see for example \cite[Section
4]{2-coverings}. Let $L=k[x]/(f)=k[\theta]$ and define quadratic forms
$Q_0,Q_1, \ldots, Q_5 \in k[u_0,u_1, \ldots, u_5]$ by
\begin{equation}
\label{desing1}
    (u_0 + u_1 \theta + \ldots + u_5 \theta^5)^2 = Q_0 + Q_1 \theta +
    \ldots + Q_5 \theta^5.
\end{equation}
Then $\dKum = \{Q_3 = Q_4 = Q_5 = 0\} \subset \PP^5$.  The quadratic
form $Q_5$ only involves the monomials $u_i u_j$ with
$i + j \geqslant 5$, and so has a 3-dimensional isotropic subspace
given by $u_3 = u_4 = u_5 = 0$. This suggest making a change of
coordinate on $\PP^5$ transforming $Q_5$ to a scalar multiple of
$u_0 u_5 + u_1 u_4 + u_2 u_3$.  Such a change of coordinates is given
by replacing the basis $1, \theta, \ldots, \theta^5$ for $k[\theta]$
on the left hand side of~\eqref{desing1} by the basis
\[ 1, \theta, \theta^2, h_3(\theta), h_4(\theta), h_5(\theta). \]
where the $h_j(\theta)$ were defined in~\eqref{defh}.  Making the same
change of basis on the right hand side of~\eqref{desing1}, we find
that $Q_5 = 2G$, $Q_4 = 2H$ and $Q_3 = 2 S$ where
 \begin{align*}
   G & = u_0 u_5 + u_1 u_4 + u_2 u_3, \\
   H & = u_0 u_4 + u_1 u_3 - f_0 u_5^2 - f_1 u_4 u_5 - f_2 u_4^2
       - f_3 u_3 u_4 - f_4 u_3^2 + \tfrac{1}{4}  f_6^{-1}
       (u_2 - f_5 u_3)^2, \\
   S &= u_0 u_3 - 2 f_0 u_4 u_5 - f_1 (u_3 u_5 + u_4^2)
       - 2 f_2 u_3 u_4 - f_3 u_3^2 \\ & \qquad \qquad
       + \tfrac{1}{2} f_6^{-1}  (u_1 - f_3 u_4 - 2 f_4 u_3) (u_2 - f_5 u_3)
       - \tfrac{1}{4} f_6^{-2} f_5 (u_2 - f_5 u_3)^2.
\end{align*}
The desingularised Kummer is now given by
$\dKum = \{G = H = S = 0\} \subset \PP^5$.

Writing $\bG$, $\bH$, $\bS$ for the symmetric matrices with
$G(u) = \frac{1}{2} \bu^T \bG \bu$ etc., where
$\bu = (u_0, \ldots, u_5)^T$, we find that $\bS = \bH \bG^{-1} \bH$
and
\begin{equation*}
\label{det1}
   \det (x \bG - \bH) = - f_6^{-1} f(x).
\end{equation*}
In particular the $6 \times 6$ matrix $\theta \bG - \bH$ is
singular. In fact this matrix has rank~5 and its kernel is spanned by
the vector with entries
\begin{equation}
\label{Phi0}
 h_5(\theta), h_4(\theta), h_3(\theta), \theta^2, \theta, 1.
\end{equation}

We may associate to each point $(x_1:x_2:x_3:x_4) \in \PP^3$ a
$3$-dimensional isotropic subspace for $G$ spanned by the rows of the
matrix
\begin{equation}
\label{def:Lambda}
   \Lambda = \begin{pmatrix}
      0 & 0 & x_4 & 0 & x_3 & x_2 \\
      0 & -x_4 & 0 & x_3 & 0 & -x_1 \\
      x_4 & 0 & 0 & -x_2 & -x_1 & 0 \\
      -x_3 & x_2 & -x_1 & 0 & 0 & 0 
\end{pmatrix}.
\end{equation}
The ``neoclassical approach'' 
(see the introduction for references) suggests we look for lines
contained in $\{ G = H = 0 \} \subset \PP^5$.  We obtain two such
lines if the restriction of $H$ to a $3$-dimensional isotropic
subspace for $G$ has rank $2$. This suggests that $\K \subset \PP^3$
should be defined by the $3 \times 3$ minors of
$B = \Lambda \bH \Lambda^T$. This is confirmed by a direct
calculation, which more precisely shows that the equation $F$ for $\K$
in~\eqref{kumeqn} satisfies
\begin{equation}
\label{getF}
 -2f_6 \adj B = F (x_1, \ldots, x_4) \x \x^T
\end{equation}
where $\x = (x_1,x_2,x_3,x_4)^T$.

The equations for $\dKum$ may be written in terms of Pfaffians. To do
this we define skew symmetric matrices of linear forms in
$u_0, \ldots, u_5$ by
\begin{equation}
\label{def:ZW}
Z = \begin{pmatrix}
   0 & u_0 & u_1 & u_3 \\
   & 0 & u_2 & -u_4 \\
   &  & 0 & u_5 \\
   & & & 0
\end{pmatrix} \quad \text{ and } \quad
W = \begin{pmatrix}
   0 & H_0 & H_1 & H_3 \\
   & 0 & H_2 & -H_4 \\
   &  & 0 & H_5 \\
   & & & 0
\end{pmatrix}^*
\end{equation}
where $H_i = \partial H/\partial u_i$, and the superscript $*$ has the
meaning explained in~\eqref{def:star}. Then for indeterminates
$\lambda$ and $\mu$ we have
 \[ \Pf( \lambda Z + \mu W) = \lambda^2 G - 2 \lambda \mu H + \mu^2 S. \]

\begin{Remark}
\label{rem:Kmaps}
The birational map between $\K \subset \PP^3$ and
$\dKum \subset \PP^5$ may be described in the following ways.
\begin{enumerate}
\item The graph of this map in $\PP^3 \times \PP^5$ is defined by the
  8 bilinear forms
\begin{equation*}
(x_1 \,\, x_2 \,\, x_3 \,\, x_4) (Z | W ) = 0.
\end{equation*}
\item The $4 \times 4$ matrix of quadratic forms $Z W^*$ has rank at
  most $1$ on $\dKum$.  Any row defines the map $\dKum \to \K$ and any
  column defines the map $\dKum \to \K^\vee$.
\item The $2 \times 2$ minors of $B$ may be arranged in a $6 \times 6$
  symmetric matrix of quartic forms that has rank at most $1$ on $\K$.
  Any row or column defines the map $\K \to \dKum$.  Setting any
  diagonal entry of the $6 \times 6$ matrix equal to $-f_6$ times a
  square defines the double cover $\J \to \K$.
\end{enumerate}
\end{Remark}

\begin{Remark}
  The previous remark is closely related to the 72 quadratic equations
  defining $\J \subset \PP^{15}$, as originally computed by
  Flynn~\cite{72}, and revisited in \cite{2-coverings}. We write our
  coordinates on $\PP^{15}$ as $x_{ij}$ and $z_{ij}$ where these are
  the entries of a generic $4 \times 4$ symmetric matrix $X$, and a
  generic $4 \times 4$ skew symmetric matrix $Z$. We identify the
  $z_{ij}$ with the $u_i$ via \eqref{def:ZW}. Our first $21$ quadratic
  equations are the $2 \times 2$ minors of $X$.  Next the entries of
  $XZ$ and $XW$ give $32$ quadratic equations, in fact only spanning a
  space of dimension $30$, since these matrices have trace zero.
  Finally, generalising the last part of Remark~\ref{rem:Kmaps}(iii),
  there are $21$ quadratic equations
  $-f_6 z_{ij} z_{kl} = B_{ik} B_{jl} - B_{il} B_{jk}$ where each
  quartic on the right is rewritten as a quadratic using
  $x_{rs} = x_r x_s$. In view of the first $21$ equations, the choices
  here do not matter. In total this gives $21 + 30 + 21 = 72$
  equations.
\end{Remark}

\subsection{The Selmer and fake Selmer groups}
\label{sec:sel}

We continue to take $\C$ a genus $2$ curve with equation $y^2 = f(x)$
where $f \in k[x]$ is a polynomial of degree $6$. We now suppose that
$k$ is a number field. Let $\J$ be the Jacobian of $\C$.  The
$2$-Selmer group is
\begin{equation}
\label{def:sel}
S^{(2)}(\J/k) = \ker\left( H^1(k,\J[2]) \to \prod_v H^1(k_v,\J) \right).
\end{equation}

Let $W$ be the set of Weierstrass points on $\C$, that is, the points
$(\theta,0)$ where $\theta$ is a root of $f$.  Let $\Phi$ be the set
of all ways of partitioning $W$ into two subsets. There is a natural
Galois action on $\Phi$ induced by the Galois action on $W$, and a
group law given by symmetric difference, equivalently by pointwise
addition of indicator functions in $\Map(W,\F_2)/\F_2$. We write
$\Phi = \Phi_0 \sqcup \Phi_1$ where $\Phi_0$ (resp. $\Phi_1$) is the
set of decompositions into subsets of even (resp. odd) size.  It is
well known that $\Phi_0$ is isomorphic to $\J[2]$ as a Galois
module. It follows that $\Phi_1$ is a torsor under $\J[2]$.  We write
$c \in H^1(k,\J[2])$ for the class of $\Phi_1$.

\begin{Lemma}
\label{lemL}
The canonical element $c$ belongs to $S^{(2)}(\J/k)$.
\end{Lemma}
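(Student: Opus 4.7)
The plan is to verify the Selmer condition at every place $v$ of $k$ by first identifying the image of $c$ under the connecting homomorphism $\delta \colon H^1(k,\J[2]) \to H^1(k,\J)$ attached to the multiplication-by-$2$ sequence $0 \to \J[2] \to \J \xrightarrow{\times 2} \J \to 0$, and then arguing local triviality of that image.

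First I would show that $\delta(c) = [\Pic^1 \C]$, the class of the degree-$1$ component of the Picard scheme regarded as a $\J$-torsor. The natural candidate for this identification sends a partition $\{S, S^c\} \in \Phi_1$ to the degree-$1$ divisor class $D_S := \sum_{P \in S}[P] - \tfrac{|S|-1}{2}[\kappa]$. The relation $2[P] = [\kappa]$ for each Weierstrass point $P$ gives $2 D_S = [\kappa]$, so $D_S$ lies in the $\J[2]$-torsor $\Pic^1\C[2] := \{D \in \Pic^1\C(\kbar) : 2D = [\kappa]\}$; together with $D_S + D_{S^c} = [\kappa]$ this forces $D_S = D_{S^c}$, making the assignment well-defined on $\Phi_1$. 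A direct symmetric-difference calculation checks that the map is $\J[2]$-equivariant (under the $\Phi_0 \isom \J[2]$-action on $\Phi_1$ by symmetric difference and the translation action on $\Pic^1\C$), and since $|\Phi_1| = 16 = |\Pic^1\C[2]|$ the map is bijective. Extending along $\J[2] \hookrightarrow \J$ then gives an isomorphism of $\J$-torsors $\Phi_1 \times^{\J[2]} \J \isom \Pic^1\C$, so $\delta(c) = [\Pic^1\C]$ in $H^1(k, \J)$.

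It then remains to show that $\Pic^1\C$ has a $k_v$-rational point at every place $v$. Where $f_6 \in (k_v^*)^2$, the point $(1:\sqrt{f_6}:0)$ at infinity is $k_v$-rational and gives a degree-$1$ divisor immediately. Otherwise the two points at infinity form a Galois pair, and we need a rational degree-$1$ divisor class from another source; the bijection of the previous step lets us convert any Galois-invariant odd partition in $\Phi_1$ defined over $k_v$ into such a class via $D_S$. More abstractly, the rational canonical divisor $[\kappa]$ of degree $2$ forces the period of $\Pic^1\C$ to divide $2g-2=2$, and by Lichtenbaum's theorem the period equals the index over a local field, so local triviality of $[\Pic^1\C]$ reduces to ruling out index~$2$.

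The main obstacle is the last step, particularly at bad-reduction places where $f_6$ is a non-square in $k_v$ and $\C(k_v) = \emptyset$. Here one has to show that, no matter how the absolute Galois group of $k_v$ permutes the six Weierstrass points, there is always a $G_{k_v}$-stable odd partition in $\Phi_1$ (equivalently, a $k_v$-rational degree-$1$ divisor class on $\C$). I expect this to reduce to a case analysis on the Galois orbit structure of $W$, in which the combinatorics of odd partitions of a $6$-element set is rich enough to supply the required fixed partition in every case, so that the obstruction $\delta(c)$ vanishes locally at every place.
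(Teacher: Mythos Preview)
Your identification of $\delta(c)$ with $[\Pic^1\C]$ via the explicit map $\{S,S^c\}\mapsto D_S$ is correct and matches the paper's argument (the paper does this in one line via the cocycle $\sigma\mapsto[\sigma P-P]$ for a Weierstrass point $P$). The overall strategy---reduce to local triviality of $[\Pic^1\C]$---is exactly the paper's.

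The gap is in your final step. The parenthetical ``equivalently, a $k_v$-rational degree-$1$ divisor class on $\C$'' is wrong: a Galois-fixed element of $\Phi_1$ certainly produces such a class, but not conversely, so the combinatorial claim you hope to prove is strictly stronger than what is needed. And that stronger claim is false. Take $G=\langle(12),(34),(56)\rangle\cong(\Z/2\Z)^3$ acting on the six Weierstrass points; this is abelian, hence can occur as a local Galois group (e.g.\ for $f(x)=d(x^2-a)(x^2-b)(x^2-c)$ with suitably independent square classes). There is no fixed $(1,5)$-partition since $G$ has no fixed point, and for a $(3,3)$-partition $\{X,X^c\}$ each transposition $(ij)$ can preserve it only if $i,j$ lie in the same part (a transposition cannot swap two $3$-sets), forcing $X$ to be a union of the pairs $\{1,2\},\{3,4\},\{5,6\}$, contradicting $|X|=3$. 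So the case analysis you anticipate cannot close.

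Your invocation of Lichtenbaum as ``period $=$ index'' does not help either: knowing the period divides $2$ and equals the index only says the index divides $2$, which is what you are trying to improve. What is actually needed is the stronger consequence of Tate local duality (Lichtenbaum, Theorem~7; see also Poonen--Stoll, Lemma~1) that a curve over a local field always admits a rational divisor class of degree $g-1$; for $g=2$ this gives a $k_v$-point on $\Pic^1\C$ directly. This is precisely what the paper cites, and it does not follow from the combinatorics of Weierstrass points.
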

\begin{proof}
  The canonical element $c$ is represented by the cocycle
  $\sigma \mapsto [\sigma P - P]$ where $P \in \C$ is a Weierstrass
  point. It follows that the natural map $H^1(k,\J[2]) \to H^1(k,\J)$
  sends $c$ to the class of $\Pic^1 \C$. The lemma is therefore
  equivalent to showing that $\C$ admits a $k_v$-rational divisor
  class of degree $1$ for each place $v$ of~$k$. As noted by Poonen
  and Stoll~\cite[Lemma~1]{PS} this is a consequence of Tate local
  duality, and is originally due to Lichtenbaum~\cite[Theorem
  7]{Lichtenbaum-dualitythm}.
\end{proof}  

\begin{Definition}
  The fake Selmer group is
  $S_{\rm fake}^{(2)}(\J/k) = S^{(2)}(\J/k)/\langle c \rangle$.
\end{Definition}

The analogue of~\eqref{def:sel} is
\[ S_{\rm fake}^{(2)}(\J/k) = \ker \left( \frac{H^1(k,\J[2])}{\langle c
      \rangle} \to \prod_v H^1(k_v,\J)  \right). \]

Let $L = k[x]/(f) = k[\theta]$ be the \'etale algebra of $W$. As shown
by Poonen and Schaefer \cite{PoonenSchaefer} there is an isomorphism
\begin{equation}
\label{messy-isom}
 \frac{H^1(k,\J[2])^{\cup c = 0}}{\langle c \rangle}
 \isom \ker \left( L^\times/(k^\times (L^\times)^2)
    \stackrel{N_{L/k}}{\ra} k^\times/(k^\times)^2 \right)
\end{equation}
where the superscript ${\cup c = 0}$ indicates the subgroup of
elements that pair trivially with $c$ under the pairing
\[H^1(k,\J[2]) \times H^1(k,\J[2]) \to \Br(k)\] given by cup product
and the Weil pairing. The composite of the connecting map
$\J(k)/2\J(k) \to H^1(k,\J[2])$ and~\eqref{messy-isom} is the Cassels
map (see e.g.~\cite{Cassels-G2}, \cite{FPS}), given for $yy' \not=0$ by
\begin{equation}
\label{cassels-map}
[(x,y)+(x',y')-\kappa] \mapsto (x - \theta)(x' - \theta).
\end{equation}
The fake Selmer group $S_{\rm fake}^{(2)}(\J/k)$ naturally arises as
the subgroup of the right hand side of~\eqref{messy-isom} consisting
of elements that are everywhere locally in the image of the Cassels
map.  This group can be computed in Magma \cite{magma}, using programs
originally due to Stoll~\cite{Stoll}.

It is easy to compute the dimension of $S^{(2)}(\J/k)$ as an
$\F_2$-vector space from that of $S_{\rm fake}^{(2)}(\J/k)$, simply by
adding $0$ or $1$ according as the canonical element $c$ is trivial or
non-trivial.  However we shall need a more explicit way of
representing elements of $S^{(2)}(\J/k)$. As explained by Stoll and
van Luijk \cite{unfaking}, this is given by augmenting elements of the
right hand side of~\eqref{messy-isom} with a choice of square root of
the norm. Thus $S^{(2)}(\J/k)$ is a subgroup of
\begin{equation}
\label{xi-m-pairs}
 H^1(k,\J[2])^{\cup c = 0} \isom \frac{\{ (\xi,m)
    \in L^\times \times k^\times
    | N_{L/k}(\xi) = m^2 \}}{ \{(r \nu^2, r^3 N_{L/k}(\nu)) |
    r \in k^\times, \nu \in L^\times \}}.
\end{equation}

\subsection{Models for 2-coverings} 
\label{sec:models}
Following the ``neoclassical approach'' (see the introduction for
references) we explain how elements of the $2$-Selmer group
$S^{(2)}(\J/k)$ may be represented by certain pairs of quadratic
forms, which we call {\em models}.

We only need the following proposition in the case $n=6$, but the
extra generality costs us nothing.
\begin{Proposition}
  \label{lem:viadiff}
  Let $f(x) = f_n x^n 
  + \ldots + f_1 x + f_0 \in k[x]$ be a square-free polynomial of
  degree $n$.  Let $L = k[\theta] = k[x]/(f)$ and fix
  $\xi \in L^\times$. Consider the quadratic forms
  $Q_j \in k[u_0, \ldots,u_{n-1}]$ defined by
  \begin{equation}
    \label{def:Q}
   \xi(u_0 + u_1 \theta + \ldots + u_{n-1} \theta^{n-1})^2
   = \sum_{j=0}^{n-1} Q_j (u_0, \ldots, u_{n-1}) \theta^j.
   \end{equation}
    \begin{enumerate}
    \item There are $n \times n$ symmetric matrices $\bG$ and $\bH$,
      with $\bG$ invertible, such that
      \begin{equation*}
        \qquad Q_j(u_0, \ldots, u_{n-1})
        = f_{n}^{-1} \sum_{i = j+1}^n f_i \bu^T \bG
        (\bG^{-1} \bH )^{i-j-1} \bu  \end{equation*}
      for all $0 \leqslant j \leqslant n - 1$,
      where $\bu = (u_0, \ldots, u_{n-1})^T$.
    \item We have \[\det(x \bG - \bH) =
        (-1)^{\binom{n}{2}} f_n^{-1} N_{L/k}(\xi) f(x).\]
\item If $\vv = \bG^{-1} (1 ,\theta, \ldots, \theta^{n-1})^T$ then
  $(\theta \bG - \bH) \vv = 0$ and $\vv^T \bG \vv = f_n^{-1} f'(\theta)/\xi$.
\item We have
  \[ \qquad \left( \frac{1}{2^n} \, \frac{\partial(Q_0, \ldots,
        Q_{n-1})}{\partial(u_0,\ldots,u_{n-1})} \right)^2 =
    (-1)^{\binom{n}{2}} (\det \bG) N_{L/k} \left( \sum_{j=0}^{n-1} Q_j
      (u_0, \ldots, u_{n-1}) \theta^j \right). \]
 \end{enumerate}
\end{Proposition}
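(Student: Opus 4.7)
The plan is to view each $Q_j$ as the coefficient of $\theta^j$ in the element $\xi w^2 \in L$, where $w := u_0 + u_1 \theta + \ldots + u_{n-1}\theta^{n-1}$, and to encode the relevant bilinear structure on $L$ via Gram matrices. Specifically, take $\bG$ to be the Gram matrix, in the basis $1, \theta, \ldots, \theta^{n-1}$, of the symmetric non-degenerate bilinear form $(a,b) \mapsto \pi_{n-1}(\xi a b)$ on $L$, where $\pi_j : L \to k$ extracts the coefficient of $\theta^j$. Let $M$ be the matrix of multiplication by $\theta$ on $L$ in the same basis, and set $\bH := \bG M$. Symmetry of $\bH$ holds by commutativity of $L$ (since $\pi_{n-1}(\xi \theta^i \cdot \theta^{j+1}) = \pi_{n-1}(\xi \theta^{i+1} \cdot \theta^j)$), and by construction $\bG^{-1}\bH = M$.

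Part~(i) then follows from the direct expansion $\bu^T \bG (\bG^{-1}\bH)^k \bu = \pi_{n-1}(\xi w^2 \theta^k)$ (with the product computed in $L$), combined with the identity $\pi_{n-1}(\theta^l s_j(\theta)) = f_n \delta_{jl}$ for $0 \leq l \leq n-1$, where $s_j(\theta) := \sum_{i=j+1}^n f_i \theta^{i-j-1}$; this identity is immediate from $\theta^{j+1} s_j(\theta) = -\sum_{i=0}^j f_i \theta^i$, which in turn comes from $f(\theta) = 0$. Pairing against $\xi w^2 = \sum_l Q_l \theta^l$ yields $f_n Q_j = \pi_{n-1}(\xi w^2 s_j(\theta))$, which, when $s_j$ is expanded linearly, is exactly the formula claimed in~(i). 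For part~(ii), since $\bG^{-1}\bH = M$ and $M$ has characteristic polynomial $f(x)/f_n$ (the monic minimal polynomial of $\theta$, of degree $n = \dim_k L$), we get $\det(x\bG - \bH) = \det(\bG) \cdot f(x)/f_n$; the identity $\pi_{n-1}(a) = f_n \tr_{L/k}(a/f'(\theta))$ (a Vandermonde computation) together with standard discriminant formulas for twisted trace forms yields $\det \bG = (-1)^{\binom{n}{2}} N_{L/k}(\xi)$.

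For part~(iii), symmetry of $\bH$ gives $M = \bG^{-1} M^T \bG$, so $M \vv = \bG^{-1} M^T (1,\theta,\ldots,\theta^{n-1})^T$; a direct check using $f(\theta) = 0$ shows $M^T (1,\theta,\ldots,\theta^{n-1})^T = \theta \cdot (1,\theta,\ldots,\theta^{n-1})^T$, and hence $M\vv = \theta\vv$. The formula $\vv^T \bG \vv = f_n^{-1} f'(\theta)/\xi$ then reduces, via the trace-dual basis of $\{\theta^i\}$ under $(a,b) \mapsto \tr_{L/k}(\xi ab/f'(\theta))$, to the elementary identity $\sum_j \theta^j q_j = f'(\theta)$, which follows from $f(x) = (x-\theta)\sum q_j x^j$ in $L[x]$ by setting $x = \theta$. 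Finally, for part~(iv), differentiating the defining equation $\xi w^2 = \sum_l Q_l \theta^l$ with respect to $u_k$ gives $2 \xi w \theta^k = \sum_l (\partial Q_l/\partial u_k) \theta^l$, so the Jacobian matrix $(\partial Q_l/\partial u_k)$ is the matrix (in the basis $\{\theta^j\}$) of multiplication by $2\xi w$ on $L$; its determinant equals $N_{L/k}(2\xi w) = 2^n N_{L/k}(\xi w)$, and squaring combined with (ii) gives the stated identity. The main technical obstacle is the discriminant/norm bookkeeping for $\det \bG$ in~(ii): non-monicity of $f$ requires carefully tracking factors of $f_n$ and the sign $(-1)^{\binom{n}{2}}$; once this is settled, parts (iii) and (iv) follow cleanly from the basic setup.
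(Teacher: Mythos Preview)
Your proof is correct and follows essentially the same route as the paper. Both arguments take $\bG$ to be the Gram matrix of the bilinear form $(a,b)\mapsto f_n\tr_{L/k}(\xi ab/f'(\theta))$ on $L$ in the basis $1,\theta,\ldots,\theta^{n-1}$ (your $\pi_{n-1}$ is exactly this form, as you note), set $\bH=\bG M$ with $M$ the multiplication-by-$\theta$ matrix, use the dual basis $\beta_j/f'(\theta)$ (your $s_j=q_j$ are the paper's $\beta_j$), and for (iv) observe that the Jacobian matrix represents multiplication by $2\xi w$ on $L$. One tiny imprecision: your ``setting $x=\theta$'' in $f(x)=(x-\theta)\sum q_jx^j$ literally gives $0=0$; what you mean is to evaluate the cofactor $\sum q_jx^j$ at $x=\theta$ (equivalently, differentiate and then set $x=\theta$), which indeed yields $f'(\theta)$.
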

\begin{proof}
  Let $\theta_1, \ldots, \theta_n$ be the roots of $f$. A well known
  identity that is used in the theory of the different (see for
  example \cite[page 45]{brief}) states that
  \[ \sum_{r=1}^n \frac{f(x)}{x - \theta_r} \,
    \frac{\theta_r^i}{f'(\theta_r)} = x^i. \] for all
  $0 \leqslant i \leqslant n-1$.  The proof is simply that each side
  is a polynomial of degree less than $n$ taking the same values at
  $\theta_1, \ldots, \theta_n$.  Writing
\begin{equation}
\label{def:betas}
f(x) = (x- \theta)(\beta_{n-1} x^{n-1} + \ldots + \beta_1 x + \beta_0)
\end{equation}
it follows that the $\theta^i$ and $\beta_i/f'(\theta)$ are dual bases
for $L$ with respect to the trace pairing
$(x,y) \mapsto \tr_{L/k}(xy)$.  In particular for any $a \in L$ we
have
\begin{equation}
\label{eqn:trace2}
N_{L/k}(a) = \det \left( \tr_{L/k} \left( \frac{a \theta^i \beta_j}{f'(\theta)}
  \right)_{i,j = 0, \ldots,n-1} \right)
\end{equation}
and~\eqref{def:Q} is satisfied with
\begin{equation}
  \label{def:Qj}
 Q_j(u_0, \ldots, u_{n-1}) = \tr_{L/k} \left( \frac{ \xi \beta_j
     (u_0 + \ldots + u_{n-1} \theta^{n-1})^2}{f'(\theta)} \right).
 \end{equation}

\noindent
(i) Let $\bG$ represent the pairing
$(x,y) \mapsto \tr_{L/k}( f_n \xi xy/f'(\theta))$, and $\bTheta$
represent the multiplication-by-$\theta$ map, both with respect to the
basis $1, \theta, \ldots, \theta^{n-1}$.  Comparing coefficients
in~\eqref{def:betas} shows that
$\beta_j = \sum_{i=j+1}^n f_i \theta^{i-j-1}$.
Rewriting~\eqref{def:Qj} using this notation and putting
$\bH = \bG \bTheta$ gives the stated formula.

\noindent
(ii) We have $\det(x \bG - \bH) = \det \bG \det(x I_n - \bTheta)$
where
\begin{equation}
\label{detG}
\det \bG = N_{L/k} (\xi)  \Delta(1,\theta, \ldots,
\theta^{n-1}) /N_{L/k}(f_n^{-1} f'(\theta)) = (-1)^{\binom{n}{2}} N_{L/k} (\xi)
\end{equation}
and $\det(x I_n - \bTheta) = f_n^{-1} f(x)$.

\noindent
(iii) We have $\vv^T (\theta \bG - \bH) =
(1, \theta, \ldots, \theta^{n-1}) (\theta I_n - \bTheta) = 0$.
On the other hand $\bG \vv = (1, \theta, \ldots, \theta^{n-1})^T$
and $\vv = (f_n \xi)^{-1} (\beta_0,\beta_1, \ldots, \beta_{n-1})^T$. 
Therefore \[\vv^T \bG \vv = (\beta_0 + \beta_1 \theta +
\ldots + \beta_{n-1} \theta^{n-1} )/(f_n \xi) = f'(\theta)/(f_n \xi). \]

\noindent
(iv) We differentiate~\eqref{def:Qj} to get
\[ \frac{\partial Q_j}{\partial u_i}(u_0, \ldots, u_{n-1}) = \tr_{L/k}
  \left( \frac{ 2 \xi \theta^i \beta_j (u_0 + \ldots + u_{n-1}
      \theta^{n-1})}{f'(\theta)} \right). \]
It follows by~\eqref{eqn:trace2} that
\[ \frac{\partial(Q_0, \ldots, Q_{n-1})}{\partial(u_0,\ldots,u_{n-1})} 
  = N_{L/k} \left( 2 \xi (u_0 + u_1 \theta +  \ldots + u_{n-1} \theta^{n-1})
   \right). \]
Squaring both sides and then using~\eqref{def:Q} and~\eqref{detG}
gives the stated formula.
\end{proof}

We now return to considering a genus $2$ curve $\C$ with equation
$y^2 = f(x)$ where $f \in k[x]$ is a polynomial of degree $6$.  Let
$\J$ be the Jacobian of $\C$.

\begin{Definition}
  \label{def:model}
  (i) A {\em model} ({\em for a $2$-covering of $\J$}) is a pair of
  quadratic forms $(G,H)$ in variables $u_0, \ldots, u_5$, where
  $G = u_0 u_5 + u_1 u_4 + u_2 u_3$ and the corresponding symmetric
  matrices (given by $G(\bu) = \frac{1}{2} \bu^T \bG \bu$ and
  $H(\bu) = \frac{1}{2} \bu^T \bH \bu$) satisfy
\begin{equation}
  \label{det:cond}
  \det(x \bG - \bH) = -f_6^{-1} f(x).
\end{equation}
Since $G$ is fixed,
we sometimes refer to the model $(G,H)$ simply as $H$. \\
(ii) Two models are {\em $k$-equivalent} if they are in the same orbit
for the action of $\PGL_4(k)$ defined as follows. We identify the
$4 \times 4$ skew symmetric matrices $Z$ with the column vectors
$\bu = (u_0, \ldots, u_5)^T$ via~\eqref{def:ZW}, so that $G =
\Pf(Z)$. Then the action of $P \in \GL_4$ on the space of such
matrices via $Z \mapsto P Z P^T$ is described by a matrix
$\wedge^2 P \in \GL_6$.  We let $P \in \GL_4$ act on a model $(G,H)$
first by changing coordinates using $\wedge^2 P$ and then dividing
both $G$ and $H$ by $\det P$. By~\eqref{pf-det} this preserves $G$.
Since the scalar matrices act trivially, this is an action of
$\PGL_4$.
\end{Definition}
  
\begin{Remark}
  (i) If we change our equation $y^2 = f(x)$ for $\C$ by applying a
  M\"obius map to the $x$-coordinate then, provided $f$ still has degree
  $6$, we may update our models by applying the inverse M\"obius map to
  the transformation $\bTheta = \bG^{-1} \bH$ in the proof
  of Proposition~\ref{lem:viadiff}. \\
  (ii) Any model for a $2$-covering of $\J = \Jac(\C)$ is also a model
  for a $2$-covering of $\J_d = \Jac(\C_d)$ where
  $\C_d : d y^2 = f(x)$ is any quadratic twist of $\C$.
\end{Remark}

Starting from $(\xi,m) \in S^{(2)}(\J/k)$ we compute a model as
follows. First, analogous to~\eqref{desing1}, we define quadratic
forms $Q_0,Q_1, \ldots, Q_5 \in k[u_0,u_1, \ldots, u_5]$ by
\begin{equation}
\label{desing2}
   \xi (u_0 + u_1 \theta + \ldots + u_5 \theta^5)^2 = Q_0 + Q_1 \theta +
    \ldots + Q_5 \theta^5.
\end{equation}
If $\xi$ is in the image of the Cassels map~\eqref{cassels-map} then,
after multiplying by a square, it is quadratic in $\theta$.  This
motivates our interest in rational points on the twisted
desingularised Kummer surface $\{Q_3 = Q_4 = Q_5 = 0\} \subset \PP^5$.
As suggested by Proposition~\ref{lem:viadiff}(i) with $n=6$ (see also
\cite[page 167]{CF} or \cite[page 403]{2-coverings}), we rewrite our
equations for this surface as
\begin{align*}
    G &= Q_5/2, \\
    H &= (f_6 Q_4 - f_5 Q_5)/(2 f_6), \\
    S &= (f_6^2 Q_3 - f_5 f_6 Q_4 + (f_5^2 - f_4 f_6) Q_5)/(2 f_6^2),
\end{align*}
The corresponding symmetric matrices (with
$G(\bu) = \frac{1}{2} \bu^T \bG \bu$ etc.) are then related by
$\bS = \bH \bG^{-1} \bH$.  By Proposition~\ref{lem:viadiff}(ii) we
have
\begin{equation}
\label{det-GH}
\det( x \bG - \bH) = - f_6^{-1} N_{L/k} (\xi) f(x).
\end{equation}

If $\xi$ is in the image of the Cassels map~\eqref{cassels-map} then,
after multiplying by a square, it is quadratic in $\theta$.  If we
further take $u_2 = u_3 = u_4 = u_5 = 0$ then the left hand side
of~\eqref{desing2} has no term $\theta^5$.  It follows that the
quadratic form $G$ has a $2$-dimensional isotropic subspace, and is
therefore the orthogonal direct sum of two hyperbolic planes and a
binary quadratic form. As a special case of~\eqref{det-GH} we have
$\det \bG = -N_{L/k}(\xi)$.  Since $N_{L/k}(\xi) = m^2$ the binary
quadratic form has discriminant a square. Therefore $G$ has a
$3$-dimensional isotropic subspace.\footnote{Our argument is specific
  to curves of genus $2$. However corresponding results are known for
  hyperelliptic curves in general; see \cite{Thorne}, \cite{Wang}.}

We are interested in $(\xi,m)$ representing an element of the Selmer
group. This means that $\xi$ is everywhere locally in the image of the
Cassels map.  The previous paragraph shows that $G$ has a
$3$-dimensional isotropic subspace everywhere locally. By a suitable
form on the the Hasse principle (for example, the weak Hasse principle
in \cite[Chapter 6]{Cassels-RCF}) it follows that $G$ has a
$3$-dimensional isotropic subspace globally.  In the case $k = \Q$ we
use the existing function {\tt IsotropicSubspace} in Magma
\cite{magma} to find such a subspace.  It is then easy to find a
change of coordinates putting $G$ in the standard form specified in
Definition~\ref{def:model}(i).  We can and do choose this change of
coordinates so that it has determinant $m$ (rather than $-m$).  This
gives the desired model $(G,H)$.

It is shown in \cite[Lemma 17.1.1]{CF} that the $3$-dimensional
isotropic subspaces of $G$ fall into two algebraic families.  This may
conveniently be seen as follows. We write $G = \Pf(Z)$ as in
Definition~\ref{def:model}(ii).  Setting all entries in the $j$th row
and column to zero makes $G$ vanish, as does setting all entries
outside the $j$th row and column to zero. Repeating these observations
for linear combinations of the rows and columns we see that the
$3$-dimensional isotropic subspaces of $G$ are parametrised by
$\PP^3 \sqcup (\PP^3)^\vee$.

Following \cite[Section 19.1]{FH}, the subgroup
$\PO(G) \subset \PGL_6$ preserving $G$ up to scalars sits in an exact
sequence
\[ 0 \to \PSO(G) \to \PO(G) \to \{ \pm 1\} \to 0 \] where
$\PGL_4 \isom \PSO(G)$ via $P \mapsto \wedge^2 P$, and $\{ \pm 1\}$
swaps over the two families of isotropic subspaces.  It follows that
our procedure (noting in particular the condition on the determinant
of the change of coordinates) associates to each Selmer group element
$(\xi,m)$ a unique equivalence class of models.

\begin{Remark}
  \label{rem:obs}
  If we started with an arbitrary pair $(\xi,m)$
  in~\eqref{xi-m-pairs}, not necessarily a Selmer group element, then
  the $3$-dimensional isotropic subspaces of $G$ would be parametrised
  by $S \sqcup S^\vee$, where $S$ is a $3$-dimensional Brauer-Severi
  variety, and $S^\vee$ is its dual. The obstruction map
  $H^1(k,\J[2]) \to \Br(k)$, as defined in \cite[Section~5]{ob}, is
  then realised by $(\xi,m) \mapsto [S]$.
\end{Remark}

\subsection{Recovering the Selmer group element}
\label{sec:recover}
We explain how to recover a Selmer group element $(\xi,m)$ from a
model $(G,H)$. This not only helps us check that the models we
computed in the last section are correct, but is also useful for
testing equivalence of models, and for realising the group law.

As in Section~\ref{sec:desing} the matrix $\theta \bG - \bH$ has
rank~$5$.  We pick a vector spanning its kernel, and rewrite it
using~\eqref{def:ZW} as a $4 \times 4$ skew symmetric matrix $A$ with
entries in $L = k[x]/(f) = k[\theta]$. We claim that
\begin{equation}
  \label{xi-Pf}
  \xi \equiv \Pf(A)/f'(\theta) \mod{k^\times (L^\times)^2}.
\end{equation}
In the untwisted case (i.e., when $\xi=1$) this claim follows
by~\eqref{skew-symm}, \eqref{Pf0} and~\eqref{Phi0}.  It is also easy
to see that the class~\eqref{xi-Pf} only depends on the equivalence
class of $(G,H)$. Indeed picking a different basis for the kernel of
$\theta \bG - \bH$ has the effect of multiplying $A$ by an element of
$L^\times$ and hence $\Pf(A)$ by an element of $(L^\times)^2$.
By~\eqref{pf-det} the only further effect of replacing $(G,H)$ by an
equivalent model is to multiply $\Pf(A)$ by an element of $k^\times$.

If we take $\xi = \Pf(A)/f'(\theta)$ then the matrix $A$ also
determines a square root $m$ of $N_{L/k}(\xi)$ via the formula
\begin{equation}
\label{getm}
A_1^* A_2 A^*_3 A_4 A_5^* A_6
= m f_6^3 \prod_{i<j} (\theta_i-\theta_j) I_4,
\end{equation}
which generalises~\eqref{eqn:prod0}.

Proposition~\ref{lem:viadiff}(iii) shows that the class of $\xi$ in
$L^\times/k^\times (L^\times)^2$ is given by evaluating the quadratic
form $G$ at a vector in the kernel of $\theta \bG - \bH$, and then
dividing by $f'(\theta)$. Since in computing a model we change
coordinates so that $G$ becomes the Pfaffian, this
proves~\eqref{xi-Pf}. Following this proof gives another formula for
$m$, which improves on~\eqref{getm} in that it avoids the need for
field extensions.  Indeed we may take $m = (\det N)/(2 f_6)^3$ where
$N$ is the $6 \times 6$ matrix that writes the entries of $A$ above
the diagonal in terms of the basis $1, \theta, \ldots, \theta^5$ for
$L$.

\begin{Remark}
\label{addc}
If we change $(G,H)$ by reversing the coordinates $u_0, \ldots, u_5$
then $A$ is replaced by $A^*$. Since $\Pf(A) = \Pf(A^*)$ this does not
change $\xi$, but using~\eqref{MT} and~\eqref{getm}, or the formula in
the last paragraph, it does change the sign of $m$.  This in turn
corresponds to adding the canonical element $c$ (as defined in
Section~\ref{sec:sel}).
\end{Remark}

The following alternative way to recover a Selmer group element
$(\xi,m)$ from a model $(G,H)$ will be useful in
Sections~\ref{sec:tw-kum} and~\ref{sec:get222}.

\begin{Lemma}
\label{lem:XiM}
Let $(G,H)$ be a model and let $Q_0, \ldots, Q_5$ be the quadratic
forms defined by the formula in Proposition~\ref{lem:viadiff}(i).
Then the forms
\[ \Xi(u_0, \ldots, u_5) = \sum_{j=0}^5 Q_j( u_0, \ldots, u_5)
  \theta^j \,\,\,\, \text{ and } \,\,\,\, M(u_0, \ldots, u_5) =
  \frac{1}{2^6} \, \frac{\partial(Q_0, \ldots,
    Q_{5})}{\partial(u_0,\ldots,u_{5})} \] satisfy
$N_{L/k}(\Xi) = M^2$. Moreover specialising these forms at any
$a \in k^6$ with $M(a) \not= 0$ gives a pair $(\xi, m)$ representing
the equivalence class of the model.
\end{Lemma}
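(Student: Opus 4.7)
Plan: The first assertion, $N_{L/k}(\Xi) = M^2$, should follow directly from Proposition~\ref{lem:viadiff}(iv) applied with $n = 6$. The identity there is invariant under change of basis on $k^n$—both sides of $M^2 = (-1)^{\binom{n}{2}} \det \bG \cdot N_{L/k}(\Xi)$ scale by $(\det P)^2$ under $\bu \mapsto P^{-1}\bu$—so it continues to hold when $\bG,\bH$ come from the model rather than from the canonical basis of Proposition~\ref{lem:viadiff}. Since every model has $G = u_0 u_5 + u_1 u_4 + u_2 u_3$, the associated symmetric matrix $\bG$ is the anti-diagonal permutation matrix and $\det \bG = -1$. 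Combined with $(-1)^{\binom{6}{2}} = -1$, the two signs cancel and~(iv) reduces to $N_{L/k}(\Xi) = M^2$.

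For the second assertion, I would unwind the model construction of Section~\ref{sec:models} to expose how $\Xi$ factorises. The model $(G,H)$ is obtained from a Selmer representative $(\xi_{sel}, m_{sel})$ by defining $Q_j^{sel}$ via~\eqref{def:Q} (so that $\sum_j Q_j^{sel} \theta^j = \xi_{sel}\,\ell(\bu_{sel})^2$ with $\ell(\bu_{sel}) = u_{sel,0} + u_{sel,1}\theta + \ldots + u_{sel,5}\theta^5$) and then applying a change of basis $\bu = P\bu_{sel}$ putting $G$ in standard form, the sign being chosen so that $\det P = m_{sel}$. A direct calculation shows that the formula in Proposition~\ref{lem:viadiff}(i) gives $Q_j^{model}(\bu) = Q_j^{sel}(P^{-1}\bu)$, so setting $\widetilde{\ell}(\bu) = \ell(P^{-1}\bu)$ we obtain $\Xi(\bu) = \xi_{sel}\,\widetilde{\ell}(\bu)^2$. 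For the $M$-component I would reuse the Jacobian identity $\partial(Q_0^{sel},\ldots,Q_5^{sel})/\partial(u_{sel,0},\ldots,u_{sel,5}) = N_{L/k}(2 \xi_{sel}\ell(\bu_{sel}))$ from the proof of Proposition~\ref{lem:viadiff}(iv); applying the chain rule contributes a factor of $\det P^{-1} = m_{sel}^{-1}$, and together with $N_{L/k}(\xi_{sel}) = m_{sel}^2$ this yields $M(\bu) = m_{sel}\,N_{L/k}(\widetilde{\ell}(\bu))$.

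Specialising at any $a$ with $M(a) \neq 0$—equivalently $\widetilde{\ell}(a) \in L^\times$—we then obtain $(\Xi(a), M(a)) = (\xi_{sel}\,\widetilde{\ell}(a)^2,\, m_{sel}\, N_{L/k}(\widetilde{\ell}(a)))$. Taking $r = 1$ and $\nu = \widetilde{\ell}(a)$ in the equivalence relation on the right of~\eqref{xi-m-pairs} identifies this with $(\xi_{sel}, m_{sel})$, so $(\Xi(a), M(a))$ represents the Selmer class of the model. The step I expect to require the most care is the sign bookkeeping: since $(\xi,m)$ and $(\xi,-m)$ are in general \emph{not} equivalent in~\eqref{xi-m-pairs}, the Jacobian must produce $+m_{sel}$ rather than $-m_{sel}$, and this depends precisely on the normalisation $\det P = +m_{sel}$ fixed in Section~\ref{sec:models}. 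A useful independent check is provided by the alternative formula $m = (\det N)/(2 f_6)^3$ recorded just before the lemma.
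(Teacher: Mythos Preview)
Your argument is correct and follows the same route as the paper's proof. For the first statement the paper does exactly what you do: observe that Proposition~\ref{lem:viadiff}(iv) is invariant under change of coordinates, and that $\det\bG=-1$ for the standard $G$. For the second statement the paper simply writes ``follows by~\eqref{def:Q}'', which is precisely your observation that after the change of basis $P$ one has $\Xi(\bu)=\xi_{\mathrm{sel}}\,\widetilde{\ell}(\bu)^2$; you have additionally spelled out the chain-rule computation pinning down $M(\bu)=m_{\mathrm{sel}}\,N_{L/k}(\widetilde{\ell}(\bu))$ with the correct sign, a point the paper leaves implicit.
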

\begin{proof}
  The first statement is proved by checking that
  Proposition~\ref{lem:viadiff}(iv) is invariant under all changes of
  coordinates, and then noting that for $G$ in the standard form (as
  specified in Definition~\ref{def:model}(i)) we have $\det \bG =
  -1$. The second statement follows by~\eqref{def:Q}.
\end{proof}

\subsection{Twisted Kummer surfaces}
\label{sec:tw-kum}

We may also represent elements of the $2$-Selmer group as
$2$-coverings.  In fact any $\eps \in H^1(k,\J[2])$ may be represented
by a $2$-covering $(\J_\eps,\pi_\eps)$. By definition this fits in a
commutative diagram
\begin{equation}  
\label{2cov}
\begin{aligned}  
  \xymatrix{ \J_\eps \ar[d]_{\phi_\eps} \ar[dr]^{\pi_\eps} \\
    \J \ar[r]_{\times 2} & \J }
\end{aligned}
\end{equation}
where $\phi_\eps$ is an isomorphism defined over $\kbar$.  We
emphasise that $\phi_\eps$ is not part of the data defining the
$2$-covering. Indeed we are free to change it by composing with
translation by $T$ for any $T \in \J[2]$.

We give $\J_\eps$ the structure of principal homogeneous space under
$\J$ via
\begin{equation}
  \label{phs}
  \begin{aligned} \J \times \J_\eps & \to \J_\eps \\ 
    (P,Q) & \mapsto \phi_\eps^{-1} ( P + \phi_\eps(Q)).
  \end{aligned}
\end{equation}
This action is independent of the choice of $\phi_\eps$ and hence
defined over $k$. The natural map $H^1(k,\J[2]) \to H^1(k,\J)$ is then
realised by $(\J_\eps,\pi_\eps) \mapsto \J_\eps$.
  
The isomorphism $\phi_\eps$ identifies the map $[-1]$ on $\J$ with an
involution $\iota_\eps$ on $\J_\eps$. Again it may be checked that
$\iota_\eps = \phi_\eps^{-1} \circ [-1] \circ \phi_\eps$ is
independent of the choice of $\phi_\eps$ and hence defined over $k$.

We have $\J/[-1] \isom \K \subset \PP^3$, where $\K$ is the Kummer
surface as described in Section~\ref{sec:kum}.  Twisting by
$\eps \in S^{(2)}(\J/k)$ gives
$\J_\eps/\iota_\eps \isom \K_\eps \subset \PP^3$ where $\K_\eps$ is a
twisted Kummer surface. (To recap Remark~\ref{rem:obs}: For general
$\eps \in H^1(k,\J[2])$ we might need to replace $\PP^3$ by a
Brauer-Severi variety $S$, but for Selmer group elements the
local-global principle for the Brauer group shows that
$S \isom \PP^3$.)

We record this set-up in a commutative diagram
\begin{equation}
\label{twist:J->K}
\begin{aligned}
 \xymatrix{ \J_\eps \ar[r] \ar[d]_{\phi_\eps} & \ar[d]^{\psi_\eps} \K_\eps \\
   \J \ar[r]^-{|2 \Theta|} & \K }
 \end{aligned}
\end{equation}
where $\phi_\eps$ is as in~\eqref{2cov} and
$\psi_\eps \in \GL_4(\kbar)$.  The divisor $H_\eps$ on $\J_\eps$
obtained by pulling back a hyperplane section on
$\K_\eps \subset \PP^3$ therefore satisfies
\begin{equation}
  \label{H-theta}
   H_\eps \sim \phi_\eps^*( 2 \Theta)
\end{equation}
where $\Theta$ is the theta divisor on $\J$.

In \cite[Chapter 3]{JialiThesis} the second author gave two different
methods for computing the twisted Kummer surface $\K_\eps$, both of
which worked by first computing the change of coordinates $\psi_\eps$.
This is typically defined over a degree $16$ number field.  Using the
models introduced in Section~\ref{sec:models} we are now able to
compute an equation for $\K_\eps$ without making any field extensions.
We start by finding a model $(G,H)$ that represents $\eps$.  We then
put $B = \Lambda \bH \Lambda^T$ where $\Lambda$ is defined
by~\eqref{def:Lambda}.  Then, exactly as in Section~\ref{sec:desing},
a quartic form defining $\K_\eps \subset \PP^3$ is obtained as the GCD
of the $3 \times 3$ minors of $B$. Moreover all the formulae we gave
in that section for $\K$, $\K^\vee$, $\dKum$, $\J$ and the maps
between them carry over immediately to their twisted counterparts
$\K_\eps$, $(\K_\eps)^\vee$, $\dKum_\eps$ and $\J_\eps$.  In
particular the double cover $\J_\eps \to \K_\eps$ is defined by
setting any leading $2 \times 2$ minor of $B$ equal to $-f_6$ times a
square.  As mentioned in the introduction, we find these formulae more
convenient to use than those in~\cite{2-coverings}.

\begin{Remark}
  \label{rem:MT}
  The model $(G,H)$ that represents $\eps$ also determines a
  $4 \times 4$ skew symmetric matrix $A = A_\eps$ as in
  Section~\ref{sec:recover}.  Just as in the untwisted case (see
  Section~\ref{sec:kum}) this matrix defines an isomorphism
  $\K_\eps \to (\K_\eps)^\vee$. In the context of testing equivalence
  of models, these matrices help us solve for the corresponding
  transformation when the models are equivalent.  This is analogous to
  the treatment of $3$-coverings of elliptic curves
  in~\cite{testeqtc}.  The matrices $A_\eps$ also give, by means of
  the formula~\eqref{MT}, the matrices $M_T$ describing the action of
  $T \in \J[2]$ on $\K_\eps \subset \PP^3$.  As explained in
  Remark~\ref{rem:redn} below, this is useful for reduction.
\end{Remark}

\begin{Remark}
  Let $c \in S^{(2)}(\J/k)$ be the canonical element defined in
  Section~\ref{sec:sel}. Then continuing with Remark~\ref{addc} we see
  that $(\K_\eps)^\vee \isom \K_{\eps + c}$.
\end{Remark}

The covering map $\pi_\eps : \J_\eps \to \J$ in \eqref{2cov} induces a
map $\K_\eps \to \K$, which we also call the covering map.  We now
explain how to compute this map explicitly.  Starting with a model
$(G,H)$ we use the formula in Proposition~\ref{lem:viadiff}(i) to
define quadratic forms $Q_0, \ldots, Q_5$. Then Lemma~\ref{lem:XiM}
gives a formula for the covering map from the twisted desingularised
Kummer $\dKum_\eps = \{ Q_3 = Q_4 = Q_5 = 0 \} \subset \PP^5$ to the
Kummer surface $\K$ written as a double cover of $\PP^2$ as in
\eqref{double_cover}.

We modify this to give the covering map $\K_\eps \to \K$ where both
surfaces are quartic surfaces in $\PP^3$. As suggested by
Remark~\ref{rem:Kmaps}(iii) our construction involves the $2 \times 2$
minors of $B = \Lambda \bH \Lambda^T$.  For
$Q = \sum c_{ijkl} z_{ij} z_{kl}$ a quadratic form we put
\[ Q \star H = \sum c_{ijkl} (B_{ik}B_{jl} - B_{il} B_{jk}). \]
Then we define quartic forms
$E_r = \adj( \Lambda \bG ( \bG^{-1} \bH )^r \Lambda^T)_{44}/x_4^2$
and
\begin{align*}
F_0 &= (2f_6)^{-1} E_1, \\
F_1 &= Q_2 \star H - 2 f_4 F_0, \\
F_2 &= -Q_1 \star H + f_3 F_0 = - E_2 - f_3 F_0, \\
F_3 &= Q_0 \star H, \\
F_4 &= f_6 E_3 - (f_2 Q_2 - f_3 Q_1 + f_4 Q_0) \star H 
              -( f_1 f_5 - 4 f_2 f_4 + 2 f_3^2) F_0.
\end{align*}

The quartic forms $F_i = F_i(H)$ are covariants of the model $H$.  By
this we mean that for some integers $d_0, \ldots, d_4$ we have
\[ F_i (\lambda H \circ \wedge^2 P) = ( \lambda \det P)^{d_i} F_i(H)
\circ P \]
for all $\lambda \in k^\times$ and $P \in \GL_4(k)$.
In fact $(d_0, \ldots, d_4) = (3,5,6,7,9)$. The next lemma
shows that these covariants define the covering map.
This generalises an observation of Weil~\cite{Weil-Hermite} in
the case of $2$-coverings of elliptic curves.

\begin{Lemma} 
\label{lem:quartics}
Let $\eps \in S^{(2)}(\J/k)$ be represented by a model $(G,H)$ and
let the $F_i = F_i(H)$ be the quartic polynomials defined above.
\begin{enumerate}
\item $\K_\eps \subset \PP^3$ has equation $F_0 = 0$.
\item Let the matrices $M_T$ be as defined in Remark~\ref{rem:MT}.
Then $F_0,\ldots,F_4$ are a basis for the space of
Heisenberg invariant quartics
\[ \qquad \{ F \in k[x_1, \ldots,x_4]_4 : F \circ M_T = \det(M_T) F 
  \text{ for all } T \in \J[2] \}. \]
\item The covering map $\K_\eps \to \K$ is given by
\[ (x_1: \ldots: x_4) \mapsto (F_1: \ldots: F_4). \]
\end{enumerate}
\end{Lemma}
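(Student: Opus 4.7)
Part~(i) follows directly from formula~\eqref{getF} transferred to the twisted setting, as already advertised in Section~\ref{sec:tw-kum}. The argument producing~\eqref{getF} carries over verbatim to any model $(G,H)$ representing $\eps$: writing $B = \Lambda \bH \Lambda^T$, one obtains $-2 f_6 \adj(B) = F_\eps \x \x^T$, where $F_\eps$ is the defining quartic of $\K_\eps \subset \PP^3$. Extracting the $(4,4)$-entry and dividing by $x_4^2$ gives $E_1 = -F_\eps/(2f_6)$, so $F_0 = (2f_6)^{-1} E_1$ is a nonzero scalar multiple of $F_\eps$.

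For part~(iii), the plan is to factor the covering map as $\K_\eps \to \dKum_\eps \to \K$. The first map is given by the rows of the $6 \times 6$ symmetric matrix of $2 \times 2$ minors of $B$, as in the twisted analogue of Remark~\ref{rem:Kmaps}(iii); on quadratic forms in the $u_i$ this is exactly the operation $Q \mapsto Q \star H$. The second map sends a point of $\dKum_\eps$ first to $(Q_2 : -Q_1 : Q_0) \in \PP^2$, in accordance with the Cassels map identification $\Xi = x_3 - x_2 \theta + x_1 \theta^2$, and then lifts to $\K \subset \PP^3$ via the double cover~\eqref{double_cover}. Composing, the first three coordinates of the image work out to $(Q_2 \star H : -Q_1 \star H : Q_0 \star H)$, which coincide with $(F_1:F_2:F_3)$ modulo $F_0$ by the definitions. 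The fourth coordinate $x_4$ satisfies $P_2 x_4^2 - P_3 x_4 + P_4 = 0$, and the expression for $F_4$ in terms of $f_6 E_3$ together with the appropriate $Q_j \star H$ and $F_0$ corrections is tailored to extract the correct root.

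Part~(ii) can then be handled as follows. Heisenberg invariance of $F_0$ is immediate from~(i), and of $F_1,\ldots,F_4$ follows from~(iii), since the coordinates pulled back via the $\J[2]$-equivariant map $\pi_\eps$ are invariant up to the character $\det M_T$; alternatively one may verify invariance directly from the covariance of each $F_i$ under the $\PGL_4$-action on models (Definition~\ref{def:model}(ii)), combined with the transformation of $H$ under $\wedge^2 M_T$. Linear independence can be checked by specialising to a convenient model, for example the trivial twist $\eps = 0$, where the $F_i$ reduce to the classical duplication formulae on $\K$ and are manifestly independent. The dimension of the space of Heisenberg invariant quartics is a standard fact from the representation theory of $H^0(\J, 4\Theta)$, equivalently from the analysis of the duplication map in~\cite[Chapter~3]{CF}: it is exactly $5$, so $F_0,\ldots,F_4$ form a basis.

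The main obstacle will be the explicit verification of the formula for~$F_4$. The $d$-coordinate of the image in $\K \subset \PP^3$ involves a square root (of the discriminant $P_3^2 - 4 P_2 P_4$), and expressing it as a quartic in the coordinates of $\K_\eps$ requires careful tracking through the intermediate desingularised Kummer. The combination $f_6 E_3 - (f_2 Q_2 - f_3 Q_1 + f_4 Q_0) \star H - (f_1 f_5 - 4 f_2 f_4 + 2 f_3^2) F_0$ encodes this extraction, with each correction term designed to eliminate spurious multiples of $F_0$. Establishing the precise constants is a routine but intricate calculation in the invariant theory of pairs of quadratic forms on $\PP^5$, most easily handled by direct symbolic computation backed up by small examples.
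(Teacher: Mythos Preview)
Your approach is sound but differs from the paper's. The paper's proof is much shorter: since the $F_i$ are covariants of the model (in the sense stated just before the lemma), one may extend the base field and reduce to the specific untwisted model $H$ of Section~\ref{sec:desing}. There (i) was already verified; for (ii) linear independence is checked by computer algebra and the dimension count appeals to Lemmas~\ref{lem:heis} and~\ref{lem-heis}; and for (iii) one checks (again by computer algebra, recorded in the proof of Proposition~\ref{prop:quartic-factors}) that in the untwisted case $F_0,\dots,F_4$ coincide up to an overall scalar with $2F,\, 2\Phi_{14}(\x,\x),\, 2\Phi_{24}(\x,\x),\, 2\Phi_{34}(\x,\x),\, \Phi_{44}(\x,\x)$, i.e.\ with Flynn's duplication formulae on~$\K$.

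Your route for (iii), factoring $\K_\eps \to \dKum_\eps \to \K$ and reading off the first three coordinates of the image as $(Q_2 \star H : -Q_1 \star H : Q_0 \star H)$ via the Cassels map, is correct and has the merit of explaining conceptually why the $Q_j \star H$ terms appear in the definitions of $F_1, F_2, F_3$. But you rightly flag that pinning down $F_4$ this way is the hard part, and your proposed resolution (direct symbolic computation) is no easier than the paper's strategy of reducing via covariance to a single explicit model. That reduction buys a uniform treatment: all three parts become computer-algebra verifications against already-known formulae, and in particular one avoids tracking the square-root extraction for the fourth coordinate in the general twisted setting.
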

\begin{proof}
  Since we are free to extend our field, and the $F_i$ are covariants,
  it suffices to prove this for $H$ as given in
  Section~\ref{sec:desing}.  We already checked (i) in
  Section~\ref{sec:desing}. For (ii) we checked by computer algebra
  that $F_0, \ldots, F_4$ are linearly independent and then used
  Lemmas~\ref{lem:heis} and~\ref{lem-heis} below. See
  Proposition~\ref{prop:quartic-factors} below for further details of
  the proof of (iii).
\end{proof}

\begin{Remark}
  For the purposes of Lemma~\ref{lem:quartics} it would not make any
  difference if we changed $F_1, \ldots, F_4$ by adding multiples of
  $F_0$.  The reason for making the choices we did is to simplify the
  statement of Proposition~\ref{prop:quartic-factors} below.
\end{Remark}


\section{The Cassels-Tate pairing on the $2$-Selmer group}
\label{sec:CTP}

Let $\A$ be an abelian variety defined over a number field $k$.  The
Cassels-Tate pairing is a bilinear map
\begin{equation}
\label{CTP1}
   \Sha(\A/k) \times \Sha(\A^\vee/k) \to \Q/\Z.
\end{equation}
It has the property that for any integer $n \geqslant 2$ the image of
multiplication-by-$n$ on $\Sha(\A/k)$ and kernel of
multiplication-by-$n$ on $\Sha(\A^\vee/k)$ are exact annihilators.
This follows from \cite[Chapter I, Theorem 6.13]{Milne} or more
directly from the injectivity of the middle vertical map in the
diagram on page 88 of \cite{Milne}. Flach \cite{Flach} showed that if
we identify $\A = \A^\vee$ using a principal polarisation, then the
pairing is skew-symmetric.

We take $\A = \J$ a genus $2$ Jacobian, and identify $\J = \J^\vee$
using the principal polarisation
\begin{equation}
\label{pp}
X \mapsto [\tau_X^* \Theta - \Theta].
\end{equation}
By composing with the map $S^{(2)}(\J/k) \to \Sha(\J/k)[2]$
in~\eqref{exseq-2desc}, the pairing~\eqref{CTP1} lifts to a pairing of
$\F_2$-vector spaces
\begin{equation}
\label{ctp2}
 \langle~,~\rangle_\CT : S^{(2)}(\J/k) \times S^{(2)}(\J/k) \to \F_2.
\end{equation}
Specialising the results cited in the last paragraph, we see that this
pairing is symmetric and bilinear and, by chasing around the
diagram~\eqref{commdiag1}, has kernel the image of the natural map
$S^{(4)}(\J/k) \to S^{(2)}(\J/k)$. The pairing is not in general
alternating, but rather has the following properties.
\begin{Lemma}[Poonen and Stoll]
\label{lem:PS}
Let $c \in S^{(2)}(\J/k)$ be the canonical element as defined in
Section~\ref{sec:sel}. Then
\begin{enumerate}
\item $\langle x,x+c \rangle_\CT = 0$ for all $x \in S^{(2)}(\J/k)$. 
\item $\langle c,c \rangle_\CT = 0$ if and only if the number of
  deficient places is even, where a place $v$ is {\em deficient}
  if $\C$ has no $k_v$-rational divisor of degree $1$.
\item The rank of the pairing~$\langle ~,~ \rangle_\CT$  
  is even if and only if $\langle c,c \rangle_\CT = 0$.
\end{enumerate}
\end{Lemma}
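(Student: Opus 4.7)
The plan is to deduce (i) and (ii) from the work of Poonen and Stoll \cite{PS}, and to derive (iii) from (i) by a short linear algebra argument.

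For (i), the key input is the Poonen--Stoll identity $\langle x, x \rangle_\CT = \langle x, c \rangle_\CT$, valid for every $x \in S^{(2)}(\J/k)$, which expresses the failure of the Cassels--Tate pairing to be alternating when the principal polarization on $\J$ is not induced by a symmetric line bundle. Granting this identity, the claim follows from bilinearity and symmetry of the pairing: $\langle x, x+c \rangle_\CT = \langle x, x \rangle_\CT + \langle x, c \rangle_\CT = 2 \langle x, c \rangle_\CT = 0$ in $\F_2$.

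For (ii), the strategy is to unfold $\langle c, c \rangle_\CT$ at the cocycle level, taking the $\J[2]$-torsor $\Pic^1 \C$ as an explicit representative of $c$ (compare Lemma~\ref{lemL}). This expresses $\langle c, c \rangle_\CT = \sum_v \inv_v(\xi_v)$ for certain local classes $\xi_v \in \Br(k_v)[2]$. The main step, carried out by Poonen and Stoll, is the local identification: one shows $\inv_v(\xi_v) = \tfrac{1}{2}$ precisely when $\C$ admits no $k_v$-rational divisor class of degree $1$, i.e., when $v$ is deficient. The hard part of the argument is this local computation, which rests on Tate local duality and the self-duality of $\J[2]$ under the Weil pairing. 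Summing over $v$ then yields (ii).

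Part (iii) is a linear algebra consequence of (i). Over $\F_2$, the map $q(x) := \langle x, x \rangle_\CT$ is $\F_2$-linear, since $q(x+y) - q(x) - q(y) = 2 \langle x, y \rangle_\CT = 0$. Let $R$ be the radical of the pairing, so that the rank equals $r := \dim_{\F_2}(V/R)$ with $V := S^{(2)}(\J/k)$, and the pairing descends to a nondegenerate symmetric form on $V/R$. By (i), $q(x) = \langle x, c \rangle_\CT$, so $\bar c \in V/R$ is the unique element representing $\bar q$ via the induced nondegenerate pairing. Invoking the classification of nondegenerate symmetric bilinear forms over $\F_2$ (each is either the hyperbolic form, which exists only in even dimension, or the diagonal form $I_r$), one verifies in each normal form that the element $w$ representing the diagonal functional satisfies $\langle w, w \rangle \equiv r \pmod{2}$. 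Applied to $w = \bar c$, this yields $\langle c, c \rangle_\CT \equiv r \pmod{2}$, so the rank is even if and only if $\langle c, c \rangle_\CT = 0$.
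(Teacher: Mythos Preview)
Your proposal is correct and follows essentially the same approach as the paper: parts (i) and (ii) are attributed to Poonen and Stoll \cite[Theorem~5 and Corollary~12]{PS}, and part (iii) is deduced from (i) by the linear algebra argument you describe (which the paper leaves to the reader). Your classification-based verification that the element $w$ representing the diagonal functional of a nondegenerate symmetric form over $\F_2$ satisfies $\langle w,w\rangle \equiv r \pmod 2$ is a valid way to carry out that linear algebra, though one can also argue more directly by splitting off $\langle \bar c\rangle$ or a hyperbolic plane containing $\bar c$ and observing that the complement is alternating.
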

\begin{proof} (i) and (ii). See~\cite[Theorem~5 and Corollary~12]{PS}.
  Note that Poonen and Stoll
  write $c$ for what in our notation is the image of $c$ in $\Sha(\J/k)$. \\
  (iii) This follows by (i) and linear algebra.
\end{proof}
One consequence of Lemma~\ref{lem:PS} is that if the number of
deficient places is odd then the upper bound for $\rank \, \J(k)$
coming from $2$-descent improves by~$1$, without even having to
compute the pairing. Our focus in Section~\ref{sec:examples} is
therefore in giving examples where the rank bound improves by $2$ or
more.

\begin{Remark}
\label{rem:creutz}
Further partial information about the pairing is given by work of
Creutz \cite{Creutz} who (assuming there are no deficient places) gave
a criterion for whether the canonical element $c$ is in the kernel of
the pairing, equivalently, for whether the pairing on $S^{(2)}(\J/k)$
is alternating.
\end{Remark}

\begin{Remark} More generally, fixing a prime $p$, we may write $S_m$ for
  the image of the $p^m$-Selmer group in the $p$-Selmer group.  Then,
  exactly following Cassels' treatment~\cite{CasselsIII} in the case
  of elliptic curves, we have inclusions of $\F_p$-vector spaces
  \[\J(k)/2\J(k) \subset \ldots \subset S_3 \subset S_2 \subset S_1
    = S^{(p)}(\J/k)\]
  and a skew-symmetric pairing on each $S_m$ with kernel $S_{m+1}$.
  It is interesting to note (paraphrasing the work of
  Poonen and Stoll) that all of these pairings are alternating,
  except for the one we compute in this paper, that is, the pairing
  on $S_1$ when $p=2$. In particular, assuming finiteness
  of $\Sha(\J/k)[2^\infty]$, the rank bounds we compute (by a combination
  of $2$-descent and computing the pairing~\eqref{ctp2})
  always have the same parity as the true rank.
\end{Remark}

In Section~\ref{sec:def-ctp} we briefly review the definition of the
Cassels-Tate pairing~\eqref{CTP1} that is relevant to our work.  Then
in Section~\ref{sec:formula} we give our new formula for the
pairing~\eqref{ctp2}.  In outline it works as follows.  Given
$\eps,\eta \in S^{(2)}(\J/k)$ we compute equations for the twisted
Kummer surfaces $\K_\eps \subset \PP^3$ and $\K_\eta \subset \PP^3$.
If we can find a $k$-point on $\K_\eta$ then the image of $\eta$ in
$\Sha(\J/k)$ splits over a quadratic extension.  We can then represent
$\eta$ as a Brauer class on $\K_\eps$ split by the same quadratic
extension. We evaluate this Brauer class at suitable local points on
$\K_\eps$, and then compute $\langle \eps,\eta \rangle_{\CT}$ as the
sum of the local invariants.

To turn this formula into a practical method for computing the
pairing, we must compute a rational function on $\K_\eps$ that
represents the required Brauer class.  In the case of elliptic curves,
the first author's solution to this problem~\cite{bq-ctp}, was based
on the well known formulae relating the $x$-coordinates of the points
$P, Q, P+Q, P-Q$ on an elliptic curve.  The analogue of these formulae
in genus~$2$ are the biquadratic forms computed by
Flynn~\cite{biquadratic}. These describe remnants of the group law on
$\J$ on the Kummer surface $\K = \J/[-1]$.  This leads in
Section~\ref{sec:applic} to the definition of a $(2,2,2)$-form on
$\PP^3 \times \PP^3 \times (\PP^3)^\vee$, a twisted version of which
specialises to the rational function we need.  In
Sections~\ref{sec:heis}--\ref{sec:get222} we give equations for the
twisted $(2,2,2)$-form in terms of our invariant-theoretic formulae
for the $2$-covering map.  Interestingly, we discover that the
coefficients of the original $(2,2,2)$-form may be viewed as the
structure constants for an \'etale algebra.

\subsection{Definition of the Cassels-Tate pairing}
\label{sec:def-ctp}
We briefly review the homogeneous space definition
of the Cassels-Tate pairing~\eqref{CTP1}, following Poonen
and Stoll \cite{PS} and Milne \cite[Chapter 1, Remark 6.11]{Milne}.

We start with $a \in \Sha(\A/k)$ and $b \in \Sha(\A^\vee/k)$.  Let $X$
be a (locally trivial) principal homogeneous space over $k$
representing $a$. When we write $\Div^0(X)$ and $\Pic^0(X)$ we shall
mean these groups for $X$ over $\kbar$. Since $\Pic^0(X)$ is
canonically isomorphic as a $\Gal(\kbar/k)$-module to
$\Pic^0(\A) = \A^\vee(\kbar)$, the element
$b \in \Sha(\A^\vee/k) \subset H^1(k,\A^\vee)$ corresponds to an
element of $H^1(k,\Pic^0(X))$.

The short exact sequence of Galois modules
\[ 0 \to \kbar(X)^\times/\kbar^\times \to \Div^0(X) \to \Pic^0(X) \to
  0, \] induces a long exact sequence of Galois cohomology
\begin{equation}
\label{les1}
 \ldots \to H^1(k,\Pic^0(X)) \to H^2(k,\kbar(X)^\times/\kbar^\times)
 \to \ldots
\end{equation}
Let $b' \in H^2(k,\kbar(X)^\times/\kbar^\times)$ be the image of $b$.
Next we consider the short exact sequence
\[0 \to \kbar^\times \to \kbar(X)^\times \to \kbar(X)/\kbar^\times \to 0,\]
and its associated long exact sequence
\begin{equation}
\label{les2}
 \ldots \to H^2(k,\kbar^\times) \to
    H^2(k,\kbar(X)^\times) \to H^2(k,\kbar(X)^\times/\kbar^\times)
    \to H^3(k,\kbar^\times) \to \ldots
\end{equation}  
Since $k$ is a number field we have $H^3(k,\kbar^\times) = 0$.
Therefore $b'$ lifts to some $f' \in H^2(k,\kbar(X)^\times)$.  In the
local analogue of~\eqref{les2}, each $f'_v$ maps to zero (since $b$ is
locally trivial) and so is the image of some
$c_v \in H^2(k_v,\kbar_v^\times) = \Br(k_v)$.  The Cassels-Tate
pairing is defined by
\[ (a,b) \mapsto \sum_v \inv_v(c_v) \in \Q/\Z. \]

\begin{Remark}
  \label{rem-def-ctp}
  (i) Although the sum is over all places $v$ of $k$, it may be shown
  that only finitely many places contribute and so the
  sum is in fact finite. \\
  (ii) We can compute $c_v$ by evaluating $f'_v$ at any local point
  $P_v \in X(k_v)$. Since $b$ is locally trivial, the choice of $P_v$
  does not matter. \\
  (iii) The pairing is independent of the choice of $f'$ since by
  class field theory the sum of the local invariants of an element in
  $\Br(k)$ is always zero.
\end{Remark}

\subsection{A formula for the Cassels-Tate pairing}
\label{sec:formula}
In this section we give our new formula for the pairing~\eqref{ctp2}.
Accordingly, we start with $\eps, \eta \in S^{(2)}(\J/k)$ and aim to
compute $\langle \eps,\eta \rangle_\CT$.  We write $\K_\eps$,
$\K_\eta$ and $\K_{\eps+\eta}$ for the corresponding twisted Kummer
surfaces.  Our method depends on finding a rational point
$P \in \K_\eta(k)$.  In general there is no guarantee that such a
point exists, but as discussed in the introduction this does not
appear to be a severe restriction in practice.  Since
$\J_\eta \to \K_\eta$ is a double cover, the point $P$ lifts to a
point defined over $k(\sqrt{a})$ for some $a \in k$. We may suppose
that $P$ is not a node, and that $a$ is not a square, otherwise
$\J_\eta$ is trivial as a homogeneous space, in which case $\eta$ has
trivial image in $\Sha(\J/k)$ and $\langle \eps,\eta \rangle_\CT = 0$.
As in Section~\ref{sec:tw-kum}, we have
$\K_\eps = \J_\eps/\iota_\eps$.

\begin{Theorem}
\label{thm:ctp-formula}
Suppose that $P \in \K_\eta(k)$ lifts to a point on $\J_\eta$ defined
over $k(\sqrt{a})$.  Let $H_\eps$ and $H_{\eps+\eta}$ be divisors on
$\J_\eps$ and $\J_{\eps+\eta}$ obtained by pulling back hyperplane
sections on $\J_\eps \to \K_\eps \subset \PP^3$ and
$\J_{\eps+\eta} \to \K_{\eps+\eta} \subset \PP^3$.  Then
\begin{enumerate}
\item The point $P$ determines an isomorphism
  $\phi : \J_\eps \to \J_{\eps + \eta}$ defined over~$k(\sqrt{a})$.
\item There is a $k$-rational function $g$ on $\J_\eps$ with divisor
  \begin{equation*}
 \divisor(g) = \phi^* H_{\eps + \eta} + \iota_\eps^*(\phi^* H_{\eps + \eta})
  - 2 H_\eps. \end{equation*}
\item 
The Cassels-Tate pairing \eqref{ctp2} is given by
\[ \langle \eps,\eta \rangle_\CT = \sum_v (a,g(P_v))_v \] where for
each place $v$ of $k$ we pick a local point $P_v \in \J_\eps(k_v)$,
avoiding the zeros and poles of $g$, and
$(~,~)_v: k_v^\times/(k_v^\times)^2 \times k_v^\times/(k_v^\times)^2
\to \F_2$ is the Hilbert norm residue symbol.
\end{enumerate}
\end{Theorem}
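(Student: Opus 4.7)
The plan is to follow the strategy of \cite{bq-ctp}, suitably generalised from $2$-coverings of elliptic curves to $2$-coverings of genus~$2$ Jacobians. The key geometric input is that a $k$-rational point $P$ on $\K_\eta$ that lifts to $\J_\eta$ over the quadratic extension $k(\sqrt{a})$ realises $\eta$ as a class split by that extension; this should let us compute the cohomological data in the homogeneous space definition of the Cassels-Tate pairing (Section~\ref{sec:def-ctp}) in purely geometric terms on $\J_\eps$.

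For part~(i), I take $Q \in \J_\eta(k(\sqrt{a}))$ lifting $P$, and using the fact that the sum of torsors satisfies $\J_\eps + \J_\eta = \J_{\eps + \eta}$, define $\phi(R) = R + Q$ by an extension of the action~\eqref{phs}. Over $k(\sqrt{a})$ this is a well-defined isomorphism of torsors. For part~(ii), I would work over $\kbar$ and identify $\J_\eps$ and $\J_{\eps+\eta}$ with $\J$ via $\kbar$-defined isomorphisms $\phi_\eps, \phi_{\eps+\eta}$ so that $H_\eps$ and $H_{\eps + \eta}$ correspond up to linear equivalence to $2 \Theta$ (by~\eqref{H-theta}), $\iota_\eps$ corresponds to $[-1]$, and $\phi$ corresponds to a translation $\tau_V$ for some $V \in \J(\kbar)$. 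The divisor then becomes $\tau_V^*(2 \Theta) + \tau_{-V}^*(2 \Theta) - 4 \Theta$, which is linearly equivalent to zero by the theorem of the square applied to the symmetric line bundle associated to $2 \Theta$. To pin down that the divisor of $g$ is $k$-rational (not merely its class), I would verify using $\sigma(Q) = \iota_\eta(Q)$ that $\sigma(\phi^* H_{\eps + \eta}) = \iota_\eps^*(\phi^* H_{\eps + \eta})$ as divisors on $\J_\eps$. Then $g$ is determined up to $k^\times$, since the space of global sections of the trivial line bundle is one-dimensional.

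The main content is part~(iii). I would apply the homogeneous space definition of Section~\ref{sec:def-ctp} with $X = \J_\eps$, so that $\eta$, viewed in $\Sha(\J^\vee/k)$ via the principal polarisation~\eqref{pp}, corresponds to a class in $H^1(k, \Pic^0(\J_\eps))$. The quadratic extension $k(\sqrt{a})$ splits $\eta$ at the level of torsors via $Q$, and I would argue that at the level of degree-$0$ divisors on $\J_\eps$ this splitting is realised by $\phi^* H_{\eps + \eta} - H_\eps$. Pushing forward into $H^2(k, \kbar(\J_\eps)^\times / \kbar^\times)$ via~\eqref{les1} and then lifting to $H^2(k, \kbar(\J_\eps)^\times)$ via~\eqref{les2}, the function $g$ of part~(ii) provides the necessary cochain: concretely one writes $g = h \cdot \iota_\eps^*(h)$ up to a $k^\times$-scalar, for a rational function $h$ on $\J_\eps$ defined over $k(\sqrt{a})$ with divisor $\phi^* H_{\eps + \eta} - H_\eps$, and the Hilbert~90-style defect between $h$ and its $\sigma$-conjugate produces, after restricting to a completion and evaluating at any local point $P_v \in \J_\eps(k_v)$ avoiding the support of $g$, precisely the Hilbert symbol $(a, g(P_v))_v \in \Br(k_v)[2]$. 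Summing the local invariants then yields $\langle \eps, \eta \rangle_{\CT}$, and independence of the choices of $Q$, $h$, and $P_v$ follows from standard properties of the pairing (see Remark~\ref{rem-def-ctp}).

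The main obstacle is the cohomological bookkeeping in part~(iii). The formula itself is easy to guess by analogy with the elliptic case, but the difficulty lies in tracking the lifts through the two long exact sequences~\eqref{les1} and~\eqref{les2} while keeping careful control of the identifications $\Pic^0(\J_\eps) \cong \J^\vee \cong \J$ (via the principal polarisation), so as to match the abstract class in $H^2(k, \kbar(\J_\eps)^\times)$ with the concrete $2$-cocycle built from $\sqrt{a}$ and $g$. A subsidiary subtlety is checking that the factor coming from the distinction between $\Theta$ and $2\Theta$ (and hence between the polarisations $\phi_\Theta$ and $\phi_{2\Theta}$) does not introduce an extra factor of $2$ in the Hilbert symbol.
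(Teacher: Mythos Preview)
Your outline for (i) and (ii) is sound and close to the paper's; the paper actually derives (ii) as a byproduct of the cocycle computation in (iii) rather than via the theorem of the square, but either route works.

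There is a genuine error in (iii): the function $h$ you introduce does not exist. Its supposed divisor $\phi^* H_{\eps+\eta} - H_\eps$ is not principal. Via $\phi_\eps$ it corresponds, up to linear equivalence, to $\tau_R^*(2\Theta) - 2\Theta$ on $\J$, where $R = \phi_\eta(Q)$; under the principal polarisation~\eqref{pp} this has class $2R \in \J^\vee$, and since $P$ is assumed not to be a node of $\K_\eta$ we have $R \notin \J[2]$, so $2R \neq 0$. Hence no rational function on $\J_\eps$ has this divisor, the factorisation $g = h \cdot \iota_\eps^*(h)$ is impossible, and the ``Hilbert~90-style defect'' you describe cannot be set up.

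The paper's proof instead computes the connecting map in~\eqref{les1} directly at the divisor level. One lifts the $1$-cocycle $\sigma \mapsto [\phi^* H_{\eps+\eta} - H_\eps]$ in $\Pic^0(\J_\eps)$ to the obvious $1$-cochain in $\Div^0(\J_\eps)$ and takes its coboundary: when $\sigma_1(\sqrt{a}) = \sigma_2(\sqrt{a}) = -\sqrt{a}$ this yields $\phi^* H_{\eps+\eta} + \sigma_1(\phi^* H_{\eps+\eta}) - 2H_\eps$. The identity $\sigma_1(\phi) = \iota_{\eps+\eta} \circ \phi \circ \iota_\eps$ (which you essentially noted in your argument for (ii)) shows this equals $\divisor(g)$. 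Since $g$ is already $k$-rational, the $2$-cochain $(\sigma_1,\sigma_2) \mapsto g$ lifts directly to $H^2(k,\kbar(\J_\eps)^\times)$ with no further choices, and evaluating at a local point $P_v$ gives the quaternion algebra $(a, g(P_v))$ by \cite[Chapter~XIV, Section~2, Proposition~5]{Serre-LF}. Your worry about an extra factor of $2$ from the polarisation is handled by the step $\tau_{2R}^*\Theta - \Theta \sim \tau_R^*(2\Theta) - 2\Theta$, which is just the statement that~\eqref{pp} is a group homomorphism.
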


\begin{Remark}
  \label{rem-g}
  (i) The theorem is only useful if we have a practical
  method for computing the rational function $g$.
  This is the subject of Sections~\ref{sec:applic}--\ref{sec:get222}. \\
  (ii) The scaling of $g$ is not important for
  the same reason as in Remark~\ref{rem-def-ctp}(iii). \\
  (iii) Since $g$ factors via $\K_\eps$ we may
  evaluate it at local points on $\K_\eps$, provided that we are
  careful to choose local points that can be lifted to local
  points on $\J_\eps$.
\end{Remark}

We start the proof of Theorem~\ref{thm:ctp-formula} by describing a
twisted form of the group law on $\J$.
\begin{Lemma}
\label{lem:twistedgplaw}
  Let $\eps,\eta \in H^1(k,\J[2])$. Let $\phi_\eps : \J_\eps \to \J$,
  $\phi_\eta : \J_\eta \to \J$ and
  $\phi_{\eps +\eta} : \J_{\eps + \eta} \to \J$ be isomorphisms
  defined over $\kbar$ making the diagram~\eqref{2cov} and its
  analogues for $\eta$ and $\eps + \eta$ commute.  Then, after
  possibly adjusting our choice of $\phi_{\eps +\eta}$, there is a
  morphism $\mu$ defined over $k$ making the following diagram commute
  \[ \xymatrix{ \J_\eps \,\,\, \times \,\,\, \J_\eta \ar[r]^-{\mu}
      \ar@<-1.5em>[d]^{\phi_\eps} \ar@<1.5em>[d]^{\phi_\eta}
      & \J_{\eps+\eta} \ar[d]^{\phi_{\eps+\eta}}  \\
      \J \,\,\, \times \,\,\, \J \ar[r]^-{+}
  &  \J \rlap{.} } \]
\end{Lemma}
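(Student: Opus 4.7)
The plan is to define $\mu$ over $\kbar$ by the square itself,
\[
\mu \,:=\, \phi_{\eps+\eta}^{-1} \circ (+) \circ (\phi_\eps \times \phi_\eta),
\]
and then to show that, after a suitable adjustment of $\phi_{\eps+\eta}$ by a translation by a $\kbar$-point of $\J[2]$, one has $\mu^\sigma = \mu$ for all $\sigma \in \Gal(\kbar/k)$. Galois descent then produces a $k$-morphism with the claimed commutativity built in by construction.

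First I would extract the cocycles hidden in the $\phi_\bullet$. Because $\pi_\eps = [2]\circ\phi_\eps$ and $\J_\eps$ are defined over $k$, for each $\sigma \in \Gal(\kbar/k)$ the $\kbar$-automorphism $\phi_\eps^\sigma \circ \phi_\eps^{-1}$ of $\J$ satisfies $[2] \circ (\phi_\eps^\sigma \circ \phi_\eps^{-1}) = [2]$, and therefore equals the translation $\tau_{T_\eps(\sigma)}$ by a unique element $T_\eps(\sigma) \in \J[2](\kbar)$. The standard cocycle check shows that $\sigma \mapsto T_\eps(\sigma)$ is a $1$-cocycle with class $\eps \in H^1(k,\J[2])$. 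Defining $T_\eta$ and $T_{\eps+\eta}$ in the same way yields $1$-cocycles whose classes are $\eta$ and $\eps+\eta$.

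Since $[T_{\eps+\eta}]$ and $[T_\eps + T_\eta]$ both equal $\eps+\eta$ in $H^1(k,\J[2])$, their difference is a coboundary $\sigma \mapsto \sigma(S) - S$ for some $S \in \J[2](\kbar)$. Replacing $\phi_{\eps+\eta}$ by $\tau_S \circ \phi_{\eps+\eta}$ preserves its defining property $[2]\circ\phi_{\eps+\eta} = \pi_{\eps+\eta}$ and shifts the associated cocycle by precisely $\sigma \mapsto \sigma(S) - S$, so after this one adjustment we may assume $T_{\eps+\eta} = T_\eps + T_\eta$ as cocycles.

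With this normalisation, Galois equivariance of $\mu$ is a one-line computation: using $\phi_\bullet^\sigma = \tau_{T_\bullet(\sigma)} \circ \phi_\bullet$ together with the intertwining identity $(+)\circ(\tau_A \times \tau_B) = \tau_{A+B}\circ(+)$ on $\J \times \J$, one gets
\[
\mu^\sigma \,=\, \phi_{\eps+\eta}^{-1} \circ \tau_{T_\eps(\sigma) + T_\eta(\sigma) - T_{\eps+\eta}(\sigma)} \circ (+) \circ (\phi_\eps \times \phi_\eta) \,=\, \mu.
\]
The main obstacle is really just cocycle bookkeeping: identifying the Galois twist of each $\phi_\bullet$ with an explicit cocycle in $\J[2]$, and recognising that the freedom in choosing $\phi_{\eps+\eta}$ is exactly the freedom to alter this cocycle by a coboundary. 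Once these two points are in hand the descent is automatic, and the commutativity of the diagram over $\kbar$, which holds tautologically from our definition of $\mu$, transfers to $k$.
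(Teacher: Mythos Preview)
Your proof is correct and is essentially the same as the paper's. Both extract the cocycles $\sigma\mapsto T_\bullet(\sigma)$ from the Galois twists of the $\phi_\bullet$, adjust $\phi_{\eps+\eta}$ by a translation in $\J[2]$ to force $T_{\eps+\eta}=T_\eps+T_\eta$ on the nose, and then verify Galois invariance of $\mu$; the only cosmetic difference is that the paper writes the final check pointwise on $P\in\J_\eps$, $Q\in\J_\eta$, whereas you phrase it as an identity of morphisms.
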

\begin{proof}
  We know that $\eps$ is represented by a cocycle
  $(\sigma \mapsto \eps_\sigma)$ where
  $\sigma(\phi_\eps) \phi_\eps^{-1}$ is translation by
  $\eps_\sigma \in \J[2]$.  The corresponding statements hold for
  $\eta$ and $\eps + \eta$. Since the cocycles
  $(\sigma \mapsto (\eps + \eta)_\sigma)$ and
  $(\sigma \mapsto \eps_\sigma + \eta_\sigma)$ differ by a coboundary,
  we may adjust our choice of $\phi_{\eps +\eta}$ so that
  $(\eps + \eta)_\sigma = \eps_\sigma + \eta_\sigma$ for all
  $\sigma \in \Gal(\kbar/k)$.

  Let $\mu : \J_\eps \times \J_\eta \to \J_{\eps+\eta}$ be the
  morphism that makes the diagram in the statement of the lemma
  commute.  For any $P \in \J_\eps$, $Q \in \J_\eta$ and
  $\sigma \in \Gal(\kbar/k)$ we have
  \begin{align*}
    \sigma(\mu(P,Q)) & = \sigma(\phi_{\eps+\eta}^{-1}
                       ( \phi_\eps(P) + \phi_\eta(Q))) \\
     & = \phi_{\eps+\eta}^{-1} (\sigma( \phi_\eps(P)) + \sigma (\phi_\eta(Q)) -
                   \eps_\sigma - \eta_\sigma ) \\
     &= \phi_{\eps+\eta}^{-1} (\phi_\eps(\sigma P) + \phi_\eta(\sigma Q)) \\
     &= \mu( \sigma P, \sigma Q).
  \end{align*}                     
  This proves that $\mu$ is defined over $k$.  
\end{proof}  

\begin{Remark}
  \label{rem:choices}
  We are still free to replace $\phi_{\eps+\eta}$ by
  $P \mapsto \phi_{\eps+\eta}(P) + T$ for $T \in \J(k)[2]$, and for
  this reason there are $\#\J(k)[2]$ choices for the map $\mu$.
\end{Remark}

\begin{ProofOf}{Theorem~\ref{thm:ctp-formula}}
  Let $\{ Q,Q'\} \subset \J_\eta$ be the inverse image of
  $P \in \K_\eta$, and let $\mu$ be as defined in
  Lemma~\ref{lem:twistedgplaw}.  Then $\phi = \mu(-,Q)$ is an
  isomorphism $\J_\eps \to \J_{\eps+\eta}$ defined over
  $k(Q) = k(\sqrt{a})$.  This proves (i).  From
  Lemma~\ref{lem:twistedgplaw} we also get a commutative diagram
  \begin{equation}
    \label{comm-square}
    \begin{aligned}
       \xymatrix{ \J_\eps \ar[r]^-{\phi}
      \ar[d]_{\phi_\eps} 
      & \J_{\eps+\eta} \ar[d]^{\phi_{\eps+\eta}}  \\
      \J \ar[r]^-{\tau_R} &  \J \rlap{,} }
    \end{aligned}
  \end{equation}
  where $\tau_R$ is translation by $R = \phi_\eta(Q)$.
  
  Let $\psi : \J_\eta \to \J$ be the isomorphism given by
  $X \mapsto X - Q$. Then $\sigma(\psi) \psi^{-1}$ is translation by
  $Q - \sigma(Q)$. By~\eqref{phs} we have
  $Q - \sigma(Q) = \phi_\eta(Q) - \phi_\eta(\sigma(Q))$.  Since
  $\phi_\eta(Q) = R$ and $\iota_\eta$ swaps over $Q$ and $Q'$ we have
  $\phi_\eta(Q') = -R$. It follows that the image of $\eta$ in
  $H^1(k,\J)$ is represented by the cocycle
  \[ \sigma \mapsto \left\{ \begin{array}{cl}
   0_\J & \text{ if } \sigma(\sqrt{a}) = \sqrt{a}, \\
   2R & \text{ if } \sigma(\sqrt{a}) =  -\sqrt{a}.
   \end{array} \right. \]
                        
We follow the definition of the Cassels-Tate pairing in
Section~\ref{sec:def-ctp}, with $a$ and $b$ the images of $\eps$ and
$\eta$ in $\Sha(\J/k)$ and $\Sha(\J^\vee/k)$, in the latter case using
the principal polarisation~\eqref{pp}.  Now $b$ corresponds to the
element in $H^1(k,\Pic^0\J_\eps)$ represented by the cocycle
  \[ \sigma \mapsto \left\{ \begin{array}{ll}
             {\rm id} & \text{ if } \sigma(\sqrt{a}) = \sqrt{a}, \\
   \![ \phi_\eps^* (\tau_{2R}^* \Theta - \Theta) ]
   & \text{ if } \sigma(\sqrt{a}) = -\sqrt{a}. \end{array} \right. \]
Next we claim that
\[ \phi_\eps^* (\tau_{2R}^* \Theta - \Theta) \sim \phi_\eps^*
  (\tau_{R}^* (2\Theta) - 2 \Theta) \sim \phi^* H_{\eps + \eta} -
  H_\eps. \] Indeed the first linear equivalence follows from the fact
the polarisation~\eqref{pp} is a group homomorphism, whereas the
second follows from~\eqref{H-theta} and~\eqref{comm-square}.

To apply the connecting homomorphism in \eqref{les1} we lift to a
cochain
  \[ \sigma \mapsto \left\{ \begin{array}{ll}
             {\rm id} & \text{ if } \sigma(\sqrt{a}) = \sqrt{a}, \\
    \phi^* H_{\eps + \eta} - H_\eps & \text{ if } \sigma(\sqrt{a}) = -\sqrt{a},
    \end{array} \right. \]
and then take its differential to get
  \[ (\sigma_1, \sigma_2) \mapsto 
  \left\{ \begin{array}{ll}
  \phi^* H_{\eps + \eta} + \sigma_1(\phi^* H_{\eps + \eta}) - 2H_\eps
  & \text{ if } \sigma_1(\sqrt{a}) = \sigma_2(\sqrt{a}) = -\sqrt{a}, \\
   {\rm id} & \text{ otherwise. }  \end{array} \right. \]
For any $\sigma_1 \in \Gal(\kbar/k)$ with $\sigma_1(\sqrt{a}) = -\sqrt{a}$
we have $\sigma_1(\phi) = \iota_{\eps+\eta} \circ \phi \circ \iota_\eps$
and hence $\sigma_1(\phi^*H_{\eps + \eta}) = \iota_\eps^* (\phi^*H_{\eps + \eta})$.
Thus there exists a $k$-rational function $g$ on $\J_\eps$ as
specified in (ii), and we can take 
$c_v \in H^2(k_v,\kbar_v^\times) = \Br(k_v)$ to be the class of
  \[ (\sigma_1, \sigma_2) \mapsto \left\{ \begin{array}{cl}
  g(P_v) & \text{ if } \sigma_1(\sqrt{a}) = \sigma_2(\sqrt{a}) = -\sqrt{a}, \\
   1 & \text{ otherwise, } \end{array} \right. \]
where $P_v \in \J_\eps(k_v)$ is any local point avoiding the zeros
and poles of $g$.

By \cite[Chapter XIV, Section 2, Proposition 5]{Serre-LF} the element
$c_v \in \Br(k_v)$ is represented by the quaternion algebra
$(a,g(P_v))$. In particular its local invariant is $0$ or $1/2$
according as the Hilbert symbol $(a,g(P_v))_v$ is $1$ or $-1$.  By
abuse of notation, we let the Hilbert symbol take values in $\F_2$.
The Cassels-Tate pairing is then given by summing the Hilbert symbols
$(a,g(P_v))_v$ as claimed.
\end{ProofOf}

\subsection{Remnants of the group law}
\label{sec:applic}

In this section we explain how the rational function $g$ in
Theorem~\ref{thm:ctp-formula} can be computed by specialising a
certain $(2,2,2)$-form. In later sections we explain how to compute
this $(2,2,2)$-form.

The Kummer surface $\K = \J/[-1]$ retains some remnants of the group
law on $\J$. As usual we embed $\K \subset \PP^3$ with coordinates
$x_1, \ldots, x_4$.  Flynn \cite{biquadratic} computed biquadratic
forms $\Phi_{ij}$ such that for all $P,Q \in \J$ we have
\begin{equation}
\label{remnant}
\begin{aligned}
  \Phi_{ij} (x_1(P), &\ldots, x_4(P);x_1(Q), \ldots, x_4(Q)) \\
 & \propto \big(  x_i(P+Q) x_j(P-Q) + x_i(P-Q) x_j(P+Q) \big)
\end{aligned}
\end{equation}
where the constant of proportionality is independent of
$1 \leqslant i,j \leqslant 4$.  We package these biquadratic forms as
a $(2,2,2)$-form on $\PP^3 \times \PP^3 \times
(\PP^3)^\vee$. Explicitly, we put $\x=(x_1,\ldots,x_4)$,
$\y=(y_1, \ldots,y_4)$, $\z = (z_1, \ldots,z_4)$ and define
\[ \Phi(\x,\y,\z) = \sum_{i,j=1}^4 \Phi_{ij}(\x,\y) z_i z_j. \] Since
the $\Phi_{ij}$ are symmetric, i.e.,
$\Phi_{ij}(\x,\y) = \Phi_{ij}(\y,\x)$, we have
$\Phi(\x,\y,\z) = \Phi(\y,\x,\z)$.  If $P,Q \in \J$ then
by~\eqref{remnant} we have
\begin{equation}
\label{eqn:prod1}
\begin{aligned}
  \Phi(x_1(P), \ldots, &x_4(P); x_1(Q), \ldots, x_4(Q);c_1,c_2,c_3,c_4) \\
 & \propto \big( \textstyle\sum_{i=1}^4 c_i x_i(P+Q) \big)
 \big( \textstyle\sum_{j=1}^4 c_j x_j(P-Q) \big).
\end{aligned}
\end{equation}
where the constant of proportionality is independent of
$c_1, \ldots ,c_4$.  We note for later use the immediate consequence
that for fixed $Q \in \J$ and $(c_1: \ldots: c_4) \in (\PP^3)^\vee$
the left hand side of~\eqref{eqn:prod1} is a non-zero function of $P$.

Just as the $(2,2,2)$-form $\Phi$ corresponds to the group law on
$\J$, there is a twisted $(2,2,2)$-form $\Phi_{\eps,\eta}$
corresponding to the twisted group law
\[ \mu : \J_\eps \times \J_\eta \to \J_{\eps + \eta} \]
defined in Lemma~\ref{lem:twistedgplaw}.
The analogue of~\eqref{eqn:prod1} is that if
$P \in \J_\eps$ and $Q \in \J_\eta$ then
\begin{equation}
\label{eqn:prod2}
\begin{aligned}
  \Phi_{\eps,\eta}(x_1(P),\ldots, x_4&(P); x_1(Q), \ldots,
   x_4(Q);c_1,c_2,c_3,c_4) \\
 & \propto \big( \textstyle\sum_{i=1}^4 c_i x_i(\mu(P,Q)) \big)
  \big( \textstyle\sum_{j=1}^4 c_j x_j(\mu(P,\iota_\eta (Q))) \big).
\end{aligned}
\end{equation}

The next theorem computes the rational function $g$ in
Theorem~\ref{thm:ctp-formula}.
\begin{Theorem}
  \label{thm:getg}
  Let $Q \in \J_\eta$ and let
  $\phi = \mu(-,Q) : \J_\eps \to \J_{\eps+\eta}$.  Let $H_\eps$ and
  $H_{\eps+\eta}$ be the divisors on $\J_\eps$ and $\J_{\eps+\eta}$
  obtained by pulling back the hyperplane sections $\{x_1 = 0\}$ and
  $\{c_1 x_1 + \ldots + c_4 x_4 = 0\}$ on
  $\J_\eps \to \K_\eps \subset \PP^3$ and
  $\J_{\eps+\eta} \to \K_{\eps+\eta} \subset \PP^3$.  Then
  \[ g  =
    \Phi_{\eps,\eta}(x_1, \ldots, x_4; x_1(Q), \ldots, x_4(Q);
    c_1,c_2,c_3,c_4)/x_1^2 \]
  is a rational function on $\J_\eps$ with divisor
    \begin{equation}
 \divisor(g) = \phi^* H_{\eps + \eta} + \iota_\eps^* (\phi^* H_{\eps + \eta})
  - 2 H_\eps. \end{equation}
\end{Theorem}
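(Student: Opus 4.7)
The plan is to treat $g$ as the ratio of two sections of the same line bundle on $\J_\eps$ and to read off its divisor directly from the factorisation property~\eqref{eqn:prod2} of $\Phi_{\eps,\eta}$. Write $h(P) = \Phi_{\eps,\eta}(x_1(P), \ldots, x_4(P); x_1(Q), \ldots, x_4(Q); c_1, \ldots, c_4)$, so that $g = h/x_1^2$ as rational functions on $\K_\eps$ (hence on $\J_\eps$ via the quotient $\J_\eps \to \K_\eps$). Since $h$ is homogeneous of degree $2$ in the Kummer coordinates of $\K_\eps$, when pulled back to $\J_\eps$ it is a section of the line bundle associated to $2H_\eps$, and likewise for $x_1^2$, whose zero divisor is exactly $2H_\eps$ by definition of $H_\eps$. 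So $\divisor(g)$ equals the zero divisor of $h$ minus $2H_\eps$, and it remains to identify the zero divisor of $h$.

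Here~\eqref{eqn:prod2} is the key input: as $P$ varies over $\J_\eps$, the value $h(P)$ factors, up to a scalar that is independent of $P$ (once we fix consistent lifts of Kummer coordinates), as the product
\[
\bigl(\textstyle\sum_i c_i x_i(\mu(P,Q))\bigr)\bigl(\textstyle\sum_j c_j x_j(\mu(P,\iota_\eta Q))\bigr).
\]
The first factor vanishes precisely on $\phi^{-1}(\{c \cdot x = 0\})$, which by the definitions of $\phi = \mu(-,Q)$ and $H_{\eps+\eta}$ contributes $\phi^* H_{\eps+\eta}$ to the zero divisor of $h$. For the second factor I would argue that $\mu(P, \iota_\eta Q) = \iota_{\eps+\eta}(\phi(\iota_\eps P))$: composing with $\phi_{\eps+\eta}$ and using Lemma~\ref{lem:twistedgplaw} together with $\phi_\eta(\iota_\eta Q) = -\phi_\eta(Q)$ and $\phi_\eps(\iota_\eps P) = -\phi_\eps(P)$, both sides map to $\phi_\eps(P) - \phi_\eta(Q)$, and $\phi_{\eps+\eta}$ is an isomorphism. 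Since $\iota_{\eps+\eta}$ acts trivially on $\K_{\eps+\eta}$, the Kummer coordinates of $\mu(P,\iota_\eta Q)$ agree with those of $\phi(\iota_\eps P)$, so the vanishing of the second factor contributes $\iota_\eps^*\phi^*H_{\eps+\eta}$ to the zero divisor of $h$. Combining, $\divisor(g) = \phi^*H_{\eps+\eta} + \iota_\eps^*\phi^*H_{\eps+\eta} - 2H_\eps$, as claimed.

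Two small checks are needed to make the argument rigorous. First, $h$ must not be identically zero on $\J_\eps$: the non-vanishing remark immediately following~\eqref{eqn:prod1} (that for fixed $Q$ and fixed $(c_1{:}\cdots{:}c_4)$ the left hand side is a non-zero function of $P$) applies verbatim to the twisted version via~\eqref{eqn:prod2}, because a generic $P$ makes both linear factors on the right non-zero. Second, the proportionality constant in~\eqref{eqn:prod2} depends on $P$ only through the choice of homogeneous representatives of the Kummer coordinates, and in passing to the divisor of a rational function on the abstract variety $\J_\eps$ this ambiguity disappears.

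The main obstacle, and the step whose legitimacy really requires the invariant-theoretic work promised in Sections~\ref{sec:heis}--\ref{sec:get222}, is the very existence of the twisted $(2,2,2)$-form $\Phi_{\eps,\eta}$ satisfying~\eqref{eqn:prod2}; granted this, the divisor computation above is purely formal. Within the present proof, the only genuinely subtle point is the identification $\mu(P,\iota_\eta Q) = \iota_{\eps+\eta}(\phi(\iota_\eps P))$, which hinges on the compatibility of the choices of $\phi_\eps$, $\phi_\eta$, $\phi_{\eps+\eta}$ fixed in Lemma~\ref{lem:twistedgplaw} and on the fact that $\K_{\eps+\eta}$ does not see the involution $\iota_{\eps+\eta}$.
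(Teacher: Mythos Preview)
Your proof is correct and takes a genuinely different route from the paper's. For the second factor the paper does not compute $\mu(P,\iota_\eta Q)$ as you do; it simply observes that $g$ factors through $\K_\eps$ and is therefore $\iota_\eps$-invariant, so once $g$ vanishes on $\phi^*H_{\eps+\eta}$ it automatically vanishes on $\iota_\eps^*\phi^*H_{\eps+\eta}$. But this yields only $\divisor(g) = \phi^*H_{\eps+\eta} + \iota_\eps^*\phi^*H_{\eps+\eta} + E - 2H_\eps$ with $E$ effective; the paper then kills $E$ by comparing with the abstract rational function already supplied by Theorem~\ref{thm:ctp-formula}(ii), and must also reduce (via a genericity argument deferred to~\cite{JialiThesis}) to the case where the two pulled-back hyperplanes are distinct and irreducible. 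Your argument bypasses both the appeal to Theorem~\ref{thm:ctp-formula}(ii) and the genericity reduction, because you read the full zero divisor of $h$ directly from the factorisation rather than just an inequality. What makes this legitimate---and your sentence about the proportionality constant could be sharpened to say so---is that \eqref{eqn:prod2} is really an equality of morphisms $\J_\eps \to \PP^9$: the ten sections $\Phi_{ij}(x(P),x(Q))$ and the ten symmetric products $x_i(\mu(P,Q))x_j(\mu(P,\iota_\eta Q)) + x_j(\mu(P,Q))x_i(\mu(P,\iota_\eta Q))$ define the same point of $\PP^9$, and the latter family is base-point-free (two points of $\PP^3$ always determine a point of $\operatorname{Sym}^2\PP^3$). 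Hence the two families differ by a global unit, and $\divisor(h)$ is exactly the sum of the two hyperplane pullbacks with no residual $E$.
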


\begin{proof} Let $P \in \J_\eps$ and
  $P' = \phi(P) = \mu(P,Q) \in \J_{\eps + \eta}$.
  If $c_1 x_1(P') + \ldots
  + c_4 x_4 (P') = 0$ then by~\eqref{eqn:prod2} we have $g(P)=0$.
  Therefore $g$ vanishes on $\phi^* H_{\eps + \eta}$. Since $g$
  factors via the double cover $\J_\eps \to \K_\eps$ it also
  vanishes on $\iota_\eps^* (\phi^* H_{\eps + \eta})$.

  By the observation following~\eqref{eqn:prod1} we know
  that $g$ does not vanish identically on $\J_\eps$.
  Assuming that $\phi^* H_{\eps + \eta}$
  and $\iota_\eps^* (\phi^* H_{\eps + \eta})$
  are distinct and irreducible it follows that
  \begin{equation}
    \label{withE}
    \divisor(g) = \phi^* H_{\eps + \eta} + \iota_\eps^* (\phi^* H_{\eps + \eta})
    + E - 2 H_\eps \end{equation}
  for some effective divisor $E$. By considering the quotient of $g$ and
  the rational function of the same name
  in Theorem~\ref{thm:ctp-formula}(ii), it follows that $E = 0$.

  This completes the proof under the assumption in the last paragraph.
  We showed in \cite[Section 5.2.3]{JialiThesis} that the assumption
  is satisfied for generic choices of $Q \in \J$ and
  $(c_1: \ldots :c_4) \in (\PP^3)^\vee$, and from this we were able to
  deduce that the theorem holds in general.
\end{proof}

In \cite{JialiThesis} the second author computed the twisted
$(2,2,2)$-form $\Phi_{\eps,\eta}$ from the untwisted $(2,2,2)$-form
$\Phi$ by directly making the changes of coordinates $\psi_\eps$,
$\psi_\eta$ and $\psi_{\eps+\eta}$ as defined in
\eqref{twist:J->K}. Since these $4 \times 4$ matrices are typically
defined over (different) degree $16$ number fields, this method is
rather slow. Our improved method for computing the twisted Kummer
surfaces (see Section~\ref{sec:tw-kum}) also means we do not have
these matrices to hand, although it would be possible to compute them
in hindsight by matching up the nodes on $\K$ and $\K_\eps$.

In Section~\ref{sec:get222} we give a formula, analogous to that
in~\cite{bq-ctp}, that enables us to compute the twisted
$(2,2,2)$-form $\Phi_{\eps,\eta}$ without working in field extensions
of such large degree.

\subsection{The Heisenberg group}
\label{sec:heis}
In this section we describe some of the geometry behind
our method for computing the $(2,2,2)$-forms.

\begin{Definition}
\label{def:heis}
  The {\em standard level $2$ Heisenberg group} is
the subgroup $H_2 \subset \GL_4$ generated by the matrices
    \[
      \begin{pmatrix}
        1 & 0 & 0 & 0 \\
        0 & -1 & 0 & 0 \\
        0 & 0 & 1 & 0 \\ 
        0 & 0 & 0 & -1
      \end{pmatrix}, \,\,\,\,
      \begin{pmatrix}
        1 & 0 & 0 & 0 \\
        0 & 1 & 0 & 0 \\
        0 & 0 & -1 & 0 \\ 
        0 & 0 & 0 & -1
      \end{pmatrix}, \,\,\,\,
      \begin{pmatrix}
        0 & 1 & 0 & 0 \\
        1 & 0 & 0 & 0 \\
        0 & 0 & 0 & 1 \\ 
        0 & 0 & 1 & 0
      \end{pmatrix}, \,\,\,\,
     \begin{pmatrix}
        0 & 0 & 1 & 0 \\
        0 & 0 & 0 & 1 \\
        1 & 0 & 0 & 0 \\ 
        0 & 1 & 0 & 0
      \end{pmatrix}.
    \]
    It is a non-abelian group of order $2^5$ and sits in an exact sequence
    \[ 0 \to \{\pm 1\} \to H_2 \to \overline{H}_2 \to 0, \]
    where $\overline{H}_2 \isom (\Z/2\Z)^4$ is the image of $H_2$ in $\PGL_4$.
\end{Definition}
    
\begin{Lemma}
  \label{lem:heis}
  Let $\K \subset \PP^3$ be a Kummer surface defined over an algebraically
  closed field of characteristic not $2$. We may change coordinates
  on $\PP^3$ so that
  \begin{enumerate}
  \item the isomorphisms $\K \to \K$ corresponding to a
    $(2,4)$-partition of the Weierstrass points (equivalently to
    translation by some $0 \not= T \in \J[2]$) are given by the
    non-trivial elements of $\overline{H}_2$, and
  \item the isomorphisms $\K \to \K^\vee$ corresponding to a $(1,5)$,
    respectively $(3,3)$, partition of the Weierstrass points are
    given by the skew-symmetric, respectively symmetric, matrices in
    $\overline{H}_2$.
  \end{enumerate}
\end{Lemma}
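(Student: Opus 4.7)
The plan is to exploit the uniqueness (up to change of basis) of the Heisenberg representation. Let $L = \mathcal{O}(2\Theta)$ and $V = H^0(\J, L)$, so that the map $|2\Theta|$ embeds $\K = \J/[-1]$ into $\PP(V) = \PP^3$. The theta group $\mathcal{G}(L)$ is a central extension of $\J[2]$ by $\mathbb{G}_m$, and by Mumford's theorem $V$ is its unique irreducible representation with central character $\mathrm{id}$. I would therefore choose coordinates on $V$ so that the image of $\mathcal{G}(L)$ in $\GL_4 = \GL(V)$ coincides with the standard Heisenberg group $H_2$ of Definition~\ref{def:heis}, projecting to $\overline{H}_2 \subset \PGL_4$. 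Since $\overline{H}_2 \cong \J[2]$ acts on $\K \subset \PP^3$ by translation, statement (i) is immediate: the $15$ non-trivial translations correspond to the $15$ non-identity elements of $\overline{H}_2$.

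For (ii), I would first observe that the $16$ isomorphisms $\K \to \K^\vee$ exhibited in Remark~\ref{rem:6-10} form a single torsor under $\overline{H}_2$ (acting on such maps by pre- or post-composition with translations on $\K$), matching the torsor structure on $\Phi_1$ described in Section~\ref{sec:sel}. It therefore suffices to exhibit at least one of these geometric isomorphisms as an element of $\overline{H}_2$ itself, where each $M \in \overline{H}_2$ is viewed as a bilinear form defining the map $x \mapsto Mx$ from $\PP^3$ to $(\PP^3)^\vee$. In parallel, a direct inspection of the four generators in Definition~\ref{def:heis} shows they are all symmetric. The identity $(ab)^T = b^T a^T = \varepsilon \cdot ab$, where $\varepsilon \in \{\pm 1\}$ records the commutator of $a,b \in H_2$ (descending to the Weil pairing on $\overline{H}_2 \cong \F_2^4$), then shows every element of $H_2$ is symmetric or skew-symmetric, with the symmetry type governed by an even quadratic form $q$ on $\F_2^4$ refining the Weil pairing. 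Such a $q$ has exactly $10$ isotropic and $6$ anisotropic vectors, yielding $10$ symmetric and $6$ skew-symmetric elements of $\overline{H}_2$, which matches the $10$ $(3,3)$-partitions and $6$ $(1,5)$-partitions.

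The main obstacle is the final matching: verifying that the geometric parity coming from the formulas~\eqref{skew-symm} and~\eqref{3-3} of Section~\ref{sec:kum} agrees with the Heisenberg parity just described. This reduces to the standard identification of the commutator pairing on $\mathcal{G}(L)$ with the Weil pairing on $\J[2]$ induced by the principal polarisation, together with exhibiting a single explicit $A_i$ as a skew-symmetric element of $\overline{H}_2$ in the chosen coordinates. Once this anchor is established, the torsor action combined with the parity count forces the $6$ skew-symmetric elements of $\overline{H}_2$ to correspond precisely to $A_1, \ldots, A_6$ (the $(1,5)$-partitions) and the $10$ symmetric elements to correspond to the matrices of form~\eqref{3-3} (the $(3,3)$-partitions), completing the proof.
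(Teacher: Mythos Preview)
Your treatment of (i) matches the paper's: it simply cites Birkenhake--Lange for the same Mumford-theoretic uniqueness of the Heisenberg representation that you invoke.

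For (ii) your torsor/counting framework is correct as far as it goes, but the ``anchor'' step you yourself flag as the main obstacle is not actually closed. Knowing that the $16$ maps form a coset $B_0\,\overline{H}_2$, and that both left and right translation by $\J[2]$ preserve this set, only shows that $B_0$ \emph{normalises} $\overline{H}_2$; the normaliser of $\overline{H}_2$ in $\PGL_4$ is strictly larger (it surjects onto $\mathrm{Sp}_4(\F_2)$). The Weil-pairing identification does not by itself pin $B_0$ down inside $\overline{H}_2$, and since the Heisenberg coordinates arise from an abstract uniqueness statement you have no direct way to verify that a specific $A_i$ lands in $\overline{H}_2$.

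The paper short-circuits this with an observation you nearly make but do not exploit: adding a fixed Weierstrass point commutes with every individual translation $\tau_T$, so each $B$ satisfies $B M_T = M_T' B$ where $M_T'$ is the action of $T$ on $\K^\vee$. Because every matrix in $H_2$ is orthogonal, the induced action on the dual is again $\overline{H}_2$ with $M_T' = M_T$, so in fact $B$ \emph{centralises} $\overline{H}_2$. Since $\overline{H}_2$ is its own centraliser in $\PGL_4$, this forces $B \in \overline{H}_2$ directly --- no anchor required. The symmetric/skew-symmetric split then follows exactly as you describe, via Remark~\ref{rem:6-10} together with the fact that a map $\PP^3 \to (\PP^3)^\vee$ transforms as $B \mapsto P^T B P$ under change of coordinates, which preserves (skew-)symmetry.
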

\begin{proof}
  The existence of a change of coordinates satisfying (i) is a special
  case of \cite[Example 6.7.4]{BirkenhakeLange}.
  Since all the matrices are orthogonal, the conclusion in (i) for
  $\K$ also holds for the dual Kummer $\K^\vee$. The claims in (ii)
  follow by Remark~\ref{rem:6-10} and the fact that $\overline{H}_2$
  is its own centraliser inside $\PGL_4$.
\end{proof}
  
\begin{Lemma}
\label{lem-tr-id}
Let $\K \subset \PP^3$ be a Kummer surface defined over a field of
characteristic not $2$. Let $B_1, \ldots, B_{10}$ be $4 \times 4$
symmetric matrices whose images in $\PGL_4$ correspond to the
$(3,3)$-partitions of the Weierstrass points. If we define quadratic
forms by $Q_i(x_1, \ldots, x_4) = \frac{1}{2} \x^T B_i \x$ and
$Q_i^*(y_1, \ldots, y_4) = \frac{1}{2} \y^T B_i^{-1} \y$ then
\begin{equation}
\label{trace-identity}
      \sum_{i=1}^{10} Q_i(x_1, \ldots, x_4) Q_i^*(y_1, \ldots, y_4) =
      \left(\sum_{j=1}^4 x_j y_j\right)^2.
 \end{equation}
\end{Lemma}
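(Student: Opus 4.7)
The plan is to reduce the identity to a computation in the standard Heisenberg group, and then deduce it from a Parseval-type expansion in an orthogonal basis. First, both sides of \eqref{trace-identity} are invariant under the simultaneous substitution $x\mapsto Px$, $y\mapsto P^{-T}y$, $B_i\mapsto P^{-T}B_iP^{-1}$ for $P\in\GL_4$, and they are also invariant under rescaling each $B_i$ by a nonzero scalar, since $Q_i$ and $Q_i^*$ then scale reciprocally. As both sides are polynomial in the $x_j$ and $y_j$, it suffices to prove the identity after passing to $\kbar$. By Lemma~\ref{lem:heis}(ii) applied to $\K$, I may then change coordinates so that the $B_i$, up to scalars, are the ten symmetric elements of the standard Heisenberg group $\overline{H}_2\subset\PGL_4$; by scale-invariance I may further replace them with their orthogonal lifts $\widetilde{B}_i\in H_2$, which satisfy $\widetilde{B}_i^2 = I$ and hence $\widetilde{B}_i^{-1} = \widetilde{B}_i$.

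The key structural input is that $\widetilde{B}_1,\ldots,\widetilde{B}_{10}$ form an orthogonal basis of the $10$-dimensional space of symmetric $4\times 4$ matrices with respect to the trace pairing. Since $\widetilde{B}_i^2 = I$ we have $\tr(\widetilde{B}_i^2) = 4$; for $i\neq j$, the product $\widetilde{B}_i\widetilde{B}_j\in H_2$ lies outside the centre $\{\pm I\}$ (as the classes of $\widetilde{B}_i$ and $\widetilde{B}_j$ in $\overline{H}_2$ are distinct), and every non-central element of $H_2$ has trace zero in its faithful $4$-dimensional representation. This is the standard fact that a faithful irreducible representation of a Heisenberg-type group vanishes outside the centre, and can be checked directly on the generators of Definition~\ref{def:heis}. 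Thus $\tr(\widetilde{B}_i\widetilde{B}_j) = 4\delta_{ij}$.

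The identity now follows from the Parseval expansion
\[ \tr(\alpha\beta) \;=\; \tfrac{1}{4}\sum_{i=1}^{10}\tr(\alpha\widetilde{B}_i)\,\tr(\beta\widetilde{B}_i), \]
valid for any symmetric $4\times 4$ matrices $\alpha,\beta$. Specialising to $\alpha = xx^T$ and $\beta = yy^T$ gives $\tr(\alpha\widetilde{B}_i) = x^T\widetilde{B}_i x = 2Q_i(x)$, $\tr(\beta\widetilde{B}_i) = 2Q_i^*(y)$ (using $\widetilde{B}_i^{-1}=\widetilde{B}_i$), and $\tr(\alpha\beta) = (x^Ty)^2$, yielding $\sum_i Q_i(x)Q_i^*(y) = (x\cdot y)^2$ as required. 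The main obstacle is keeping careful track of normalisations through the reduction: the $B_i$ in the statement are specified only up to their projective classes in $\PGL_4$, whereas the Parseval step requires the specific orthogonal lifts with $\widetilde{B}_i^2=I$; the scale-invariance of each product $Q_iQ_i^*$ is what allows us to pass freely between these two normalisations.
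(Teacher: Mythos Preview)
Your proof is correct. Both approaches begin with the same reduction: passing to $\kbar$, using scale-invariance of $Q_iQ_i^*$, and invoking Lemma~\ref{lem:heis}(ii) to replace the $B_i$ by the ten symmetric lifts $\widetilde{B}_i\in H_2$. From there the paper simply writes out the ten-term sum explicitly (four diagonal terms and six off-diagonal terms) and expands to obtain $\bigl(\sum_j x_jy_j\bigr)^2$. You instead observe that the $\widetilde{B}_i$, being involutions in an extraspecial $2$-group whose non-central elements have trace zero, form an orthogonal basis of the space of symmetric $4\times 4$ matrices under the trace pairing; the identity then drops out of the Parseval expansion applied to the rank-one matrices $xx^T$ and $yy^T$. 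The paper's computation is entirely elementary and hand-checkable, while your argument is more conceptual---it explains \emph{why} the coefficient is exactly $1$ and would transport immediately to the analogous statement for level-$2$ Heisenberg groups in other even dimensions.
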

\begin{proof}
  The conclusions of the lemma are unchanged if we extend our field,
  rescale the matrices $B_i$, or make a change of coordinates on
  $\PP^3$.  By Lemma~\ref{lem:heis} we may therefore assume that the
  $B_i$ are (lifts to $\GL_4$ of) the symmetric matrices in
  $\overline{H}_2$.  The left hand side of~\eqref{trace-identity} is
  now
  \begin{align*} \frac{1}{4} \sum_{u,v \in \{\pm 1\}}
      (x_1^2 + u x_2^2 & + v x_3^2 + uv x_4^2)
      (y_1^2 + u y_2^2 + v y_3^2 + uv y_4^2) \\
    &+ \sum_{u \in \{ \pm 1 \}} (x_1 x_2 + u x_3 x_4) (y_1 y_2 + u y_3 y_4) \\  
    &+ \sum_{u \in \{ \pm 1 \}} (x_1 x_3 + u x_2 x_4) (y_1 y_3 + u y_2 y_4)  \\
    &+ \sum_{u \in \{ \pm 1 \}} (x_1 x_4 + u x_2 x_3) (y_1 y_4 + u y_2 y_3).
  \end{align*}
  Expanding this out gives the right hand side of~\eqref{trace-identity}.
\end{proof}
  
\begin{Lemma}
\label{lem-heis}
The space of quartic forms invariant under the natural action of the
standard level $2$ Heisenberg group $H_2$ is $5$-dimensional, spanned
by
\begin{align*}
    x_1^4 + x_2^4 + x_3^4 + x_4^4, \quad x_1^2 x_2^2 + x_3^2 x_4^2, \quad
    x_1^2 x_3^2 + x_2^2 x_4^2,  \quad x_1^2 x_4^2 + x_2^2 x_3^2, \quad
    x_1 x_2 x_3 x_4.
\end{align*}
Up to scalars, there are exactly $10$ quartic forms in this space that
factor as the square of a quadratic form. These are the squares of the
quadratic forms corresponding to the symmetric matrices in
$\overline{H}_2$.
\end{Lemma}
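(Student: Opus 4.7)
The plan is to treat the two parts of the Lemma separately. For the first part I would reduce via the subgroup generated by the diagonal matrices $g_1 = \mathrm{diag}(1,-1,1,-1)$ and $g_2 = \mathrm{diag}(1,1,-1,-1)$: a monomial $x_1^{a_1} x_2^{a_2} x_3^{a_3} x_4^{a_4}$ is fixed by both of these exactly when $a_2+a_4$ and $a_3+a_4$ are both even, which in total degree $4$ forces the $a_i$ to share a common parity. The $11$ such monomials are the four $x_i^4$, the six $x_i^2 x_j^2$ with $i<j$, and the single $x_1 x_2 x_3 x_4$. I would then impose invariance under the two swap generators $g_3 : x_1 \leftrightarrow x_2,\, x_3 \leftrightarrow x_4$ and $g_4 : x_1 \leftrightarrow x_3,\, x_2 \leftrightarrow x_4$, which together form a Klein four-group of double transpositions of $\{1,2,3,4\}$. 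Its orbits on the $11$ surviving monomials are $\{x_1^4, x_2^4, x_3^4, x_4^4\}$, $\{x_1^2 x_2^2, x_3^2 x_4^2\}$, $\{x_1^2 x_3^2, x_2^2 x_4^2\}$, $\{x_1^2 x_4^2, x_2^2 x_3^2\}$, and $\{x_1 x_2 x_3 x_4\}$; summing each orbit gives exactly the five listed invariants, showing the space is $5$-dimensional.

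For the second part, since the characteristic of $k$ is not $2$ the equation $(h \cdot Q)^2 = Q^2$ in the polynomial ring forces $h \cdot Q = \chi(h) Q$ for some character $\chi : H_2 \to \{\pm 1\}$; as $-I$ acts trivially on quadratic forms, $\chi$ descends to a character of $\overline{H}_2 \isom (\Z/2\Z)^4$. Conversely, the square of any such eigenform is $H_2$-invariant, and distinct eigenlines yield non-proportional squares. Thus the projective classes of quartic squares in the $5$-dimensional invariant space correspond bijectively to the simultaneous $\overline{H}_2$-eigenlines in the $10$-dimensional space of quadratic forms.

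I would compute this decomposition by inspection. The $4$-dimensional subspace spanned by the squares $x_i^2$ is $\overline{H}_2$-stable; $g_1, g_2$ act trivially on it while $g_3, g_4$ act as the regular representation of $(\Z/2\Z)^2$, producing four $1$-dimensional eigenlines (one per character, spanned by $\sum_i \eps_i x_i^2$ for the appropriate sign patterns). The remaining six monomials decompose into three $\overline{H}_2$-stable $2$-dimensional subspaces $\langle x_1 x_2, x_3 x_4 \rangle$, $\langle x_1 x_3, x_2 x_4 \rangle$ and $\langle x_1 x_4, x_2 x_3 \rangle$, each of which splits further into two $1$-dimensional eigenlines, spanned by the sum and difference of the two basis vectors. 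This gives exactly $4 + 6 = 10$ simultaneous eigenlines, hence exactly $10$ projective squares in the invariant space.

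Finally I would identify these $10$ quadratic semi-invariants with the symmetric matrices in $\overline{H}_2$. All four generators of $H_2$ are orthogonal by inspection, so every $h \in H_2$ acts on a quadratic form $Q_B(x) = \tfrac{1}{2} x^T B x$ via $Q_B \mapsto Q_{h^{-1} B h}$. Because $\overline{H}_2$ is abelian, conjugation of any $B \in H_2$ by $h \in H_2$ returns $\pm B$, so $Q_B$ is a semi-invariant whenever $B$ is symmetric. By Lemma \ref{lem:heis}(ii) there are exactly $10$ symmetric projective classes in $\overline{H}_2$, matching the $10$ eigenlines counted above; thus the $10$ projective squares in the invariant space are precisely the squares of the $Q_B$ as $B$ ranges over these symmetric matrices. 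The main obstacle is keeping the eigenspace bookkeeping complete, but once the action of the four Heisenberg generators on quadratic forms is written out the count is forced, and the matching to symmetric matrices in $\overline{H}_2$ is supplied directly by Lemma \ref{lem:heis}.
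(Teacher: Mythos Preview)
Your proposal is correct and carries out in detail exactly the two approaches the paper sketches: a brute-force monomial count for the first part, and the decomposition of $\operatorname{Sym}^2$ of the standard representation into one-dimensional $\overline{H}_2$-eigenspaces for the second. The identification of the ten eigenlines with the ten symmetric matrices in $\overline{H}_2$ via conjugation (using orthogonality of the generators) is a clean way to finish, matching the explicit list that appears in the proof of Lemma~\ref{lem-tr-id}.
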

\begin{proof}
  The lemma may be proved either by a brute force calculation, or by
  decomposing the 2nd and 4th symmetric powers of the standard
  representation of $H_2$ into irreducible representations.
\end{proof}

\subsection{A formula for the untwisted $(2,2,2)$-form}
\label{sec:untwisted}

As usual let $\C$ be the genus $2$ curve with equation $y^2 = f(x)$,
where the coefficients of $f$ are labelled as in~\eqref{eqn:C}.  We
recall that Flynn~\cite{biquadratic} computed the biquadratic forms
$\Phi_{ij}(\x,\y)$, with coefficients in $\Z[f_0, \ldots, f_6]$, and
in Section~\ref{sec:applic} we packaged these as a $(2,2,2)$-form
\begin{equation}
  \label{display:222}
  \Phi(\x,\y,\z) = \sum_{i,j=1}^4 \Phi_{ij}(\x,\y) z_i z_j.
\end{equation}
A generic calculation shows that
\begin{equation}
  \label{222->kum}
  \Phi(\x;\x;z_1,z_2,z_3,0) = 2 (z_2^2 - z_1 z_3) F(\x)
\end{equation}
where $F$ is the equation~\eqref{kumeqn} for $\K \subset \PP^3$.

In this section we give a new formula for the $(2,2,2)$-form $\Phi$,
and in the next section we modify it to give a formula for the twisted
$(2,2,2)$-form $\Phi_{\eps,\eta}$. Both formulae involve working with
the \'etale algebra of degree $10$, say $L_{10}$, corresponding to the
$(3,3)$-partitions of the Weierstrass points.

\begin{Remark}
  As explained in \cite[Lemma 5.3]{Stoll}, after possibly replacing
  $f(x)$ by $f(x+n)$ for some $n \in \Z$, we can take $L_{10}$ to be
  defined by the degree $10$ polynomial specified in \cite[page
  56]{CF}.  If $L_{10}$ is a field and $f$ has roots
  $\theta_1, \ldots, \theta_6$ then this is the minimal polynomial of
  $\theta_1 \theta_2 \theta_3 + \theta_4 \theta_5 \theta_6$.
\end{Remark}

Supposing that $f$ factors as the product of two cubics, say,
\begin{equation}
\label{fact33}
  f(x) = f_6 (x^3 + r_2 x^2 + r_1 x + r_0)
  (x^3 + s_2 x^2 + s_1 x + s_0),
\end{equation}
we define a column vector
\begin{equation}
  \label{def:la}
  \la = (\la_1, \ldots,\la_4)^T = (f_6 (r_0 s_2 + r_2 s_0), f_6 (r_0 + s_0),
    f_6 (r_1 + s_1), 1 )^T \end{equation}
and a symmetric matrix
  \[ A = f_6 \begin{pmatrix}
   r_2 - s_2  &                -r_1 + s_1  &           r_0 - s_0 \\
   -r_1 + s_1  & r_0 + r_1 s_2 - r_2 s_1 - s_0  &    -r_0 s_2 + r_2 s_0 \\
   r_0 - s_0  &          -r_0 s_2 + r_2 s_0  &     r_0 s_1 - r_1 s_0
    \end{pmatrix}. \]

\begin{Lemma}
  \label{lem:strconsts}
  If $f$ factors as in~\eqref{fact33} as the product of two cubic
  polynomials, then the $(2,2,2)$-form $\Phi$ has the following
  properties.
  \begin{enumerate}
  \item $\Phi(\x,\y,\lambda) 
    =  (\x^T B \y)^2$ where
    \[ B = \la \la^T +
      \begin{pmatrix} \adj A & 0 \\ 0 & 0 \end{pmatrix}. \]
  \item If we put $b_{ii} = B_{ii}$ and $b_{ij} = 2 B_{ij}$ for
    $i \not= j$ then
    \[ b_{ij} b_{kl} = \sum_{m \leqslant n} {\rm coeff}(\Phi_{mn}| x_i
      x_j y_k y_l) b_{mn}\] for all $1 \leqslant i,j,k,l \leqslant 4$.
  \end{enumerate}
\end{Lemma}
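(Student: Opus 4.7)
My plan is to prove (i) and then deduce (ii) by comparing coefficients.

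For (i), the starting geometric observation is that the factorisation~\eqref{fact33} singles out a $(3,3)$-partition of the Weierstrass points. By Remark~\ref{rem:6-10}, each such partition classifies (up to scalar) a symmetric matrix defining an isomorphism $\K\to\K^\vee$, given explicitly by~\eqref{3-3}. I would first identify $B$ with a scalar multiple of this symmetric matrix by expanding~\eqref{3-3} in the elementary symmetric polynomials of the two triples of roots and matching against the Galois-symmetric formulas for $\lambda$ and $A$. Equivalently, $\lambda$ is a node of $\K^\vee$ corresponding to the partition.

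To prove $\Phi(\x,\y,\lambda) = (\x^T B \y)^2$, I would reduce to the diagonal case $\Phi(\x,\x,\lambda) = (\x^T B \x)^2$. Heuristically this follows from~\eqref{eqn:prod1}: specialising $\z$ at a dual-Kummer node forces $P+Q = \pm(P-Q)$ up to a $2$-torsion shift, so the two factors coincide and produce a perfect square. More concretely, $\Phi(\x,\x,\lambda)$ is a Heisenberg-invariant quartic, and by Lemma~\ref{lem-heis} such quartics that factor as a square are, up to scalars, precisely the squares of the quadratic forms coming from symmetric matrices in $\overline{H}_2$. The off-diagonal identity then follows by polarisation, using the symmetry $\Phi_{ij}(\x,\y) = \Phi_{ij}(\y,\x)$; the overall sign is pinned down by comparing a single coefficient. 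A direct computer-algebra check of the polynomial identity in $\Z[f_6,r_0,\ldots,s_2][\x,\y]$ gives a mechanical alternative route that avoids the heuristic entirely.

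For (ii), I would expand both sides of~(i) as polynomials in $\x$ and $\y$ and extract the coefficient of $x_i x_j y_k y_l$. Using the decomposition of $B$ into $\lambda\lambda^T$ plus $\adj A$ (padded by zeros in the fourth row and column) together with the relation $b_{mn} = (2-\delta_{mn}) B_{mn}$, the resulting polynomial relations in $\lambda_m$ and $(\adj A)_{mn}$ can be reorganised into the claimed form. A more conceptual perspective, consistent with the authors' preview at the end of Section~\ref{sec:formula}: the coefficients $\mathrm{coeff}(\Phi_{mn}|x_i x_j y_k y_l)$ furnish the structure constants of the \'etale algebra $L_{10}$ of $(3,3)$-partitions, and the $b_{mn}$ record the character values at the chosen partition, so~(ii) is just the multiplicativity of this character; one can verify it by combining part~(i) across all $10$ partitions, possibly aided by the trace identity~\eqref{trace-identity} of Lemma~\ref{lem-tr-id}.

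The main obstacle is the identification step in~(i): rewriting the formula~\eqref{3-3} (which features individual roots $\theta_i$) in the Galois-symmetric form required by $B$ (expressed in the invariants $r_*,s_*$) involves careful manipulation with elementary symmetric polynomials and pinning down the correct scalar normalisation. This is bookkeeping rather than conceptual work, but it is the most labour-intensive part of the argument.
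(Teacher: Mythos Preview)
The paper's own proof is a one-line computer algebra verification of both (i) and (ii) using Flynn's explicit biquadratic forms, so your mention of a direct check for (i) does match their approach. The interesting question is whether your conceptual route actually closes.

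For (i), your polarisation step does not work as stated. Knowing the diagonal identity $\Phi(\x,\x,\lambda)=Q(\x)^2$ together with the symmetry $\Phi(\x,\y,\lambda)=\Phi(\y,\x,\lambda)$ does \emph{not} determine the biquadratic form $\Phi(\x,\y,\lambda)$: the restriction map from symmetric $(2,2)$-forms on $\PP^3\times\PP^3$ to quartics on the diagonal has a nontrivial kernel. Concretely, $Q(\x)Q(\y)$ and $(\x^T B\y)^2$ are two distinct symmetric biquadratic forms with the same diagonal specialisation. So either you need the computer check, or you need to exploit more structure (e.g.\ that $\Phi(\x,\y,\lambda)$ restricted to $\K\times\K$ factors as in~\eqref{eqn:prod1}, combined with the injectivity of restriction of $(2,2)$-forms to $\K\times\K$).

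The more serious gap is in (ii). Extracting the coefficient of $x_i x_j y_k y_l$ from (i) gives a relation between products of the $B_{pq}$ and the quantities $\lambda_m\lambda_n$, weighted by the coefficients of $\Phi$. Statement (ii) is a relation between products $b_{ij}b_{kl}$ and the quantities $b_{mn}$. These are genuinely different identities: for instance, at $x_i^2 y_k^2$, (i) yields
\[
B_{ik}^2 \;=\; \sum_{m\leqslant n}\mathrm{coeff}(\Phi_{mn}\mid x_i^2 y_k^2)\,(2-\delta_{mn})\lambda_m\lambda_n,
\]
whereas (ii) asserts
\[
B_{ii}B_{kk} \;=\; \sum_{m\leqslant n}\mathrm{coeff}(\Phi_{mn}\mid x_i^2 y_k^2)\,(2-\delta_{mn})B_{mn}.
\]
Passing from one to the other requires the further identity obtained by replacing $\lambda_m\lambda_n$ with $(\adj A)_{mn}$ and $B_{ik}^2$ with $B_{ii}B_{kk}-B_{ik}^2$; this is independent content, not a ``reorganisation''. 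Your ``more conceptual perspective'' is circular: the assertion that the $\Phi$-coefficients are the structure constants of $L_{10}$ with respect to the $b_{mn}$ is precisely the statement of (ii), so invoking multiplicativity of the character assumes what is to be proved. In short, (ii) requires its own verification; it does not follow from (i).
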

\begin{proof}
  We checked this by computer algebra, using Flynn's formulae for the
  biquadratic forms $\Phi_{ij}(\x,\y)$ specialised to $f$ of the
  form~\eqref{fact33}.
\end{proof}

\begin{Remark}
  The $b_{ij}$ are unchanged when we swap over the two cubic factors
  in~\eqref{fact33}, and are therefore elements of $L_{10}$.  In fact
  they form a basis.  Lemma~\ref{lem:strconsts}(iii) then says that
  the $10^3$ coefficients of the $(2,2,2)$-form $\Phi$ are the
  structure constants for the degree $10$ \'etale algebra $L_{10}$
  with respect to this basis.
\end{Remark}

\begin{Proposition}
  \label{prop:222}
  Let $\lambda_1, \lambda_2, \lambda_3 \in L_{10}$ be given
  by~\eqref{def:la}.  Then the quartic form
  \[ P(x_1,\ldots,x_4) =2 (\lambda_2^2 - \lambda_1 \lambda_3) F(\x) +
    2 \sum_{i=1}^3 \lambda_i \Phi_{i4}(\x,\x) + \Phi_{44}(\x,\x) \]
  factors as $P = Q^2$ where $Q \in L_{10}[x_1,x_2,x_3,x_4]$ is a
  quadratic form.  If we scale $Q$ so that the coefficient of $x_4^2$
  is $1$ and write $Q^*$ for the dual quadratic form (in the sense of
  Lemma~\ref{lem-tr-id}) then the $(2,2,2)$-form is given by
  \[ \Phi(\x,\y,\z) = \tr_{L_{10}/k} (Q(\x) Q(\y) Q^*(\z)). \]
\end{Proposition}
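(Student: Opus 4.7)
The plan is to identify the quartic $P(\x)$ with the specialisation $\Phi(\x, \x, \lambda)$ where $\lambda = (\lambda_1, \lambda_2, \lambda_3, 1)^T$, use Lemma~\ref{lem:strconsts} to recognise this as a square and to expand $Q(\x) Q(\y)$ in the basis $\{b_{mn}\}$, and finally to verify the resulting trace formula over $\kbar$ via Lemma~\ref{lem-tr-id}.

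First I would expand $\Phi(\x, \x, \z) = \sum_{i, j = 1}^{4} \Phi_{ij}(\x, \x) z_i z_j$ in the coordinate $z_4$, using $\Phi_{ij} = \Phi_{ji}$ and~\eqref{222->kum}. The coefficients of $z_4^0, z_4^1, z_4^2$ are $2(z_2^2 - z_1 z_3) F(\x)$, $2 \sum_{i \leqslant 3} z_i \Phi_{i4}(\x, \x)$ and $\Phi_{44}(\x, \x)$ respectively, so evaluating at $\z = \lambda$ produces exactly $P(\x)$. By Lemma~\ref{lem:strconsts}(i) with $\y = \x$, we have $\Phi(\x, \x, \lambda) = (\x^T B \x)^2$, so the quadratic form $Q(\x) := \x^T B \x$ satisfies $P = Q^2$; its $x_4^2$-coefficient is $B_{44} = \lambda_4^2 = 1$, so the required normalisation is automatic.

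Next, $Q(\x) = \sum_{i \leqslant j} b_{ij}\, x_i x_j$ tautologically, so multiplying out $Q(\x) Q(\y)$ and invoking Lemma~\ref{lem:strconsts}(ii) gives
\[ Q(\x) Q(\y) \;=\; \sum_{m \leqslant n} \Phi_{mn}(\x, \y)\, b_{mn} \quad \text{in } L_{10}[\x, \y]. \]
Multiplying by $Q^*(\z)$ and applying $\tr_{L_{10}/k}$, the claimed identity $\Phi(\x, \y, \z) = \tr_{L_{10}/k}(Q(\x) Q(\y) Q^*(\z))$ reduces, after comparing with $\Phi(\x, \y, \z) = \sum_{i, j} \Phi_{ij}(\x, \y) z_i z_j$, to the pair of scalar identities
\[ \tr_{L_{10}/k}(b_{mm}\, Q^*(\z)) = z_m^2 \quad \text{and} \quad \tr_{L_{10}/k}(b_{mn}\, Q^*(\z)) = 2 z_m z_n \ \text{ for } m < n. \]

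To establish these I would pass to $\kbar$, where $L_{10}$ decomposes as $\kbar^{10}$ with factors indexed by the $10$ $(3, 3)$-partitions $I$, and the trace becomes summation over $I$. Applying Lemma~\ref{lem:strconsts}(i) to each factorisation gives a symmetric matrix $B_I$ with $(b_{mn})_I$ equal to the coefficient of $x_m x_n$ in $\x^T B_I \x$. Since the images of the $B_I$ in $\PGL_4$ run over the $(3, 3)$-partitions, Lemma~\ref{lem:heis}(ii) puts us in the setting of Lemma~\ref{lem-tr-id}, which yields $\sum_I (\x^T B_I \x) Q_I^*(\z) = (\sum_{j = 1}^{4} x_j z_j)^2$ for the appropriate normalisation of the $Q_I^*$. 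Matching coefficients of $x_m x_n$ gives $\sum_I (B_I)_{mn} Q_I^*(\z) = z_m z_n$, and the required trace identities follow after accounting for the factor $2$ in the off-diagonal $(b_{mn})_I$. The main obstacle here is bookkeeping with the normalisation conventions ($\tfrac{1}{2} \x^T B \x$ versus $\x^T B \x$, and the independent rescalings of each $B_I$), which must be tracked carefully so that the constants in Lemma~\ref{lem-tr-id} line up with those in the target identities; since the conclusion of Lemma~\ref{lem-tr-id} is invariant under rescaling the individual $B_i$, this is essentially a routine check rather than a conceptual obstacle.
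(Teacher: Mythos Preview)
Your proposal is correct and follows essentially the same route as the paper's proof: identify $P(\x)$ with $\Phi(\x,\x,\lambda)$ via~\eqref{222->kum}, factor it as $Q^2$ using Lemma~\ref{lem:strconsts}(i), expand $Q(\x)Q(\y)$ in the basis $\{b_{mn}\}$ using Lemma~\ref{lem:strconsts}(ii), and deduce the trace identities $\tr_{L_{10}/k}(b_{mn}Q^*(\z)) = z_m z_n$ (or $2z_m z_n$) from Lemma~\ref{lem-tr-id}. Your extra step of passing to $\kbar$ and invoking Lemma~\ref{lem:heis}(ii) to place the $B_I$ in the setting of Lemma~\ref{lem-tr-id} simply makes explicit what the paper leaves implicit in the phrase ``dual quadratic form (in the sense of Lemma~\ref{lem-tr-id})''.
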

\begin{proof}
  By~\eqref{display:222} and~\eqref{222->kum} we have
  $\Phi(\x,\x,\lambda) = P(x_1, \ldots,x_4)$.  By
  Lemma~\ref{lem:strconsts}(i) this factors as $P = Q^2$ where
  $Q(\x) = \sum_{i \leqslant j} b_{ij} x_i x_j$ and the coefficient of
  $x_4^2$ is $1$.  By Lemma~\ref{lem:strconsts}(ii) we have
  \begin{equation}
  \label{mult-eqn}
  Q(\x) Q(\y) = \left( \sum_{i \leqslant j} b_{ij} x_i x_j \right)
  \left( \sum_{k \leqslant l} b_{kl} y_k y_l \right) =
  \sum_{m \leqslant n} \Phi_{mn}(\x,\y) b_{mn}.
  \end{equation}
  On the other hand by Lemma~\ref{lem-tr-id} we have
  \begin{equation}
  \label{dual-eqn}
   \tr_{L_{10}/k} ( b_{mn} Q^*(\z)) = \left\{ \begin{array}{ll}
  z_m z_n & \text{ if } m = n, \\
  2 z_m z_n & \text{ if } m \not= n. \end{array} \right.
  \end{equation}
  Combining~\eqref{mult-eqn} and~\eqref{dual-eqn} gives
  \[  \tr_{L_{10}/k} (Q(\x) Q(\y) Q^*(\z)) = \sum_{m,n=1}^4 \Phi_{mn}(\x,\y)
    z_m z_n =  \Phi(\x,\y,\z). \qedhere \]
\end{proof}

\begin{Remark}
  As noted in \cite[page~24]{CF}, the doubling map on the Kummer
  surface $\K = \{ F = 0 \} \subset \PP^3$ is given by
  \[ (x_1 : x_2 : x_3 : x_4) \mapsto (\Phi_{14}(\x,\x) :
    \Phi_{24}(\x,\x) : \Phi_{34}(\x,\x) : \tfrac{1}{2}
    \Phi_{44}(\x,\x)). \] The quartic form $P$ in
  Proposition~\ref{prop:222} is a linear combination of the Heisenberg
  invariant quartics $F, \Phi_{14}(\x,\x), \ldots, \Phi_{44}(\x,\x)$
  that factors as the square of a quadratic form.  By
  Lemma~\ref{lem-heis} this quadratic form $Q$ corresponds to a
  $(3,3)$-partition of the Weierstrass points. In particular, an
  alternative way to compute $Q$ is as the quadratic form defined by
  the symmetric matrix~\eqref{3-3}.
\end{Remark}

\subsection{A formula for the twisted $(2,2,2)$-form}
\label{sec:get222}
In the last section we gave a formula for the untwisted $(2,2,2)$-form
$\Phi$.  We now modify this to compute the twisted $(2,2,2)$-form
$\Phi_{\eps,\eta}$.

\begin{Proposition}
\label{prop:quartic-factors}
Let $\lambda_1, \ldots, \lambda_4 \in L_{10}$ be given
by~\eqref{def:la}. Let $\eps \in S^{(2)}(\J/k)$ be represented by a
model $(G,H)$, and let $F_0, \ldots, F_4 \in k[x_1, \ldots,x_4]$ be
the associated quartic forms, as defined in Section~\ref{sec:tw-kum}.
Then the quartic form
\begin{equation}
  \label{def:P}
  P_\eps(x_1, \ldots,x_4)  = (\lambda_2^2 - \lambda_1 \lambda_3)
  F_0(x_1, \ldots,x_4) + \sum_{i=1}^4 \lambda_i F_i(x_1, \ldots,x_4)
\end{equation}
factors as
\begin{equation}
\label{eqn:alpha}
P_\eps(x_1, \ldots, x_4) = \alpha_\eps \, Q_\eps(x_1, \ldots, x_4)^2
\end{equation}
where $\alpha_\eps \in L_{10}^\times$ and
$Q_\eps \in L_{10} [x_1, \ldots, x_4]$ is a quadratic form.
\end{Proposition}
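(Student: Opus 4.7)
The plan is to reduce to Proposition~\ref{prop:222}, which handles the analogous factorisation for the standard untwisted model $(G,H_0)$ of Section~\ref{sec:desing}.  The key ingredients are the covariance of the quartic forms $F_i$ under the $\PGL_4$-action on models, together with the fact that over $\kbar$ every model becomes equivalent to $(G,H_0)$, since the geometric $2$-Selmer group is trivial.

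First I would work over $\kbar$ and choose $R \in \GL_4(\kbar)$ with $H = (H_0 \circ \wedge^2 R)/\det R$.  Applying the covariance identity of Section~\ref{sec:tw-kum} with $\lambda = 1/\det R$, the factor $(\lambda \det R)^{d_i}$ equals $1$ uniformly in $i$, so that $F_i(H)(\x) = F_i(H_0)(R\x)$ for each $i$.  Substituting into~\eqref{def:P} gives $P_\eps(\x) = P^{(0)}(R\x)$, where $P^{(0)}$ is the quartic built from the $F_i(H_0)$ by the same recipe.

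Next I would identify $P^{(0)}$ with a scalar multiple of the quartic $P$ of Proposition~\ref{prop:222}.  Unpacking the defining formulae for $F_0(H_0), \ldots, F_4(H_0)$ in Section~\ref{sec:tw-kum} and matching them with $F, \Phi_{14}(\x,\x), \ldots, \Phi_{44}(\x,\x)$ --- using Lemma~\ref{lem:quartics}(ii), the fact that in the untwisted case the covering map $\K_\eps \to \K$ is the doubling map $(\Phi_{14} : \Phi_{24} : \Phi_{34} : \tfrac{1}{2}\Phi_{44})(\x,\x)$, and that $F_0(H_0)$ is proportional to the Kummer equation $F$ --- pins down the normalisations so that $P^{(0)}$ agrees with $P$ up to a scalar.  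Proposition~\ref{prop:222} then gives $P^{(0)} = c\,Q(\x)^2$ with $c \in \kbar^\times$ and $Q \in L_{10}[x_1, \ldots, x_4]$, whence $P_\eps(\x) = c\,Q(R\x)^2$ over $\kbar \otimes_k L_{10}$.

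Finally, $P_\eps$ itself has coefficients in $L_{10}$, since the $\lambda_i$ lie in $L_{10}$ and the $F_i(H)$ have coefficients in $k$.  Because $P_\eps$ becomes a nonzero scalar multiple of a squared quadratic form after base change to $\kbar \otimes_k L_{10}$, the standard unique factorisation argument applied to each field factor of $L_{10}$ descends this to a factorisation $P_\eps = \alpha_\eps Q_\eps^2$ with $\alpha_\eps \in L_{10}^\times$ and $Q_\eps \in L_{10}[x_1, \ldots, x_4]$.  The main obstacle will be the normalisation matching in the third paragraph: this amounts to a direct but somewhat lengthy computation with the explicit formulae for the $F_i$ given in Section~\ref{sec:tw-kum} and Flynn's biquadratic forms; a more conceptual alternative would be to identify $Q_\eps$ intrinsically via the twisted analogue of the symmetric matrices in~\eqref{3-3}, built from the skew-symmetric matrix $A_\eps$ of Remark~\ref{rem:MT}, and then read off $\alpha_\eps$ from the normalisations forced by the choice of model.
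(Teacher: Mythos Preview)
Your proposal is correct and follows essentially the same route as the paper: extend the base field, use covariance of the $F_i$ to reduce to the untwisted model of Section~\ref{sec:desing}, and then invoke Proposition~\ref{prop:222}. The paper dispatches your ``normalisation matching'' obstacle by simply recording (from a direct computation) that in the untwisted case $F_0,\ldots,F_4$ are, up to an overall scalar, the quartics $2F,\,2\Phi_{14}(\x,\x),\,2\Phi_{24}(\x,\x),\,2\Phi_{34}(\x,\x),\,\Phi_{44}(\x,\x)$, so $P_\eps$ becomes exactly the $P$ of Proposition~\ref{prop:222}; the descent from $\kbar\otimes_k L_{10}$ to $L_{10}$ that you spell out is left implicit in the paper's ``we are free to extend our field''.
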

\begin{proof}
  For the proof we are free to extend our field $k$.  Since the $F_i$
  are covariants we may then reduce to the case where $H$ is as given
  in Section~\ref{sec:desing}.  In this case we find that
  $F_0, \ldots, F_4$ are (up to an overall scaling) the quartics
  \[ 2F, \,\, 2\Phi_{14}(\x,\x), \,\, 2\Phi_{24}(\x,\x), \,\,
    2\Phi_{34}(\x,\x), \,\, \Phi_{44}(\x,\x). \] We are done by the
  first part of Proposition~\ref{prop:222}.
\end{proof}

\begin{Theorem}
  \label{thm:twisted222}
  Let $\eps,\eta \in S^{(2)}(\J/k)$.  The twisted $(2,2,2)$-form is
  given by
  \[ \Phi_{\eps,\eta}(\x,\y,\z) = \tr_{L_{10}/k} (\mu Q_\eps(\x)
    Q_\eta(\y) Q_{\eps+\eta}^*(\z)) \] for some $s \in k^\times$ and
  $\mu \in L_{10}^\times$ satisfying
  $\alpha_\eps \alpha_\eta/ \alpha_{\eps+\eta} = s \mu^2$.
\end{Theorem}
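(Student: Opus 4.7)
The plan is to deduce the formula from the untwisted version in Proposition~\ref{prop:222} by extending scalars to $\kbar$, tracking the scaling factors that appear, and then descending via Galois invariance. The identity $\alpha_\eps\alpha_\eta/\alpha_{\eps+\eta} = s\mu^2$ will emerge naturally as the bookkeeping of these scalars.

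First I would extend scalars to $\kbar$, so that the classes $\eps,\eta,\eps+\eta$ all trivialize and the $\kbar$-linear maps $\psi_\eps,\psi_\eta,\psi_{\eps+\eta}\in\GL_4(\kbar)$ of \eqref{twist:J->K} exist. By Lemma~\ref{lem:twistedgplaw}, after possibly adjusting $\psi_{\eps+\eta}$ by a $2$-torsion translation, these intertwine the twisted group law $\mu\colon\J_\eps\times\J_\eta\to\J_{\eps+\eta}$ with the ordinary group law on $\J$. Comparing the characterizing properties \eqref{eqn:prod1} and \eqref{eqn:prod2} yields
\[
\Phi_{\eps,\eta}(\x,\y,\z) \;=\; c\cdot\Phi\bigl(\psi_\eps\x,\,\psi_\eta\y,\,\psi_{\eps+\eta}^{-T}\z\bigr)
\]
for some $c\in\kbar^\times$. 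Independently, the covariance of $F_0,\ldots,F_4$ under the $\PGL_4$-action on models (the same device used in the proof of Proposition~\ref{prop:quartic-factors}) gives $P_\eps(\x) = \rho_\eps\cdot P(\psi_\eps\x)$ for some $\rho_\eps\in\kbar^\times$. Combined with $P_\eps = \alpha_\eps Q_\eps^2$ and the untwisted factorization $P = Q^2$, this forces $Q_\eps(\x) = \tau_\eps\cdot Q(\psi_\eps\x)$ in $(L_{10}\otimes\kbar)[\x]$, with $\tau_\eps^2 = \rho_\eps/\alpha_\eps$, and analogously for $\eta$ and $\eps+\eta$.

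Next I would pass through the dualization: the relation $Q_{\eps+\eta}(\y) = \tau_{\eps+\eta}\cdot Q(\psi_{\eps+\eta}\y)$ inverts at the level of symmetric matrices to give $Q^*(\psi_{\eps+\eta}^{-T}\z) = \tau_{\eps+\eta}\cdot Q_{\eps+\eta}^*(\z)$. Substituting everything into Proposition~\ref{prop:222} applied at $(\psi_\eps\x,\psi_\eta\y,\psi_{\eps+\eta}^{-T}\z)$ and collecting scalars,
\[
\Phi_{\eps,\eta}(\x,\y,\z) \;=\; \tr_{L_{10}\otimes\kbar/\kbar}\!\bigl(\mu\cdot Q_\eps(\x)\,Q_\eta(\y)\,Q_{\eps+\eta}^*(\z)\bigr)
\]
with $\mu = c\,\tau_{\eps+\eta}/(\tau_\eps\tau_\eta)\in(L_{10}\otimes\kbar)^\times$. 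Squaring yields $\mu^2 = s\cdot\alpha_\eps\alpha_\eta/\alpha_{\eps+\eta}$ where $s = c^2\rho_{\eps+\eta}/(\rho_\eps\rho_\eta)\in\kbar^\times$, which is precisely the stated identity (after relabelling $s$).

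Finally I would descend: since $\Phi_{\eps,\eta}$ has $k$-rational coefficients and the $Q_\bullet$ lie in $L_{10}[\,\cdot\,]$, Galois invariance together with the non-degeneracy of the $k$-linear map $\mu\mapsto\tr_{L_{10}/k}(\mu\cdot Q_\eps Q_\eta Q_{\eps+\eta}^*)$ into the space of $(2,2,2)$-forms (checkable by specialisation) forces the $\mu$ constructed above to already be Galois-invariant, hence in $L_{10}^\times$, and then $s\in k^\times$. The main obstacle is the middle step: cleanly identifying the scalar $\tau_\eps\in(L_{10}\otimes\kbar)^\times$ from the covariant structure of the $F_i$ and propagating it correctly through the dualization so that it lines up with the $c$ coming from the group law comparison. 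Once that bookkeeping is settled, the $\mu^2$-identity drops out and the descent is routine.
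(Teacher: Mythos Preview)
Your proposal is correct and follows essentially the same route as the paper's proof. Both arguments pass to $\kbar$, write $\Phi_{\eps,\eta}$ as a scalar multiple of $\Phi(\psi_\eps\x,\psi_\eta\y,\psi_{\eps+\eta}^{-T}\z)$, apply Proposition~\ref{prop:222}, track the scalars through $P_\bullet = \alpha_\bullet Q_\bullet^2$, and then descend using linear independence of the ten $(2,2,2)$-forms. The only differences are cosmetic: the paper normalises $\psi_\eps$ so that $P\circ\psi_\eps = P_\eps$ exactly (eliminating your auxiliary scalars $\rho_\eps$ and $c$, so that $\tau_\eps = \alpha_\eps^{-1/2}$), and for the injectivity step it invokes Lemma~\ref{lem:heis} rather than leaving it as ``checkable by specialisation''.
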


\begin{proof}
  Let $\psi_\eps \in \GL_4(\kbar)$ be the matrix in~\eqref{twist:J->K}
  scaled so that the 
  quartic forms in
  Propositions~\ref{prop:222} and~\ref{prop:quartic-factors} are
  related by $P \circ \psi_\eps = P_\eps$. The corresponding
  quadratic forms are then related by 
  $Q \circ \psi_\eps = \sqrt{\alpha_\eps} \, Q_\eps$. We have the same
  relations with $\eps$ replaced by $\eta$ or $\eps+\eta$.
  
  We write $\tr$ for the trace from
  $\overline{L}_{10} = L_{10} \otimes_k \kbar$ to
  $\kbar$.  First by definition and then using
  Proposition~\ref{prop:222}, the twisted $(2,2,2)$-form
  $\Phi_{\eps,\eta}$ is a scalar multiple of
  \begin{align*}
    \Phi(\psi_\eps \, \x , \psi_\eta \, \y, \psi_{\eps+\eta}^{-T} \, \z)
    &= \tr \big( (Q \circ \psi_\eps) (\x) \,(Q \circ \psi_\eta) (\y) \,
      (Q \circ \psi_{\eps + \eta})^* (\z) \big) \\
    &= \tr \big( \sqrt{\alpha_\eps \alpha_\eta /\alpha_{\eps+\eta}} \,
      Q_\eps (\x) \, Q_\eta (\y) \, Q_{\eps + \eta}^* (\z) \big).
  \end{align*}
  Viewing the trace as a sum over conjugates, this last formula writes
  $\Phi_{\eps,\eta}$ as a linear combination of 10 $(2,2,2)$-forms.
  It may be checked using Lemma~\ref{lem:heis} that these 10 forms
  are linearly independent.  Since $\Phi_{\eps,\eta}$ is defined over
  $k$ this forces
  $\sqrt{\alpha_\eps \alpha_\eta/ \alpha_{\eps+\eta}} = s_0 \mu$ for
  some $s_0 \in \kbar^\times$ and $\mu \in L_{10}^\times$.  Squaring
  both sides then shows that $s_0^2 \in k$.
\end{proof}

\begin{Remark}
\label{rem:S6}
Theorem~\ref{thm:twisted222} is not always sufficient to compute the
twisted $(2,2,2)$-form since the equation
$\alpha_\eps \alpha_\eta/ \alpha_{\eps+\eta} = s \mu^2$ does not
determine $s$ and $\mu$ uniquely.  This is not a problem if for
example $\Gal(f) = S_6$.  In this case $L_{10}$ is a field, and it has
no quadratic subfields.  It follows that $\mu$ is uniquely determined
up to multiplication by an element of $k^\times$. This is sufficient
for computing the twisted $(2,2,2)$-form, again up to multiplication
by an element of $k^\times$, but as explained in
Remark~\ref{rem-g}(ii) this scaling is not important.
\end{Remark}

In order to handle arbitrary Galois actions on the Weierstrass points
it remains to compute $\mu$ in a more systematic way. Our answer
depends on the scaling of the quadratic forms $Q_\eps$, $Q_\eta$ and
$Q_{\eps+\eta}$, and indeed such a dependence is necessary since
in~\eqref{eqn:alpha} we are free to multiply $Q_\eps$ by any element
of $\mu_2(L_{10})$ without changing $\alpha_\eps$.

Let $W$ be the set of Weierstrass points, and $\Gamma$ the set of
$(3,3)$-partitions of~$W$.  Let
$X = \{ x \in \Map(W,\mu_2) : \prod_{\theta \in W} x(\theta) = 1 \}$.
There is a commutative diagram with exact rows
\begin{equation*}
  \begin{aligned}
 \xymatrix{ 0 \ar[r] & \mu_2 \ar[r] \ar@{=}[d] &
    X \ar[r] \ar[d]_{\iota} & \J[2] \ar[d] \ar[r] & 0 \\
 0 \ar[r] & \mu_2 \ar[r] & \Map(\Gamma,\mu_2)
 \ar[r] & \Map(\Gamma,\mu_2)/\mu_2 \ar[r] & 0 }
\end{aligned}
\end{equation*}
where the middle vertical map is given by
$\iota(x)(\gamma_1 | \gamma_2) = \prod_{\theta \in \gamma_1}
x(\theta).$ Taking Galois cohomology gives another commutative diagram
with exact rows
\begin{equation}
  \label{cd}
  \begin{aligned}
    \xymatrix{ k^\times/(k^\times)^2 \ar[r] \ar@{=}[d] & H^1(k,X)
      \ar[r] \ar[d]_{\iota_*} & H^1(k,\J[2]) \ar[d] \ar[r]^-{\cup c}
      & \Br(k) \ar@{=}[d] \\
      k^\times/(k^\times)^2 \ar[r] & L_{10}^\times/(L_{10}^\times)^2
      \ar[r] & H^1(k,\mu_2(\overline{L}_{10})/\mu_2) \ar[r] & \Br(k) }
  \end{aligned}
\end{equation}
We identify
\[ H^1(k,X) \isom \frac{\{ (\xi,m)
    \in L^\times \times k^\times
    | N_{L/k}(\xi) = m^2 \}}{ \{(\nu^2, N_{L/k}(\nu)) |
        \nu \in L^\times \}}. \]
so that the top row (where the first map is now $r \mapsto (r,r^3)$)
gives us the isomorphism~\eqref{xi-m-pairs}.
      
\begin{Definition}
  \label{def:w}
  For $(\xi,m) \in  L^\times \times k^\times$ with $N_{L/k}(\xi) = m^2$,
  let $w(\xi,m) \in L_{10} = \Map_k(\Gamma,\kbar)$ be given by
  \[ (\theta_1\theta_2\theta_3 | \theta_4 \theta_5 \theta_6) \mapsto
    \xi(\theta_1) \xi(\theta_2) \xi(\theta_3) + \xi(\theta_4)
    \xi(\theta_5) \xi(\theta_6) + 2m \] and likewise under all
  permutations of $\theta_1, \ldots, \theta_6$.
\end{Definition}

\begin{Lemma}
  \label{lem:w}
  The map $\iota_* : H^1(k,X) \to L_{10}^\times/(L_{10}^\times)^2$
  sends $(\xi,m) \mapsto w(\xi,m)$, provided that $w(\xi,m)$ is a
  unit.
\end{Lemma}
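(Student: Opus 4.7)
The plan is to match the Galois cocycle representing $(\xi,m) \in H^1(k,X)$, pushed along $\iota$, with the Kummer cocycle representing $w(\xi,m)$. Under the identification $\overline{L} := L \otimes_k \kbar \cong \kbar^W$, the element $\xi$ corresponds to a tuple $(\xi(\theta))_{\theta \in W}$. Since $N_{L/k}(\xi) = m^2$, we can choose $\nu \in \overline{L}^\times$ with $\nu(\theta)^2 = \xi(\theta)$ and $\prod_{\theta \in W} \nu(\theta) = m$. The class of $(\xi,m)$ in $H^1(k,X)$ is then represented by the cocycle $\sigma \mapsto x_\sigma$ with $x_\sigma(\theta) = \sigma(\nu)(\theta)/\nu(\theta) \in \mu_2$; this lies in $X$ because $\prod_\theta x_\sigma(\theta) = \sigma(m)/m = 1$.

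The key observation is the algebraic identity that makes $w(\xi,m)$ an explicit square over $\overline{L}_{10}$. For each $\gamma = (\gamma_1|\gamma_2) \in \Gamma$ with $\gamma_1 = \{\theta_1,\theta_2,\theta_3\}$ and $\gamma_2 = \{\theta_4,\theta_5,\theta_6\}$, set $A_\gamma = \nu(\theta_1)\nu(\theta_2)\nu(\theta_3)$ and $B_\gamma = \nu(\theta_4)\nu(\theta_5)\nu(\theta_6)$. Since $A_\gamma B_\gamma = N_{L/k}(\nu) = m$, expanding gives
\[
(A_\gamma + B_\gamma)^2 = \xi(\theta_1)\xi(\theta_2)\xi(\theta_3) + \xi(\theta_4)\xi(\theta_5)\xi(\theta_6) + 2m = w(\xi,m)(\gamma).
\]
The expression $A_\gamma + B_\gamma$ is symmetric in $\gamma_1,\gamma_2$ (because $A_\gamma B_\gamma = m$ is $k$-rational), hence defines an element $u \in \overline{L}_{10}^\times$ with $u^2 = w(\xi,m)$; the hypothesis that $w(\xi,m)$ is a unit guarantees $u$ is invertible.

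To conclude, I would compute the Kummer cocycle $\sigma \mapsto \sigma(u)/u$. Using $\sigma(\nu(\sigma^{-1}\theta)) = x_\sigma(\theta)\nu(\theta)$, a direct expansion gives
\[
\sigma(u)(\gamma) = x_\sigma(\theta_1)x_\sigma(\theta_2)x_\sigma(\theta_3)\, A_\gamma + x_\sigma(\theta_4)x_\sigma(\theta_5)x_\sigma(\theta_6)\, B_\gamma.
\]
The two triple products of signs are equal because $\prod_\theta x_\sigma(\theta) = 1$, and their common value is precisely $\iota(x_\sigma)(\gamma)$. Therefore $\sigma(u)/u = \iota(x_\sigma)$, which identifies the Kummer class of $w(\xi,m)$ in $L_{10}^\times/(L_{10}^\times)^2$ with the image under $\iota_*$ of the class of $(\xi,m)$ in $H^1(k,X)$.

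The proof is essentially a bookkeeping exercise once one spots the factorisation $w(\xi,m)(\gamma) = (A_\gamma + B_\gamma)^2$; the only real care needed is in tracking the Galois action on $\overline{L} \cong \kbar^W$ (where $\sigma$ both permutes the factors via its action on $W$ and acts on $\kbar$) and in verifying well-definedness under the $\gamma_1 \leftrightarrow \gamma_2$ symmetry. The unit hypothesis merely rules out the degenerate case $A_\gamma + B_\gamma = 0$; otherwise one would first replace $(\xi,m)$ by an equivalent representative, which is permissible since $\iota_*$ is well-defined on $H^1(k,X)$.
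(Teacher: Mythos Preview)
Your proof is correct and follows essentially the same approach as the paper: both choose square roots $\nu(\theta)$ of $\xi(\theta)$ with product $m$, observe that $u(\gamma_1|\gamma_2) = \prod_{\theta\in\gamma_1}\nu(\theta) + \prod_{\theta\in\gamma_2}\nu(\theta)$ is a square root of $w(\xi,m)$, and verify that $\sigma(u)/u = \iota(\sigma(\nu)/\nu)$. You have in fact spelled out the key identity $(A_\gamma+B_\gamma)^2 = w(\xi,m)(\gamma)$ and the Galois computation more explicitly than the paper does.
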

\begin{proof}
  Let $(\xi,m) \in L^\times \times k^\times$ with
  $N_{L/k}(\xi) = m^2$, and let $w = w(\xi,m)$.  We pick square roots
  $\sqrt{\xi(\theta)}$ in such a way that
  $\prod_{\theta \in W} \sqrt{\xi(\theta)} = m$. We can then take
  $\sqrt{\xi} \in \Map(W,\kbar) = L \otimes_k \kbar$ to be given by
  $\theta \mapsto \sqrt{\xi(\theta)}$, and
  $\sqrt{w} \in \Map(\Gamma,\kbar) = L_{10} \otimes_k \kbar$ to be
  given by
  \[ (\gamma_1 | \gamma_2 ) \mapsto \prod_{\theta \in \gamma_1}
    \sqrt{\xi(\theta)} + \prod_{\theta \in \gamma_2}
    \sqrt{\xi(\theta)} \] We find that
  $\sigma \sqrt{w}/\sqrt{w} = \iota (\sigma \sqrt{\xi}/\sqrt{\xi})$
  for all $\sigma \in \Gal(\kbar/k)$, and from this the lemma follows.
\end{proof}

\begin{Proposition}
  \label{lem:rank1}
  Let $\eps \in S^{(2)}(\J/k)$ be represented by a model $(G,H)$.  Let
  $\bG$ and $\bH$ be the $6 \times 6$ symmetric matrices corresponding
  to $G$ and $H$, and let $S$ be the quadratic form defined by
  $\bH \bG^{-1} \bH$.  Let $Z$ and $W$ be the $4 \times 4$ skew
  symmetric matrices defined in~\eqref{def:ZW}, and let $Y$ be the
  analogue of $W$ where $H$ is replaced by $S$. Let
  \[ A_{ij}(u_0, \ldots, u_5) = (W^* Z Y^*)_{ij} + (W^* Z
    Y^*)_{ji}. \] Then the forms $\Xi$ and $M$ in Lemma~\ref{lem:XiM}
  satisfy
  \begin{equation}
  \label{rank1identity}
   -f_6 w(\Xi,M) = \alpha_\eps \left( \sum c_{ij} A_{ij} \right)^2
  \end{equation}
  where $\alpha_\eps$ and $Q_\eps = \sum c_{ij} x_i x_j$ are as given in
  Proposition~\ref{prop:quartic-factors}.
\end{Proposition}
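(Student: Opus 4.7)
The plan is to reduce \eqref{rank1identity} from an arbitrary model $(G,H)$ to the standard untwisted model of Section \ref{sec:desing} by a covariance argument, and then verify the resulting polynomial identity either by direct computation or by matching both sides cohomologically via Lemma \ref{lem:w}.

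First I would establish that both sides of \eqref{rank1identity} are $\PGL_4$-covariant with the same weight. Under $P \in \GL_4$ acting on $(G,H)$ via Definition \ref{def:model}(ii), the matrices $Z$, $W$ and $Y$ each transform by a rule involving $P$, $P^T$ and $\det P$, so the triple product $W^* Z Y^*$ and its symmetrisation $A_{ij}$ transform in a predictable way that can be read off from \eqref{pf-det} and \eqref{star-adj}. The coefficients $c_{ij}$ of $Q_\eps$ inherit a matching transformation rule from the covariance of $F_0,\ldots,F_4$ proved in Lemma \ref{lem:quartics}, while $\Xi$, $M$ and the scalar $\alpha_\eps$ transform via the normalisations tracked in Lemma \ref{lem:XiM} and Proposition \ref{prop:quartic-factors}. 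Matching the resulting factors of $\det P$ on both sides should show that \eqref{rank1identity} is preserved, so it suffices to check it for a single model in each $\kbar$-equivalence class.

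Next I would specialise to the standard model of Section \ref{sec:desing}, corresponding to $\eps = 0$. There $Z$, $W$ and $Y$ have the explicit forms in \eqref{def:ZW}, the identity $\bS = \bH \bG^{-1} \bH$ holds, and the Pfaffian relations $\Pf(Z)=G$, $\Pf(W)=S$, $\Pf(\lambda Z + \mu W) = \lambda^2 G - 2\lambda\mu H + \mu^2 S$ give a great deal of algebraic structure to $W^*ZY^*$. In this case, $P_\eps$ becomes the quartic $P$ of Proposition \ref{prop:222}, so $\alpha_\eps$ is known and $Q_\eps$ is (up to the chosen scaling) the $(3,3)$-quadratic form whose symmetric matrix is \eqref{3-3}. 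The right-hand side of \eqref{rank1identity} is then completely explicit, and so is the left: $\Xi$ and $M$ come from Lemma \ref{lem:XiM}, and $w(\Xi,M)$ is expanded using Definition \ref{def:w}.

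A conceptual guide for the computation comes from Lemma \ref{lem:w}: the class of $w(\Xi,M)$ in $L_{10}^\times/(L_{10}^\times)^2$ coincides with $\iota_\ast(\eps)$, and the same class underlies the factorisation in Proposition \ref{prop:quartic-factors}, so $-f_6\, w(\Xi,M)/\alpha_\eps$ is a priori a square in $L_{10}\otimes k(u_0,\ldots,u_5)$. The real content of the proposition is the assertion that the square root is the specific cubic $\sum c_{ij} A_{ij}$ built from $W^*ZY^*$, with the correct sign and with the constant $-f_6$ reflecting the normalisation $\det(x\bG - \bH) = -f_6^{-1} f(x)$. I expect this last identification to be the main obstacle: matching the cohomological datum to a designated polynomial square root, rather than just to some square root, is most cleanly settled by a direct computer-algebra verification on the standard model, after which the covariance established in the first step extends \eqref{rank1identity} to all models and thus to all $\eps \in S^{(2)}(\J/k)$.
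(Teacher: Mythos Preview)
Your proposal is correct and takes essentially the same approach as the paper: reduce by $\PGL_4$-covariance to the standard model of Section~\ref{sec:desing}, then verify the resulting polynomial identity by computer algebra. The paper's only additional device is to rewrite $\alpha_\eps\bigl(\sum c_{ij}A_{ij}\bigr)^2$ as $\widetilde{P}_\eps(A_{11},\ldots,A_{44})$, where $\widetilde{P}_\eps$ is the canonical rank-one quadratic lift of $P_\eps$ to ten variables, so that the right-hand side becomes a polynomial in the coefficients of $H$ without first factoring $P_\eps$ into $\alpha_\eps Q_\eps^2$.
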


\begin{proof}
  Since $P_\eps \in L_{10}[x_1, \ldots, x_4]$ is a quartic form, we may write
  \[ P_\eps(x_1, \ldots,x_4) = \widetilde{P}_{\eps}(x_1^2,x_1 x_2,
    \ldots, x_4^2)\] where $\widetilde{P}_{\eps}$ is a quadratic form
  in $10$ variables.  This does not determine $\widetilde{P}_{\eps}$
  uniquely, but in view of~\eqref{eqn:alpha} there is a canonical
  choice, namely the one for which the associated symmetric bilinear
  form sends
\begin{equation*}
  (x_1^2, x_1 x_2, \ldots, x_4^2 ; y_1^2, y_1 y_2, \ldots, y_4^2 )
  \mapsto \alpha_\eps Q_\eps(x_1, \ldots, x_4) Q_\eps(y_1, \ldots,y_4).
\end{equation*}

The proof of~\eqref{rank1identity} is now reduced to showing that
\begin{equation*}
-f_6 w(\Xi,M) = \widetilde{P}_{\eps}(A_{11},A_{12}, \ldots, A_{44}).
\end{equation*}
Each side is a form of degree $6$ in $u_0, \ldots, u_5$ whose
coefficients have degree $15$ in the coefficients $h_{ij}$ of the
model. We checked this by computer algebra for the model $(G,H)$ in
Section~\ref{sec:desing}.  The general case follows by carefully
keeping track of the effect of a change of coordinates.
\end{proof}

We solve for $s$ and $\mu$ in Theorem~\ref{thm:twisted222} as follows.
First we pick $(\xi_\eps,m_\eps)$ by specialising the forms $(\Xi,M)$
in Lemma~\ref{lem:XiM}.  Specialising~\eqref{rank1identity} at the
same point gives $-f_6 w(\xi_\eps,m_\eps) = \alpha_\eps \chi_\eps^2$ for
some explicit $\chi_\eps \in L_{10}$, which is a unit if we
specialised at a sufficiently general point. We repeat everything with
$\eps$ replaced by $\eta$ or $\eps+\eta$.  By~\eqref{xi-m-pairs} we
have
\begin{equation}
\label{sqrt}
 (\xi_\eps \xi_\eta \xi_{\eps+\eta}, m_\eps m_\eta m_{\eps+\eta})
 = (r \nu^2, r^3 N_{L/k}(\nu)),
\end{equation}
for some $r \in k^\times$ and $\nu \in L^\times$.  We then use the
next lemma to solve for $s$ and $\mu$ satisfying
$\alpha_\eps \alpha_\eta/ \alpha_{\eps+\eta} = s \mu^2$.

\begin{Lemma}
\label{lem:hom}
Let $(\xi_i,m_i) \in L^\times \times k^\times$ with $N_{L/k}(\xi_i) = m_i$
for $i=1,2,3$.
  If
  \begin{equation*}
    (\xi_1 \xi_2 \xi_3,m_1 m_2 m_3) = (r \nu^2, r^3 N_{L/k}(\nu))
      \end{equation*} then
  \[ \prod_{i=1}^3 w(\xi_i,m_i) = s \mu^2 \] for some
  $s \in k$ and $\mu \in L_{10}$. Moreover we may
  take $s = r^3$ and \begin{equation}
    \label{eqn:kappa} \mu(\gamma_1 | \gamma_2)
    = \left( \prod_{\theta \in \gamma_1} \nu(\theta) \right)
    \prod_{i=1}^3 \left(
      1 + \frac{m_i}{\prod_{\theta \in \gamma_1} \xi_i(\theta)} \right).
    \end{equation}
\end{Lemma}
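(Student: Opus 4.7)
The plan is to reduce the claim to a one-line algebraic identity, applied partition-by-partition. Fix a partition $\gamma = (\gamma_1|\gamma_2) \in \Gamma$, and for each $i=1,2,3$ write
\[ a_i = \prod_{\theta \in \gamma_1} \xi_i(\theta), \qquad b_i = \prod_{\theta \in \gamma_2} \xi_i(\theta), \]
so that $a_i b_i = N_{L/k}(\xi_i) = m_i^2$. By Definition~\ref{def:w} we have $w(\xi_i,m_i)(\gamma) = a_i + b_i + 2m_i$, and using $b_i = m_i^2/a_i$ this simplifies to the key identity
\[ w(\xi_i,m_i)(\gamma) = \frac{(a_i + m_i)^2}{a_i}. \]

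Taking the product over $i$ and using $\xi_1\xi_2\xi_3 = r\nu^2$, we find
\[ \prod_{i=1}^3 a_i \,=\, \prod_{\theta \in \gamma_1} (r \nu(\theta)^2) \,=\, r^3 \Bigl(\prod_{\theta \in \gamma_1} \nu(\theta)\Bigr)^2, \]
so that $\prod_i w(\xi_i,m_i)(\gamma) = r^3 \,\mu(\gamma)^2$ with
\[ \mu(\gamma) \,=\, \frac{\prod_i (a_i + m_i)}{r^3 \prod_{\theta \in \gamma_1} \nu(\theta)} \,=\, \Bigl(\prod_{\theta \in \gamma_1} \nu(\theta)\Bigr) \prod_{i=1}^3 \Bigl(1 + \frac{m_i}{a_i}\Bigr), \]
using the factorisation $a_i + m_i = a_i(1 + m_i/a_i)$ in the second step and cancelling one factor of $r^3(\prod \nu(\theta))^2 = \prod a_i$. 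This is the formula \eqref{eqn:kappa}, and we may take $s = r^3$.

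The only thing that is not automatic is that $\mu$ as defined above really lies in $L_{10}$ rather than in the larger \'etale algebra attached to ordered $(3,3)$-partitions; equivalently, that $\mu(\gamma_1|\gamma_2) = \mu(\gamma_2|\gamma_1)$. This is where the hypothesis $m_1 m_2 m_3 = r^3 N_{L/k}(\nu)$ is used (and I expect it to be the main obstacle, in that one must compute carefully). Swapping $\gamma_1$ and $\gamma_2$ replaces $a_i$ by $b_i = m_i^2/a_i$, and a direct computation gives
\[ \frac{\mu(\gamma_1|\gamma_2)}{\mu(\gamma_2|\gamma_1)} \,=\, \frac{\prod_{\theta\in\gamma_1}\nu(\theta)}{\prod_{\theta\in\gamma_2}\nu(\theta)} \cdot \prod_{i=1}^3 \frac{m_i}{a_i} \,=\, \frac{m_1 m_2 m_3}{\bigl(\prod_{\theta\in\gamma_2}\nu(\theta)\bigr)\prod_i a_i} \,=\, \frac{r^3 N_{L/k}(\nu)}{r^3 \prod_{\theta\in W} \nu(\theta)} \,=\, 1, \]
as required. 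Galois equivariance of $\mu$ is then clear: a Galois element permutes the roots of $f$ and acts trivially on $r, m_1, m_2, m_3$, so $\sigma\mu(\gamma) = \mu(\sigma\gamma)$ directly from~\eqref{eqn:kappa}, which is precisely what it means for $\mu$ to lie in $L_{10} = \Map_k(\Gamma,\kbar)$.
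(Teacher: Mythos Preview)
Your proof is correct and follows essentially the same approach as the paper: both rewrite $w(\xi_i,m_i)(\gamma) = (a_i + m_i)^2/a_i$ (the paper phrases this as $a_i(1 + m_i/a_i)^2$), take the product using $\prod_i a_i = r^3\bigl(\prod_{\theta \in \gamma_1}\nu(\theta)\bigr)^2$, and then verify invariance under $\gamma_1 \leftrightarrow \gamma_2$. There is a small slip in your displayed symmetry computation --- the second expression should retain the factor $\prod_{\theta\in\gamma_1}\nu(\theta)$ in the numerator --- but the subsequent equality and the conclusion are correct.
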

\begin{proof}
  If the $w(\xi_i,m_i)$ are units, then the existence of $s$ and $\mu$
  follows from Lemma~\ref{lem:w} and the commutative
  diagram~\eqref{cd}. To prove the result in general and to give
  explicit formulae we proceed as follows.  By Definition~\ref{def:w}
  we have
  \[ w(\xi_i,m_i)(\gamma_1|\gamma_2) = \left( \prod_{\theta \in
        \gamma_1} \xi_i(\theta) \right) \left( 1 +
      \frac{m_i}{\prod_{\theta \in \gamma_1} \xi_i(\theta)}
    \right)^2. \] It follows that
  $\prod_{i=1}^3 w(\xi_i,m_i) = s \mu^2$ where $s$ and $\mu$ are as
  given in the statement of the lemma.  A calculation then shows
  that~\eqref{eqn:kappa} is unchanged when we swap
  $\gamma_1 \leftrightarrow \gamma_2$ and so defines an element of
  $L_{10}$.
\end{proof}

We have now reduced the problem of solving for $s$ and $\mu$ in
Theorem~\ref{thm:twisted222} to that of solving for $r$ and $\nu$
in~\eqref{sqrt}.  This still does not have a unique solution, but our
choices for $\nu$ are more limited than they were for $\mu$.  Indeed
$\nu$ is unique up to multiplying by elements of $k^\times$ (which do
not matter) and elements of
\[ \frac{\{ \nu \in L^\times | \nu^2 \in k^\times,
    N_{L/k}(\nu) = \nu^6 \}}{k^\times} \isom \J[2](k).\]
This leaves us with $\# \J[2](k)$ choices. Since this is the same as
the number of choices we had in defining the twisted group law (see
Remark~\ref{rem:choices}), and so in defining the twisted
$(2,2,2)$-form, all of these choices work.

\section{Implementation and Examples}
\label{sec:examples}

Let $\C$ be a genus $2$ curve defined over a number field $k$, and let
$\J$ be its Jacobian. We have written a program in Magma \cite{magma}
for computing the Cassels-Tate pairing
\[ \langle~,~\rangle_{\CT} : S^{(2)}(\J/k) \times S^{(2)}(\J/k) \to
  \F_2 \] in the case $k = \Q$. We have also worked out some first
examples when $k$ is a small quadratic field.  The main steps are as
follows.

\begin{enumerate}
\item We compute the fake $2$-Selmer group $S_{\rm fake}^{(2)}(\J/k)$
  using the Magma function {\tt TwoSelmerGroup}. This calls programs
  of Stoll (when $k = \Q$) and Bruin (when $k \not= \Q$).  Our
  insistence that $\C$ has an equation $y^2 = f(x)$ where $f$ has
  degree $6$ can slow these programs down, so in the presence of a
  rational Weierstrass point it may be better to work with a degree
  $5$ model and then convert the fake $2$-Selmer group elements back
  to a degree $6$ model.
\item We compute the $2$-Selmer group $S^{(2)}(\J/k)$ from the fake
  Selmer group, representing its elements as pairs $(\xi,m)$ where
  $N_{L/k}(\xi) = m^2$ as in \eqref{xi-m-pairs}. Although this is
  simply a matter of extracting square roots of the norms, we must be
  careful to keep track of the group structure.
\item \label{getmodels} We convert each $2$-Selmer group element
  $(\xi,m)$ to a model $(G,H)$ as described in
  Section~\ref{sec:models}. This involves solving for a
  $3$-dimensional isotropic subspace of a quadratic form in $6$
  variables. When $k = \Q$ we use the existing function {\tt
    IsotropicSubspace} in Magma \cite{magma}.  This uses techniques of
  minimisation and reduction based on a paper of Simon
  \cite{Simon}. When $k$ is a small quadratic field we find that
  similar ideas work reasonably well in practice.
\item \label{minred} We use the action of $\PGL_4(k)$ on the space of
  models to simplify our list of models. This is achieved by a
  combination of minimisation and reduction. The algorithm for
  minimisation will be described in joint work of the first author and
  Mengzhen (August) Liu~\cite{FL}, based on a summer project in 2022. See
  Remark~\ref{rem:redn} below for details of the method we used for
  reduction.
\item \label{start} We select elements $\eps, \eta \in S^{(2)}(\J/k)$
  and aim to compute $\langle \eps, \eta \rangle_{\CT}$. If the value
  of the pairing can be inferred from earlier values computed, or from
  the properties of the pairing listed at the start of
  Section~\ref{sec:CTP} (specifically, it is symmetric, bilinear and
  satisfies Lemma~\ref{lem:PS}), then there is no need for the
  following steps (other than for testing purposes).  The deficient
  places may be computed using the algorithm in \cite[Section
  7]{Stoll}, as implemented in the Magma function {\tt IsDeficient}.
\item Starting from our models representing $\eps$, $\eta$ and
  $\eps+\eta$ we use~\eqref{getF} to compute equations for the twisted
  Kummer surfaces $\K_\eps$, $\K_\eta$ and $\K_{\eps+\eta}$.  As
  described in Section~\ref{sec:tw-kum}, computing $2 \times 2$ minors
  instead of $3 \times 3$ minors (and scaling by $-f_6$) gives us
  pushout forms for $\K_\eps$ and $\K_\eta$, i.e., quartic forms that
  when set equal to a square define the double covers
  $\J_\eps \to \K_\eps$ and $\J_\eta \to \K_\eta$.  Since we need them
  in Step~(\ref{get:ag}) below, we also compute the covariants
  describing the covering maps $\K_\eps \to \K$, $\K_\eta \to \K$ and
  $\K_{\eps + \eta} \to \K$.
\item \label{search} We search for $k$-points on
  $\K_\eta \subset \PP^3$. When $k=\Q$ we use the function {\tt
    PointSearch} in Magma, slightly modified to allow for the fact
  that Kummer surfaces are always singular.  When $k \not= \Q$ we took
  an ad hoc approach (intersecting with random lines) which could
  doubtless be improved.
\item \label{get:ag} Assuming we found a rational point
  $P \in \K_\eta(k)$, we compute $a \in k^\times/(k^\times)^2$ by
  evaluating the pushout form describing $\J_\eta \to \K_\eta$ at this
  point $P$. We then compute the rational function $g$ on $\K_\eps$
  using Theorems~\ref{thm:getg} and~\ref{thm:twisted222}.  There is no
  need to compute the $(2,2,2)$-form in full, since we immediately
  specialise the second and third sets of variables.  These are
  specialised to the coordinates of $P \in \K_\eta \subset \PP^3$ and
  an arbitrary point on $(\PP^3)^\vee$. For the latter we take
  $(c_1,c_2,c_3,c_4) = (1,0,0,0)$.
\item \label{end} We compute $\langle \eps, \eta \rangle_{\CT}$ using
  Theorem~\ref{thm:ctp-formula}(iii). We are careful to pick local
  points on $\K_\eps$ that lift to local points on $\J_\eps$.  The sum
  over all places is reduced to a finite sum as explained in
  Section~\ref{sec:badp} below.
\item We repeat Steps~(\ref{start}) to~(\ref{end}) until we have
  computed the matrix of the Cassels-Tate pairing
  $\langle~,~\rangle_{\CT}$ relative to a basis for $S^{(2)}(\J/k)$ as
  an $\F_2$-vector space.
\end{enumerate}

\begin{Remark}
\label{rem:searching}
For our methods to work, we do not need to find rational points on all
of the twisted Kummer surfaces.  Accordingly, in Step~(\ref{search})
we use a relatively small search bound, and only return to search
again if we do not succeed with other choices of $\eta$.  For this
reason it is recommended to give our program the full $2$-Selmer group
as input, and not some subgroup or just a pair of elements.
\end{Remark}

\begin{Remark}
  The main obstacles to generalising to number fields of larger
  degree are the need to carry out minimisation and reduction in
  Steps~(\ref{getmodels}) and~(\ref{minred}), and the point search in
  Step~(\ref{search}).  We found it particularly helpful if the ring
  of integers of $k$ is a Euclidean domain since then our code for
  minimisation over $\Q$ (which calls the Magma function {\tt
    SmithForm}) could be adapted relatively easily.
\end{Remark}

\begin{Remark}
\label{rem:redn}
For simplicity we take $k=\Q$. In the reduction part of
Step~(\ref{minred}) we are looking for a matrix in $\GL_4(\Z)$
representing a change of coordinates that simplifies our equation for
the twisted Kummer surface $\K_\eps \subset \PP^3$.  The same matrix,
acting as described in Definition~\ref{def:model}(ii), may then be
used to simplify the model $(G,H)$.  To find this matrix in
$\GL_4(\Z)$ we use the natural generalisation
of~\cite[Corollary~6.3]{minred234}, namely we embed
$\Qbar \subset \CC$ and perform lattice reduction with respect to the
Gram matrix
\[ \sum_{T \in \J[2]} \frac{M_T^\dagger M_T\,\, }{ \,\, | \det M_T \,
    |^{1/2}} \] where the matrices $M_T$ were computed in
Remark~\ref{rem:MT}, and the superscript $\dagger$ denotes complex
conjugate transpose.  An alternative method would be to run Magma's
{\tt ReduceCluster} on the set of nodes of $\K_\eps$, but the need to
numerically solve for the nodes makes this method less efficient.

Both of these methods are specific to curves of genus $2$.  A method
that works for $2$-coverings of hyperelliptic Jacobians in general is
currently being developed by Jack Thorne \cite{thorne-red}.
\end{Remark}

Presented with the output of a long and complicated program, it is
natural to wonder whether the answers computed are correct. We used
the methods in Section~\ref{sec:recover} to check that the models
computed in Step~(\ref{getmodels}) do indeed correspond to the Selmer
elements claimed. For computing the pairing itself the following
checks are available. The first was the most important for catching
bugs.

\begin{itemize}
\item In applying Theorem~\ref{thm:ctp-formula}(iii) we may compute
  many $k_v$-points on $\K_\eps$ and check that each makes the same
  local contribution to the pairing.
\item In Step~(\ref{get:ag}) we may make different choices of $P$, of
  $(c_1, \ldots,c_4)$, and of the overall scaling of $g$, and check
  that the pairing does not change.
\item We may check that the pairing as computed is symmetric, bilinear
  and satisfies Lemma~\ref{lem:PS}.
\item We may check that the image of the map
  $\J(k)/2\J(k) \to S^{(2)}(\J/k)$ is contained in the kernel of the
  pairing.
\item We may check that our answers are consistent with
  Remark~\ref{rem:creutz}.
\end{itemize}

In Section~\ref{sec:badp} we explain how the sum over all places in
our formula for the pairing reduces to a finite sum. In
Section~\ref{sec:first-example} we give some examples over $k=\Q$
where our methods improve the upper bound for the rank of $\J(\Q)$
coming from $2$-descent by $2$ or more. In Section~\ref{sec:bs} we
explain how our methods compare to the visibility method employed by
Bruin and Stoll, and how the methods can be combined. Finally in
Section~\ref{sec:lmfdb} we explain how we used our methods to
unconditionally determine the rank of every genus $2$ Jacobian in the
LMFDB.

\subsection{Controlling the bad primes}
\label{sec:badp}
We work over a number field $k$ with ring of integers $\Ok$.  Let $\C$
be a genus $2$ curve with equation
$y^2= f(x) = f_6 x^6 + f_5 x^5 + \ldots + f_1 x + f_0$ where the $f_i$
are in $\Ok$.  Let $S_0$ be a finite set of places of $k$ containing
all the infinite places and all the primes dividing
$2 f_6 \, {\rm disc}(f)$.
\begin{Theorem}
  \label{thm:badp}
  Let $v \not\in S_0$ be a place of $k$. Suppose
  that
  \begin{enumerate}
  \item $\K_\eps$ arises from a model $(G,H)$ (see
    Definition~\ref{def:model}) and $H$ has $v$-adically integral
    coefficients.
  \item $a \in k^\times$ is a $v$-adic unit.
  \item $g = \gamma(x_1, \ldots,x_4)/x_1^2$ where $\gamma \in k[x_1, \ldots,
    x_4]$ is a quadratic form, with all coefficients $v$-adically integral
    and at least one a $v$-adic unit.
  \end{enumerate}
  Then the local contribution at $v$ in Theorem~\ref{thm:ctp-formula}(iii)
  is trivial.
\end{Theorem}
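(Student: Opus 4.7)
The plan is to exploit the standard fact that for $v \nmid 2$ the Hilbert symbol $(a,b)_v$ vanishes whenever both $a$ and $b$ are $v$-adic units (the tame formula). Since $v \notin S_0$ in particular gives $v \nmid 2$, and since hypothesis~(ii) makes $a$ a unit, it suffices to produce a local point $P_v \in \J_\eps(k_v)$, avoiding the zeros and poles of $g$, at which $g(P_v) \in \Ok_{k_v}^{\times}$. By Remark~\ref{rem-g}(iii) it is enough to produce such a point on $\K_\eps(k_v)$ that lifts to $\J_\eps(k_v)$.

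To find one, I would reduce everything modulo~$v$. Hypothesis~(i) makes $\bG$ and $\bH$ integral, so the matrix $B = \Lambda \bH \Lambda^T$, the quartic form defining $\K_\eps$, and the leading $2\times 2$ minor of $B$ defining the double cover $\J_\eps \to \K_\eps$ are all $v$-adically integral. The condition $v \nmid 2 f_6\,{\rm disc}(f)$ combined with~\eqref{det-GH} shows that modulo $v$ the pencil $x\bG - \bH$ still has nonzero discriminant, so the constructions of Sections~\ref{sec:desing} and~\ref{sec:tw-kum} commute with reduction: the integral model of $\K_\eps$ reduces to the twisted Kummer of the abelian surface $\overline{\J_\eps}_{/\F_v}$, which is a principal homogeneous space under the Jacobian $\overline{\J}_{/\F_v}$ of the good reduction of $\C$. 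Lang's theorem (every torsor under a connected algebraic group over a finite field is trivial) then gives $\overline{\J_\eps}(\F_v) \ne \emptyset$, and in fact $\overline{\J_\eps} \isom \overline{\J}$ over $\F_v$.

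Now hypothesis~(iii) supplies a quadratic form $\gamma$ whose reduction $\overline{\gamma}$ is a nonzero form over $\F_v$. Consequently the zero loci of $\overline{\gamma}$ and of $\overline{x_1}$ cut out proper closed subsets of $\overline{\K_\eps}$, and hence of $\overline{\J_\eps}$. A point count on the smooth locus of $\overline{\J_\eps}$ (using that an abelian surface over $\F_v$ has $|\F_v|^2 + O(|\F_v|^{3/2})$ points by Weil) shows that for all but finitely many residue characteristics the complement of these loci contains a smooth $\F_v$-point; the exceptional small residue fields can be absorbed into $S_0$ at the outset, or handled by a direct check. A point in this complement lifts by Hensel's lemma, applied to both the smooth double cover $\J_\eps \to \K_\eps$ and the ambient smooth scheme, to a point $P_v \in \J_\eps(k_v)$ at which $\overline{x_1}$ and $\overline{\gamma}$ are nonvanishing, so $g(P_v) = \gamma(P_v)/x_1(P_v)^2 \in \Ok_{k_v}^{\times}$.

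The main obstacle I anticipate is the bookkeeping needed to verify that the formulae of Section~\ref{sec:tw-kum} which produce $\K_\eps$ and the double cover from the integral data $(G,H)$ actually reduce well modulo~$v$; more precisely, that after the reductions the equation of $\K_\eps$ does not acquire an unexpected common factor and the pushout form defining $\J_\eps \to \K_\eps$ remains a nontrivial double cover with smooth generic fibre. Once this compatibility between reduction and the invariant-theoretic constructions is in place, the combination of Hensel lifting and the tame Hilbert-symbol computation yields the theorem.
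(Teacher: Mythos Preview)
Your overall strategy matches the paper's: reduce the model mod~$v$, use good reduction of $\J_\eps \to \K_\eps$ to find residue-field points on which $\gamma$ and $x_1$ are units, Hensel-lift, and conclude via the tame Hilbert symbol. The paper is slightly less precise than you about why reduction behaves well (it simply asserts that the construction ``carries over to the residue field''), and it cites Lang--Weil where you invoke Lang's theorem together with the Weil bounds for abelian varieties; both are fine.

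There is, however, a genuine gap in your handling of small residue fields. You write that ``the exceptional small residue fields can be absorbed into $S_0$ at the outset, or handled by a direct check''. Neither is admissible here: the theorem must apply to every finite $v \nmid 2 f_6\,\operatorname{disc}(f)$, so you are not free to enlarge $S_0$, and there is no uniform direct check available since the relevant surfaces depend on $(G,H)$. The paper resolves this with a trick you should add: replace $k_v$ by an unramified extension of \emph{odd} degree. The Hilbert symbol $(a,g(P_v))_v$ takes values in $\F_2$, and restriction to an odd-degree extension is injective on $\Br(k_v)[2]$ (corestriction-restriction is multiplication by the degree), so the local contribution is unchanged. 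Meanwhile the residue field may now be taken as large as you like, and your point-count argument (Lang/Weil gives $\Omega(q^2)$ points on the reduction of $\J_\eps$, while $\{\overline\gamma=0\}$ and $\{\overline{x_1}=0\}$ each meet $\overline{\K_\eps}$ in $O(q)$ points) goes through for $q$ sufficiently large. With this one modification your proof is complete and essentially identical to the paper's.
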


\begin{proof}
  Replacing the local field $k_v$ by an unramified extension of odd
  degree we may assume that the order of the residue field ($q$ say)
  is arbitrarily large. Under the assumptions in the statement of the
  theorem, the construction of $\K_\eps \subset \PP^3$ and the double
  cover $\J_\eps \to \K_\eps$ from the model $(G,H)$ carries over to
  the residue field.  By the Lang-Weil estimates there are
  $\Omega(q^2)$ smooth points on the reduction of $\K_\eps$ mod $v$
  that lift to $\F_q$-points on the reduction of $\J_\eps$ mod $v$.
  The reduction of $\gamma$ mod $v$ cannot be identically zero on the
  surface (it is a quadratic form, whereas the surface has degree $4$)
  and so vanishes at $O(q)$ of these points. Since $q$ is large there
  are some points left over. Hensel lifting any one of these (and
  noting that for $v \nmid 2\infty$ the Hilbert symbol of two $v$-adic
  units is trivial) shows that the local contribution at $v$ is
  trivial.
\end{proof}

\subsection{First examples over $\Q$}
\label{sec:first-example}
Our first example was chosen as a genus $2$ curve with equation
$y^2 = f(x)$ where $f$ has small integer coefficients and Galois group
$S_6$, and a $2$-descent does not appear to give a sharp upper bound
for the rank of the Jacobian.

\begin{Example} \label{ex1}
Let $\C$ be the genus $2$ curve with equation
\[ y^2 = -3 x^6 + 3 x - 15. \] Let
$L = \Q[\theta] = \Q[x]/(x^6 - x + 5)$.  We represent elements of the
$2$-Selmer group $S^{(2)}(\J/\Q)$ as pairs
$(\xi,m) \in L^\times \times \Q^\times$ where $N_{L/\Q}(\xi) = m^2$.
Magma computes that $S^{(2)}(\J/\Q) \isom (\Z/2\Z)^3$ is generated by
\begin{align*}
c &=(1,-1), \\
\eps &= (\theta^5 - 3 \theta^4 - 2 \theta^3 + 5 \theta^2 + 4 \theta - 9, 1),\\
\eta &= (\theta^5 - 2 \theta^4 + 2 \theta^3 - \theta^2 + 1, 1).
\end{align*}
We find that the Selmer group elements $\eps$, $\eta$, $\eps + \eta$
are represented by models $(G,\frac{1}{2}H_1)$, $(G,\frac{1}{2}H_2)$,
$(G,\frac{1}{2}H_3)$ where $G = u_0 u_5 + u_1 u_4 + u_2 u_3$ and
\begin{align*}
  H_1 &= u_0 u_1 - 2 u_0 u_2 - u_1 u_2 + u_1 u_3 - 2 u_1 u_5 - 2 u_2 u_5
        - 2 u_3 u_4 - 2 u_4 u_5 + 2 u_5^2, \\
  H_2 &= u_0 u_2 - 2 u_0 u_3 - 2 u_1^2 - u_1 u_2 + u_2^2 - u_2 u_4
        + 3 u_2 u_5 + 2 u_3^2 + 2 u_3 u_5 - 4 u_4 u_5, \\
  H_3 &= -2 u_0 u_3 + u_1 u_2 + u_1 u_4 - 2 u_1 u_5 - u_2 u_3 + 2 u_2 u_4
        - u_3 u_4 + 4 u_4 u_5 + 2 u_5^2. 
\end{align*}
The identity element is represented by $(G,H)$ as given in
Section~\ref{sec:desing}, and the remaining four elements of
$S^{(2)}(\J/\Q)$ are given by reversing the order of the variables
$u_0, u_1, \ldots, u_5$.  (As noted in Remark~\ref{addc} this
corresponds to adding $c$.)  It may be checked using the formulae in
Section~\ref{sec:recover} that these models do indeed correspond to
the pairs $(\xi,m)$ we started with.

The procedure in Section~\ref{sec:tw-kum} shows that
$\K_\eps \subset \PP^3$ has equation $F_\eps = 0$ where
\begin{align*}
  F_{\eps} & = \, 2 x_1^3 x_2 + 4 x_1^3 x_3 + 4 x_1^3 x_4 - 18 x_1^2 x_2^2 +
  22 x_1^2 x_2 x_3 + 2 x_1^2 x_2 x_4 - 8 x_1^2 x_3^2 \\ & - 4 x_1^2 x_3 x_4 -
  2 x_1^2 x_4^2 + 6 x_1 x_2^3 + 6 x_1 x_2^2 x_3 + 7 x_1 x_2^2 x_4 - 6 x_1 x_2 x_3^2
  \\ & - 24 x_1 x_2 x_3 x_4 - 6 x_1 x_2 x_4^2 + 4 x_1 x_3^3 + 6 x_1 x_4^3 - x_2^4 +
  3 x_2^3 x_3 + 3 x_2^3 x_4 \\ & - 3 x_2^2 x_3 x_4 - 5 x_2^2 x_4^2 + 2 x_2 x_3^3 -
  2 x_2 x_3^2 x_4 + 2 x_2 x_4^3 - 2 x_3^2 x_4^2 + 2 x_3 x_4^3 + x_4^4.
\end{align*}
We likewise find equations for $\K_\eta \subset \PP^3$ and
$\K_{\eps + \eta} \subset \PP^3$. On $\K_\eta$ we find the rational
point $P = (1 : 0 : -1 : -1)$ whose inverse image in $\J_\eta$ is
defined over $\Q(\sqrt{-3})$. Since $\Gal(f) = S_6$, computing the
twisted $(2,2,2)$-form is simplified as described in
Remark~\ref{rem:S6}.  Taking $(c_1,c_2,c_3,c_4)=(1,0,0,0)$ in
Theorem~\ref{thm:getg} we find that $g = \gamma(x_1,\ldots,x_4)/x_1^2$
where $\gamma = 12 x_1 x_2 - 2 x_2^2 + 6 x_2 x_3 + 3 x_2 x_4$.
According to Theorem~\ref{thm:ctp-formula} the Cassels-Tate pairing is
given by
\[ \langle \eps,\eta \rangle_\CT = \sum_v (-3,\gamma(P_v))_v \] where
for each place $v$ of $\Q$ we choose a local point
$P_v \in \K_\eps(\Q_v)$ that lifts to $\J_\eps(\Q_v)$ and satisfies
$\gamma(P_v) \not= 0$. As before, our convention is that the Hilbert
symbol $(~,~)_v$ takes values in $\F_2$.

By Theorem~\ref{thm:badp} the only places that could contribute to the
pairing are the primes dividing the discriminant
$-2^8 \cdot 3^{10} \cdot 5^6 \cdot 7 \cdot 31 \cdot 43$, together with
$\infty$.  For each of these places $v$ we specify a local point
$P_v \in \K_\eps(\Q_v)$ whose first three coordinates are exact, and
whose last coordinate is either exact or given to sufficient precision
to determine the point uniquely.  It is important to note that we have
chosen local points that lift to $\J_\eps(\Q_v)$. The third column
lists $\gamma(P_v)$ as an element of $\Q_v^\times/(\Q_v^\times)^2$.
\[ \begin{array}{c|c|c|c}
     \text{ place } v & \text{ local point $P_v$ on $\K_\eps$ } &
    \gamma(P_v) & (-3,\gamma(P_v))_v \\ \hline
2 & (3 : 4 : 1 : 2^2 + O(2^3)) & -2 & 1 \\
3 & (1 : -1 : 0 : -3^2 + O(3^3)) & 1 & 0 \\
5 & (0 : 1 : 1 : 1) &    2 & 0 \\
7 & (0 : 1 : 0 : 1) &    1 & 0 \\
31 & (0 : 1 : 0 : 1) &   1 & 0 \\
43 & (0 : 1 : 0 : 1) &   1 & 0 \\
\infty &  (0 : 1 : 1 : 1) &   1 & 0
\end{array} \]
Adding up the entries in the right hand column shows that
$\langle \eps,\eta \rangle_{\CT} \not= 0$.

Since $\C$ has an even number of deficient places (these are $3$ and
$\infty$) we know by Lemma~\ref{lem:PS} that the Cassels-Tate pairing
on $S^{(2)}(\J/\Q)$ has even rank. Since we have shown that this
pairing is non-zero, this improves the upper bound
$\rank \, \J(\Q) \leqslant 3$ coming from $2$-descent to
$\rank \, \J(\Q) \leqslant 1$.

Further calculations show that the pairing is given by
\[ \begin{array}{c|ccc}
\langle ~,~ \rangle_{\CT} & c & \eps & \eta \\ \hline
c &     0 & 1 & 1 \\
\eps &  1 & 1 & 1 \\
  \eta &  1 & 1 & 1
\end{array} \]

Since the kernel of the pairing is $1$-dimensional, spanned by
$\eps + \eta$, this suggests we should look for rational points on
$\J_{\eps+\eta}$.  We find that the point $(1:-2:-2:0)$ on
$\K_{\eps+\eta}$ lifts to a $\Q$-point on $\J_{\eps+\eta}$. Using the
formulae for the covering map in Section~\ref{sec:tw-kum}, this point
maps to the point $( 124 : 238 : 199 : 3607 )$ on $\K$, which in turn
lifts to a point of infinite order in $\J(\Q)$, represented by a pair
of points on $\C$ with $x$-coordinates satisfying
$124 x^2 - 238 x + 199 = 0$. It follows that $\rank \, \J(\Q) = 1$ and
$\Sha(\J/\Q)[2] \isom (\Z/2\Z)^2$.  Our calculations further show that
the Cassels-Tate pairing on $\Sha(\J/\Q)[2]$ is non-degenerate, and
hence $\Sha(\J/\Q)[2^{\infty}] \isom (\Z/2\Z)^2$.
\end{Example}

Next we give an example where the rank of $\J(\Q)$ had previously only
been computed conditional on the Birch Swinnerton-Dyer conjecture. We
discuss further such examples in Sections~\ref{sec:bs}
and~\ref{sec:lmfdb}.

\begin{Example}
\label{ex:firstLMFDB}
Let $\C$ be the genus $2$ curve with equation
\[ y^2 = -3 x^6 - 4 x^5 - 10 x^4 - 51 x^2 + 80x - 28, \] and LMFDB
label {\tt 49471.a.49471.1}. Since the right hand side factors as the
product of two cubics, the canonical element of the $2$-Selmer group
is trivial. It follows by Lemma~\ref{lem:PS} that the Cassels-Tate
pairing on $S^{(2)}(\J/\Q) \isom (\Z/2\Z)^2$ is alternating. A
calculation similar to that in the previous example shows that the
pairing is non-zero.  Therefore $\rank \, \J(\Q) = 0$ and
$\Sha(\J/\Q)[2^{\infty}] \isom (\Z/2\Z)^2$.
\end{Example}

Our next example is taken from a paper of Logan and van Luijk
\cite{Logan-vanLuijk}.

\begin{Example} Let $\C$ be the genus $2$ curve with equation
  \[ y^2 = -6(x^2 + 1)(x^2 - 2 x - 1)(x^2 + x - 1).\] We have
  $\J(\Q)[2] \isom (\Z/2\Z)^2$.  We find that
  $S^{(2)}(\J/\Q) \isom (\Z/2\Z)^4$ is generated by
\begin{align*}
    c &= (1, -1), \\
    \eps &= (\theta^4 - \theta^3 - \theta^2 - 2 \theta - 2, 18), \\
    \eta &= (5 \theta^3 - 14 \theta^2 + 1, 900), \\
    \nu &= (\theta^4 + \theta^3 + 5 \theta^2 - 3, 270),
\end{align*}
and the Cassels-Tate pairing on $S^{(2)}(\J/\Q)$ is given by
\[ \begin{array}{c|cccc}
\langle ~,~ \rangle_{\CT} & c & \eps & \eta & \nu \\ \hline
c &      0 & 1 & 0 & 0 \\
\eps & 1 & 1 & 0 & 0 \\
\eta & 0 & 0 & 0 & 0 \\
\nu & 0 & 0 & 0 & 0
\end{array} \]
It follows that $\rank \, \J(\Q) = 0$
and $\Sha(\J/\Q)[2^{\infty}] \isom (\Z/2\Z)^2$.

We were able to compute the pairing in this example despite the result
of Logan and van Luijk \cite[Proposition~3.33]{Logan-vanLuijk} that
$\K_{\eps}(\Q) = \emptyset$.  This illustrates the point we made in
Remark~\ref{rem:searching}.  In fact, in this example we find that of
the $15$ twisted Kummer surfaces (associated to the non-zero elements
of $S^{(2)}(\J/\Q)$) all except $\K_{\eps}$ and
$\K_{\eps + c} \isom (\K_\eps)^\vee$ have rational points of small
height.
\end{Example}

We end this section with an example where the pairing has higher rank.

\begin{Example}
Let $\C$ be the genus $2$ curve with equation
\[ y^2 = f(x) = 3 x^6 + 10 x^5 - 16 x^4 + 18 x^3 + 23 x^2 - 54 x -
  17. \] We have $\Gal(f) = S_6$ and so the Jacobian $\J$ has no
rational $2$-torsion.  We find that $S^{(2)}(\J/\Q) \isom (\Z/2\Z)^5$
is generated by
\begin{align*}
  c &= (1, -1), \\
  \eps &=   (3 \theta^2 - 8 \theta + 10, 2041), \\
  \eta &=   (2 \theta^2 - 4 \theta + 15, 16657/3), \\
  \nu &=    (6 \theta^3 + 14 \theta^2 - 16 \theta - 31, 37503), \\
  \phi & =  (2 \theta^4 + 9 \theta^3 - 3 \theta^2 - 5 \theta + 1, 2925),
\end{align*}
and the Cassels-Tate pairing on $S^{(2)}(\J/\Q)$ is given by
the rank $3$ matrix
\[ \begin{array}{c|ccccc}
\langle ~,~ \rangle_{\CT} & c & \eps & \eta & \nu & \phi \\ \hline
c & 1 & 0 & 0 & 0 & 1 \\
\eps & 0 & 0 & 1 & 1 & 0 \\
\eta & 0 & 1 & 0 & 0 & 1 \\
\nu & 0 & 1 & 0 & 0 & 1 \\
\phi & 1 & 0 & 1 & 1 & 1 \\
   \end{array} \]
 The points $( 11 : -98 : 78 : -2217 )$ and $( 44 : 72 : 124 : -245 )$
 on the Kummer surface $\K$ lift to independent points of infinite
 order in $\J(\Q)$. It follows that $\rank \, \J(\Q) = 2$ and
 $\Sha(\J/\Q)[2^\infty] \isom (\Z/2\Z)^3$.
\end{Example}

\subsection{Examples from an experiment of Bruin and Stoll}
\label{sec:bs}
Bruin and Stoll conducted an experiment~\cite{BruinStoll} where they
attempted to determine the existence of rational points on all genus
$2$ curves $y^2 = f_6 x^6 + f_5 x^5 + \ldots + f_1 x + f_0$ where the
$f_i$ are integers with $|f_i| \leqslant 3$.  They reduced this to the
determination of the ranks of $47$ genus $2$ Jacobians, of which $36$
are expected to have rank $0$, a further $10$ are expected to have
rank $1$, and one is expected to have rank $2$.

We start by considering one of the cases where the rank is expected to
be $1$.
\begin{Example}
\label{ex-BS}
Let $\C$ be the genus $2$ curve with equation
\[ y^2 = -x^6 + 2x^5 + 3x^4 + 2x^3 - x - 3. \]
Magma computes that $S^{(2)}(\J/\Q) \isom (\Z/2\Z)^3$ is generated by
\begin{align*}
  c  &= (1, -1), \\
\eps &= (\theta^2 + \theta + 1, 4), \\
\eta &= (\theta^3 - 4 \theta^2 + 3 \theta - 1, 8).
\end{align*}
We find that the Selmer group elements $\eps$, $\eta$, $\eps+\eta$ are
represented by the models $(G,\frac{1}{2}H_1)$, $(G,\frac{1}{2}H_2)$,
$(G,\frac{1}{2}H_3)$, where $G = u_0 u_5 + u_1 u_4 + u_2 u_3$ and
\begin{align*}
  H_1 &=  u_0 u_2 - u_0 u_4 + 2 u_1 u_3 - 2 u_1 u_5
        + 2 u_2^2 + 2 u_2 u_3 + 4 u_3^2 - 10 u_3 u_5 - 2 u_4 u_5, \\
  H_2 &=  2 u_0^2 + 2 u_0 u_1 + 2 u_0 u_4 + 4 u_1 u_5
        + 2 u_2^2 + 2 u_2 u_3 + 4 u_2 u_5 - u_3 u_4 + 3 u_3 u_5, \\
  H_3 &=  u_0 u_4 + 2 u_1 u_2 - 2 u_1 u_3 + 4 u_1 u_4 - 2 u_2 u_3
        + 8 u_3 u_5 - u_4^2 - 6 u_4 u_5 - 2 u_5^2.
\end{align*}
Using these models we compute that the Cassels-Tate pairing is given by
\[ \begin{array}{c|ccc}
\langle ~,~ \rangle_{\CT} & c & \eps & \eta \\ \hline
   c  &  0  &  0  &  0 \\
 \eps &  0  &  0  &  1 \\
 \eta &  0  &  1  &  0
\end{array} \]
Since the point $( 9536 : -5312 : 5008 : 53113 )$ on $\K$ lifts to a point
of infinite order in $\J(\Q)$ it follows that $\rank \, \J(\Q) = 1$.
\end{Example}

\begin{Remark}
  \label{rem:vis}
  Let $d$ be a squarefree integer.  We write $\C_d$ for the quadratic
  twist of $\C$ by $d$, and $\J_d$ for its Jacobian. It is well known
  that
\begin{equation}
  \label{eqn:add}
  \rank \, \J(\Q(\sqrt{d})) = \rank \, \J(\Q) + \rank \, \J_d(\Q).
\end{equation}
Bruin (see \cite{BruinFlynn}, \cite{BruinStoll}) observed that the
upper bound for the rank of $\J(\Q)$ coming from $2$-descent can
sometimes be improved by carrying out a $2$-descent over
$\Q(\sqrt{d})$. Indeed, this gives an upper bound for the rank of
$\J(\Q(\sqrt{d}))$ which, when combined with~\eqref{eqn:add} and a
lower bound for the rank of $\J_d(\Q)$, gives an upper bound for the
rank of $\J(\Q)$. When this method succeeds it is because there are
elements in $\Sha(\J/\Q)[2]$ that are in the kernel of the restriction
map \[\Sha(\J/\Q) \to \Sha(\J/\Q(\sqrt{d})).\] One way to think about
this kernel is as the elements of $\Sha(\J/\Q)$ that are visible (in
the sense defined by Mazur) in the restriction of scalars of
$\J$. Accordingly Bruin calls his method {\em visibility}.
\end{Remark}

Bruin and Stoll \cite{BruinStoll} were already able to show that
$\rank \, \J(\Q) = 1$ in Example~\ref{ex-BS} by taking $d = -1$ in
Remark~\ref{rem:vis}. In fact this is just one of $5$ curves on their
list for which the same thing happens, that is, by computing the
Cassels-Tate pairing we are able to prove that $\rank \, \J(\Q) = 1$,
but Bruin and Stoll already proved this using visibility. Likewise the
conclusion $\rank \, \J(\Q) = 1$ in Example~\ref{ex1} can be proved by
taking $d = -3$ in Remark~\ref{rem:vis}.  However our method has the
advantage that we only had to carry out a $2$-descent over $\Q$
instead of $\Q(\sqrt{d})$, and so only had to verify a class group
calculation in a degree 6 rather than a degree 12 number field.

In the remaining $42$ examples we find that the Cassels-Tate pairing
on $S^{(2)}(\J/\Q)$ is identically zero. This is consistent with the
calculations of Bruin and Stoll, who showed in each case that
$\Sha(\J/\Q)$ has analytic order (approximately) $16$, and so is
expected to be isomorphic to $(\Z/4\Z)^2$. Visibility was not
sufficient for Bruin and Stoll to compute the ranks in these cases,
except in $4$ cases where they obtained results conditional on GRH.
However by combining visibility with computing the Cassels-Tate
pairing over a quadratic field $\Q(\sqrt{d})$, we are able to prove
that in all but $4$ of the $42$ examples the Jacobian does have the
expected rank. For this we took $d \in \{-1,\pm 2,\pm
3,5,-7\}$. In the unresolved cases the expected rank is~$0$, and
the problem is the apparent lack of a suitable small $d$.

We give further details for the final curve on their list.

\begin{Example}
Let $\C$ be the genus $2$ curve
\[ y^2 = f(x) = 3 x^6 + 3 x^5 + x^4 - 3 x^3 - 3 x^2 + x - 3. \] Since
$\Gal(f) = S_6$ there are no $2$-torsion points on $\J = \Jac(\C)$,
even over a quadratic extension.  It is easy to find two independent
$\Q$-points of infinite order on both $\J$ and its quadratic twist by
$-3$.  Let $k = \Q(\zeta)$ where $\zeta = (-1 + \sqrt{-3})/2$.  Magma
computes\footnote{This takes a few seconds assuming GRH, and still
  less than 10 minutes without.}  that
$S^{(2)}(\J/k) \isom (\Z/2\Z)^6$ is generated by
\begin{align*}
  \alpha_1 &= (1,-1), \\
  \alpha_2 &= (\theta^2 + \theta,1), \\
  \alpha_3 &= (3 \theta^4 - 3 \theta^2 + 1,169/3), \\
  \alpha_4 &= (3 \theta^5 + 3 \theta^4 + \theta^3 + 3 \theta^2 + 1,9), \\
  \eps &= (3(1 + \zeta) \theta^4 - 10 \theta^2
            - 8 \zeta \theta - 5 - 2 \zeta,  -1543 - 2462 \zeta), \\
  \eta &= (3(2 + \zeta) \theta^3 - 2 (2 + \zeta) \theta, -171),
\end{align*}
where as usual $\theta$ is a root of $f$. In fact
$S^{(2)}(\J/\Q) \isom (\Z/2\Z)^4$ is generated by
$\alpha_1, \ldots, \alpha_4$, but the Cassels-Tate pairing on this
Selmer group is trivial.  We represent the Selmer group elements
$\eps$, $\eta$, $\eps + \eta$ by the models $(G,\lambda^{-1} H_1)$,
$(G,\lambda^{-1} H_2)$, $(G,\lambda^{-1} H_3)$ where
$G = u_0 u_5 + u_1 u_4 + u_2 u_3$, $\lambda = 4(1-\zeta)$ and
\begin{align*}
  H_1 &= (2 + \zeta) u_0^2 - 2 (1 - \zeta) u_0 u_4
    - 2 (1 - \zeta) u_0 u_5 + 2 u_1^2 - 4 u_1 u_3 + 5 (1 + 2 \zeta) u_2^2 \\
  & + 6 (3 + 2 \zeta) u_2 u_4 + 4 (7 + 8 \zeta) u_2 u_5 - 2 u_3^2 + 8 u_3 u_4
    + 2 u_4^2 + 18 u_4 u_5 + (29 + 16 \zeta) u_5^2, \\
  H_2 &= 2 u_0 u_2 + 2 (1 - \zeta) u_0 u_3 - \zeta u_1^2
    - 2 (1 - \zeta) u_1 u_4 - 4 u_2^2 + 4 u_2 u_4 + 8 u_2 u_5 \\
  & - 4 (1 - \zeta) u_3 u_4 - 8 (1 - \zeta) u_3 u_5 - u_4^2
  + 8 (1 + 3 \zeta) u_4 u_5 - 16 u_5^2, \\
  H_3 &= 4 u_0^2 + 8 u_0 u_1 + 4 u_0 u_2 - 4 \zeta u_0 u_3 - 8 \zeta u_0 u_4
  + (3 - \zeta) u_1^2 + 4 (1 + \zeta) u_1 u_2 \\
  & - 2 (1 - \zeta) u_1 u_4 + 2 \zeta u_2^2 + 2 (1 + 2 \zeta) u_2 u_5
  - 2 (1 - 2 \zeta) u_3^2 - 4 u_3 u_4 + 2 (2 + \zeta) u_3 u_5 \\
  &  - (1 - 3 \zeta) u_4^2.
\end{align*}
On $\K_\eta$ we find the point
$( \zeta : 1 : 2 + 2 \zeta : 2 - 4 \zeta )$.  This lifts to a point on
$\J_\eta$ defined over $k(\sqrt{a})$ where $a = 7 + 4 \zeta$. Using
Theorem~\ref{thm:getg} with $(c_1, \ldots,c_4) = (1,0,0,0)$ we compute
that $g = \gamma(x_1, \ldots,x_4)/x_1^2$ where
\begin{align*}
  \gamma &= -(233 + 446 \zeta) x_1^2 - (276 - 308 \zeta) x_1 x_2
   + ( 178 - 316 \zeta) x_1 x_3 \\ & + ( 118 + 260 \zeta) x_1 x_4
   + (228 - 196 \zeta) x_2^2 - (4 + 188 \zeta) x_2 x_3
\\ &  + (84 - 300 \zeta) x_2 x_4 + (227 + 34 \zeta) x_3^2 - (6 - 36 \zeta)
      x_3 x_4 - ( 17 + 82 \zeta) x_4^2.
\end{align*}
According to Theorem~\ref{thm:ctp-formula} the Cassels-Tate pairing is
given by
\[ \langle \eps,\eta \rangle_\CT = \sum_v (a,\gamma(P_v))_v \] where
for each place $v$ of $k$ we choose a local point
$P_v \in \K_\eps(k_v)$ that lifts to $\J_\eps(k_v)$ and satisfies
$\gamma(P_v) \not= 0$.

Since $\C$ has discriminant
$2^8 \cdot 3^4 \cdot 13 \cdot 17 \cdot 89 \cdot 6229$, the norm of $a$
is $37$, the coefficients of $\gamma$ generate the unit ideal in
$\Z[\zeta]$, and $k$ has no real places, it follows by
Theorem~\ref{thm:badp} that the only places that could contribute to
the pairing are the primes dividing $2,3,13,17,37,89$ and $6229$.  For
each of these primes $\pp$, except $(2)$, it is easy to find a smooth
point on the reduction of $\K_{\eps}$ mod $\pp$ where the pushout form
defining the double cover $\J_{\eps} \to \K_{\eps}$ takes a non-zero
square value, and $\gamma$ takes a non-zero value. Moreover if $\pp$
is the prime dividing $a$, then $\gamma$ takes a non-zero square
value.  Taking as our local point any Hensel lift of this mod $\pp$
point shows that these primes make no contribution to the pairing.  It
remains to compute the contribution at the prime $(2)$.  In this case
we choose the local point
\[ P_2 = ( 1 : 0 : 1 : 2^2 + (1 + \zeta) 2^3 + O(2^5)) \in
  \K_\eps(k_2) \] which lifts to $\J_\eps(k_2)$, and satisfies
$\gamma(P_2) \equiv 2^2 (-1 + 2 \zeta) \pmod{2^5}$. Since the Hilbert
symbol $(a,-1 + 2 \zeta)_2$ is non-trivial it follows that
$\langle \eps,\eta \rangle_\CT \not = 0$.

Since $\C$ has no deficient places, we know by Lemma~\ref{lem:PS} that
the Cassels-Tate pairing on $S^{(2)}(\J/k)$ has even rank.  In view of
the points of infinite order we found earlier, and~\eqref{eqn:add}, it
follows that $\rank \, \J(k) = 4$ and $\rank \, \J(\Q) = 2$.
\end{Example}

\subsection{Examples from the LMFDB}
\label{sec:lmfdb}
The {\em $L$-functions and modular forms database} \cite{lmfdb}
contains a database of genus $2$ curves defined over $\Q$, as
described in the accompanying paper~\cite{lmfdb-paper}. The database
(accessed May 2023) contains $66,158$ genus $2$ curves in $65,534$
isogeny classes, and contains all known genus $2$ curves with absolute
discriminant at most $10^6$.

Amongst the data listed (for each genus $2$ curve $\C$ with Jacobian
$\J$) is the analytic order of the Tate--Shafarevich group
$\Sha(\J/\Q)$.  We restrict attention to the $4161$ curves where this
order is even.  In all $4161$ cases we were able to compute the
Cassels-Tate pairing on $S^{(2)}(\J/\Q)$. In particular the fact that
our method depends on finding rational points on certain twisted
Kummer surfaces was not a problem in practice.

In $4088$ of the $4161$ cases we find that the Cassels-Tate pairing on
$\Sha(\J/\Q)[2]$ is non-degenerate, and hence a $2$-descent followed
by our methods are sufficient to compute the rank of $\J(\Q)$
unconditionally. In such examples it also follows that
$\Sha(\J/\Q)[2^\infty]$ is $2$-torsion.  These examples, broken down
by $r = \rank \, \J(\Q)$ and $s= \dim_{\F_2} \Sha(\J/\Q)[2]$, were
distributed as follows.
\[ \begin{array}{c|cccc|c}
  &s=1&s=2&s=3&s=4& {\rm total}\\ \hline
r=0&   1125 & 1387  & 38 &   9  &  2559 \\
r=1&   1004 &  406  &  7  &  2  &  1419 \\
r=2&   106  &   3  &  0 &   0  &  109 \\
r=3&      1  &  0  &  0  &  0  &  1 \\ \hline
  {\rm total} &  2236 & 1796 & 45 & 11 & 4088
   \end{array} \]
 We recall (see Lemma~\ref{lem:PS}) that a place $v$ is deficient
 if $\C$ has no $\Q_v$-rational divisor of degree~$1$. The same examples,
 broken down by the number $t$ of deficient places and
 $s= \dim_{\F_2} \Sha(\J/\Q)[2]$, were distributed as follows.
\[ \begin{array}{c|cccc|c}
  &s=1&s=2&s=3&s=4& {\rm total}\\ \hline
t = 0 &  0   & 1782 &   0 &  10 & 1792 \\
t = 1 & 2232 &    0 &  45 &   0 & 2277 \\
t = 2 & 0    &   14 &   0 &   1 & 15 \\
t = 3 & 4    &    0 &   0 &   0 & 4 \\ \hline
 {\rm total} & 2236 & 1796 & 45 & 11 & 4088  
\end{array} \] 
This is in accordance with the result of Poonen and Stoll (see
Lemma~\ref{lem:PS}) that the rank of the pairing has the same parity
as the number of deficient places. The pairing was alternating
(equivalently, the canonical element $c$ was in the kernel of the
pairing) in $486$ cases with $s=2$, and $3$ cases with $s=4$.

\medskip


We now compute the rank of $\J(\Q)$ in the remaining $4161 -4088 = 73$
cases.  We divide these curves into lists $A$, $B$, $C$ and $D$
according to the method we use.  These lists contain $17$, $30$, $15$
and $11$ curves respectively.

The Jacobians in list $A$ are isogenous over $\Q$ to a product of
elliptic curves.  Since the elliptic curves all have small conductor
(at most $25840$), it is easy to compute the rank of $\J(\Q)$ (it is
either $0$ or $1$) as the sum of the ranks of the elliptic curves.  In
contrast the Jacobians in lists $B$, $C$ and $D$ are absolutely
simple. In these cases it is expected that $\rank \, \J(\Q) = 0$.  The
Birch Swinnerton-Dyer conjecture (together with a $2$-descent and
Lemma~\ref{lem:PS}) further predicts that
$\Sha(\J/\Q) \isom (\Z/4\Z)^2$ or $(\Z/8\Z)^2$, the latter only for
the curve with LMFDB label {\tt 108737.a.108737.1} in list $C$.

We say that genus $2$ curves are isogenous if their Jacobians are
isogenous. For each curve in list $B$ we found an isogenous curve
whose Jacobian has no elements of order $2$ in its Tate-Shafarevich
group.  A $2$-descent was therefore sufficient to prove that the rank
is $0$.  For each curve in list $C$ we likewise found an isogenous
curve whose Jacobian has no elements of order $4$ in its
Tate-Shafarevich group.  Computing the Cassels-Tate pairing on the
$2$-Selmer group was therefore sufficient to prove that the rank is
$0$.

In processing lists $B$ and $C$, we found the isogenous curves using
the Magma function {\tt TwoPowerIsogenies}.  In all but two cases
({\tt 111989.a.111989.2} and {\tt 276083.a.276083.2}, both in list
$C$) the isogenous curve we used is {\em not} listed in the LMFDB,
since it has absolute discriminant greater than $10^6$.  The isogenies
that are relevant here are typically ``double Richelot
isogenies''. These are isogenies defined over $\Q$ that are the
composite of two Richelot isogenies defined over a cubic number field.

There are $11$ curves in list $D$, in $10$ isogeny classes. We were
able to prove that the rank is $0$ in each of these cases using our
method from Section~\ref{sec:bs}, that is, we combine visibility with
computing the Cassels-Tate pairing over a quadratic field
$\Q(\sqrt{d})$.  We used the following values of $d$, ignoring the
first curve since it is isogenous to the second (in fact via a double
Richelot isogeny).
\[\begin{tabular}{cc|cc|cc}
    LMFDB label & $d$ & LMFDB label & $d$ & LMFDB label & $d$ \\ \hline
 {\tt 65563.d.65563.1}   &&  {\tt 300429.a.300429.1} &$-3$& {\tt 876096.a.876096.1} &$-3$\\
 {\tt 65563.d.65563.2}   &$-7$&  {\tt 438837.b.438837.1} &$-3$& {\tt 881669.a.881669.1} &$5$\\
 {\tt 106015.b.742105.1} &$-3$&  {\tt 681797.a.681797.1} &$-3$& {\tt 913071.a.913071.1} &$-3$\\
 {\tt 270438.a.270438.1} &$-3$& {\tt 711917.a.711917.1}  &$-3$&
  \end{tabular}\]

This completes the unconditional determination of the ranks of all
genus $2$ Jacobians in the LMFDB.

\begin{Remark}
  Prior to our work, the LMFDB
  listed\footnote{{\url{https://www.lmfdb.org/api/g2c_curves/?mw_rank_proved=0}}}
  $69$ genus~$2$ curves for which the rank of the Jacobian was only
  known conditionally.  In $45$ of these cases (the first being that
  in Example~\ref{ex:firstLMFDB}) we have
  $\Sha(\J/\Q)[2^\infty] \isom (\Z/2\Z)^2$.  These are therefore
  included in the $4088$ cases analysed above.  These are all cases
  where the canonical element of $S^{(2)}(\J/\Q)$ is trivial, there
  are no deficient places, and $\rank \, \J(\Q) = 0$ or $1$ (the
  latter only for {\tt 776185.a.776185.1}).
The remaining $24$ cases comprise all but $2$ curves in list $C$, and
all curves in list $D$. There are no curves in lists $A$ and $B$ since
our methods in these cases are already used by Stoll's Magma function
{\tt MordellWeilGroupGenus2}. The two curves in list $C$ that were
already resolved are {\tt 38267.a.38267.1} and {\tt
  523925.a.523925.1}. These are isogenous to curves where
Lemma~\ref{lem:PS} and Remark~\ref{rem:creutz} already give sufficient
information about the pairing to conclude.
\end{Remark}


\begin{thebibliography}{MM}

\frenchspacing
\renewcommand{\baselinestretch}{1}

\bibitem[BG]{BG}
M.~Bhargava and B.H.~Gross,  
The average size of the 2-Selmer group of Jacobians of
hyperelliptic curves having a rational Weierstrass point,
in {\em Automorphic representations and $L$-functions},
D.~Prasad, C.S.~Rajan, A.~Sankaranarayanan and J.~Sengupta (eds),
23--91,
Tata Inst. Fundam. Res. Stud. Math., {\bf 22}, Tata Inst. Fund. Res.,
Mumbai, 2013.

\bibitem[BL]{BirkenhakeLange}
C. Birkenhake and H. Lange,  
{\em Complex abelian varieties}, Second edition,
Grundlehren der mathematischen Wissenschaften,
{\bf 302}, Springer-Verlag, Berlin, 2004.

\bibitem[BSSVY]{lmfdb-paper}
A.R.~Booker, J.~Sijsling, A.V.~Sutherland, J.~Voight, and D.~Yasaki,
A database of genus 2 curves over the rational numbers,
Twelfth Algorithmic Number Theory Symposium (ANTS XII),
{\em LMS Journal of Computation and Mathematics} {\bf{19}} (2016), 235--254.

\bibitem[BCP]{magma}
W. Bosma, J. Cannon and C. Playoust, 
The Magma algebra system I: The user language, {\em J. Symb. Comb.} {\bf{24}}, 
235-265 (1997), \url{http://magma.maths.usyd.edu.au/magma/} 

\bibitem[BF]{BruinFlynn}
N. Bruin and E.V. Flynn,  
Exhibiting SHA[2] on hyperelliptic Jacobians,
{\em J. Number Theory} {\bf 118} (2006), no. 2, 266--291.

\bibitem[BS]{BruinStoll}
N. Bruin and M. Stoll,  
Deciding existence of rational points on curves: an experiment,
{\em Experiment. Math.} {\bf 17} (2008), no. 2, 181--189.

\bibitem[C1]{CasselsIII}
J.W.S. Cassels,
Arithmetic on curves of genus 1, III. The Tate-Šafarevič and Selmer groups,
{\em Proc. London Math. Soc.} (3) {\bf 12} (1962), 259--296.

\bibitem[C2]{CasselsIV}
J.W.S. Cassels,  
Arithmetic on curves of genus 1,
IV. Proof of the Hauptvermutung.
{\em J. reine angew. Math.} {\bf 211} (1962), 95--112.

\bibitem[C3]{Cassels-RCF}
J.W.S. Cassels,
{\em Rational quadratic forms},
London Mathematical Society Monographs, {\bf{13}}, Academic Press,
London-New York, 1978.

\bibitem[C4]{Cassels-G2}
J.W.S. Cassels,
The Mordell-Weil group of curves of genus 2, in
{\em Arithmetic and geometry, Vol. I}, M.~Artin and J~Tate (eds), 27--60,
Progr. Math., 35, Birkhäuser, Boston, Mass., 1983.

\bibitem[C5]{Ca98}
J.W.S. Cassels, 
Second descents for elliptic curves,
{\em J. reine angew. Math.} {\bf 494} (1998), 101--127.

\bibitem[CF]{CF}
J.W.S. Cassels and E.V. Flynn,  
{\em Prolegomena to a middlebrow arithmetic of curves of genus 2},
London Mathematical Society Lecture Note Series, {\bf 230},
Cambridge University Press, Cambridge, 1996.

\bibitem[Cl]{ob}
P.L. Clark,
{\em The period-index problem in WC-groups II: abelian varieties},
preprint 2004, \url{arXiv:math/0406135 [math.NT]}
    
\bibitem[CFS]{minred234}
J.E. Cremona, T.A. Fisher and M. Stoll,
Minimisation and reduction of 2-, 3- and 4-coverings of elliptic curves,
{\em Algebra \& Number Theory} {\bf 4} (2010), no. 6, 763--820.

\bibitem[Cr]{Creutz}
B. Creutz, 
Improved rank bounds from 2-descent on hyperelliptic Jacobians,
{\em Int. J. Number Theory} {\bf 14} (2018), no. 6, 1709--1713.

\bibitem[D]{Donagi}
R. Donagi, Group law on the intersection of two quadrics,
{\em Ann. Scuola Norm. Sup. Pisa Cl. Sci. (4)} {\bf 7} (1980),
no. 2, 217--239.

\bibitem[Do]{Donnelly}
S. Donnelly, {\em Algorithms for the Cassels-Tate pairing}, preprint, 2015.

\bibitem[Fi1]{testeqtc}
T.A. Fisher, Testing equivalence of ternary cubics, in {\em
Algorithmic number theory}, 333–345, Lecture Notes in
Comput. Sci., {\bf 4076}, Springer, Berlin, 2006.

\bibitem[Fi2]{bq-ctp}
T.A. Fisher,
On binary quartics and the Cassels-Tate pairing,
{\em Res. Number Theory} {\bf 8} (2022), no. 4, Paper No. 74, 13 pp.

\bibitem[FL]{FL}
T.A. Fisher and M. Liu,
{\em Minimisation of $2$-coverings of genus $2$ Jacobians},
in preparation.

\bibitem[Fl]{Flach}
M. Flach,  
A generalisation of the Cassels-Tate pairing,
{\em J. reine angew. Math.} {\bf 412} (1990), 113--127.

\bibitem[F1]{72}
E.V. Flynn,  
The Jacobian and formal group of a curve of genus 2 over an
arbitrary ground field,
{\em Math. Proc. Cambridge Philos. Soc.} {\bf 107} (1990), no. 3, 425--441.

\bibitem[F2]{biquadratic}
E.V. Flynn,  
The group law on the Jacobian of a curve of genus 2,
{\em J. reine angew. Math.} {\bf 439} (1993), 45--69.

\bibitem[FPS]{FPS}
E.V. Flynn, B. Poonen and E.F. Schaefer,
Cycles of quadratic polynomials and rational points on a genus-2 curve,
{\em Duke Math. J.} {\bf 90} (1997), no. 3, 435--463.

\bibitem[FTvL]{2-coverings}
E.V. Flynn, D. Testa and R. van Luijk,  
Two-coverings of Jacobians of curves of genus~2,
{\em Proc. Lond. Math. Soc.} (3) {\bf 104} (2012), no. 2, 387--429.

\bibitem[FH]{FH}
W. Fulton and J. Harris,  
{\em Representation theory,
  A first course}, Graduate Texts in Mathematics, {\bf 129},
Springer-Verlag, New York, 1991.

\bibitem[GG]{GG}
D.M. Gordon and D. Grant,
Computing the Mordell-Weil rank of Jacobians of curves of genus two,
{\em Trans. Amer. Math. Soc.} {\bf 337} (1993), no. 2, 807--824.

\bibitem[HS]{HS}
Y.~Harpaz and A.N.~Skorobogatov,  
Hasse principle for Kummer varieties,
{\em Algebra \& Number Theory} {\bf 10} (2016), no. 4, 813--841.

\bibitem[LMFDB]{lmfdb}
The LMFDB Collaboration, The $L$-functions and Modular Forms Database,
(online; accessed May 2023), \url{https://www.lmfdb.org/}

\bibitem[L]{Lichtenbaum-dualitythm}
S. Lichtenbaum,  
Duality theorems for curves over $p$-adic fields,
{\em Invent. Math.} {\bf 7} (1969), 120--136.

\bibitem[LvL]{Logan-vanLuijk}
A. Logan and R. van Luijk,  
Nontrivial elements of Sha explained through K3 surfaces,
{\em Math. Comp.} {\bf{78}} (2009), no. 265, 441--483.

\bibitem[Mi]{Milne}
J.S. Milne, {\em Arithmetic duality theorems},
Second edition, BookSurge, LLC, Charleston, SC, 2006.

\bibitem[Mo]{Morgan}
A. Morgan,
{\em Hasse principle for Kummer varieties in the case of generic 2-torsion},
in preparation.
  
\bibitem[NR]{NR}
M.S. Narasimhan and S. Ramanan,
Moduli of vector bundles on a compact Riemann surface,
{\em Ann. of Math.} (2) {\bf 89} (1969), 14--51. 

\bibitem[N]{Newstead}
P.E. Newstead,
Stable bundles of rank 2 and odd degree over a curve of genus 2,
{\em Topology} {\bf 7} (1968), 205--215.

\bibitem[PSc]{PoonenSchaefer}
B. Poonen and E.F. Schaefer,
Explicit descent for Jacobians of cyclic covers of the projective line,
{\em J. reine angew. Math.} {\bf 488} (1997), 141--188.

\bibitem[PSt]{PS}
B. Poonen and M. Stoll,  
The Cassels-Tate pairing on polarized abelian varieties,
{\em Ann. of Math.} (2) {\bf 150} (1999), no. 3, 1109--1149.

\bibitem[R]{Reid}
M. Reid, {\em The complete intersection of two or more quadrics},
PhD thesis, University of Cambridge, 1972.

\bibitem[Se]{Serre-LF}
J.-P. Serre,
{\em Local fields}, Graduate Texts in Mathematics, {\bf 67},
Springer-Verlag, New York-Berlin, 1979.

\bibitem[SW]{SW}
A. Shankar and X. Wang,
Rational points on hyperelliptic curves having a marked
non-Weierstrass point,
{\em Compos. Math.} {\bf 154} (2018), no. 1, 188--222.

\bibitem[Si]{Simon}
D. Simon,
{\em Quadratic equations in dimensions 4, 5 and more},
preprint, 2005, \url{http://www.math.unicaen.fr/~simon/}

\bibitem[St]{Stoll}
M. Stoll,
Implementing 2-descent for Jacobians of hyperelliptic curves,
{\em Acta Arith.} {\bf 98} (2001), no. 3, 245--277.

\bibitem[SvL]{unfaking}
M. Stoll and R. van Luijk,  
Explicit Selmer groups for cyclic covers of $\PP^1$,
{\em Acta Arith.} {\bf 159} (2013), no. 2, 133--148.

\bibitem[Sw]{brief}
H.P.F. Swinnerton-Dyer,  
{\em A brief guide to algebraic number theory},
London Mathematical Society Student Texts, {\bf 50},
Cambridge University Press, Cambridge, 2001.

\bibitem[Ta]{Tate}
J. Tate,  
Duality theorems in Galois cohomology over number fields,
{\em Proc. Internat. Congr. Mathematicians (Stockholm, 1962)}
pp. 288--295, Inst. Mittag-Leffler, Djursholm, 1963.

\bibitem[T1]{Thorne}
J.A. Thorne,  
A remark on the arithmetic invariant theory of hyperelliptic curves,
{\em Math. Res. Lett.} {\bf 21} (2014), no. 6, 1451--1464.

\bibitem[T2]{thorne-red}
J.A. Thorne,  
{\em Reduction theory for stably graded Lie algebras}, in preparation.

\bibitem[Wa]{Wang}
X. Wang, 
Maximal linear spaces contained in the based loci of pencils of quadrics,
{\em Algebr. Geom.} {\bf 5} (2018), no. 3, 359--397.

\bibitem[We]{Weil-Hermite}
A. Weil,  
Remarques sur un mémoire d'Hermite,
{\em Arch. Math. (Basel)} {\bf 5} (1954), 197--202.

\bibitem[Y1]{JialiThesis}
J. Yan,
{\em Computing the Cassels-Tate pairing for Jacobian varieties
of genus two curves}, PhD thesis, University of Cambridge, 2021,
\url{https://doi.org/10.17863/CAM.72729}

\bibitem[Y2]{Jiali-rat2tors}
J. Yan,
{\em Computing the Cassels-Tate pairing for genus two Jacobians with
 rational two torsion points},
preprint, 2021,
\url{https://arxiv.org/abs/2109.08258}

\end{thebibliography}
\end{document}